\documentclass{article}
\usepackage{authblk}
\usepackage{lipsum}
\usepackage[latin1]{inputenc}
\usepackage{hyperref}
\usepackage[english]{babel} 
\usepackage{amssymb,amsmath,amsthm,enumerate}
\usepackage{vmargin}
\usepackage{dcolumn}
\usepackage{appendix}
\usepackage{color}
\usepackage{bm}
\usepackage{bbm}
\usepackage{enumitem}
\usepackage{amssymb}
\usepackage{amsthm}
\usepackage{array}
\usepackage[usenames,dvipsnames]{xcolor}
\usepackage{graphicx}
\usepackage{tcolorbox}
\usepackage{amsfonts}
\usepackage{commath}
\usepackage{mathtools}
\usepackage{amssymb}
\usepackage{multirow}
\usepackage{color}
\setmargrb{2.2 cm}{1.7 cm}{2.23 cm}{3.1 cm}
\usepackage{amsmath}
\usepackage[normalem]{ulem}
\usepackage{todonotes}

\newtheorem{theorem}{Theorem}[section]
\newtheorem{definition}[theorem]{Definition}

\newtheorem{proposition}[theorem]{Proposition}
\newtheorem{lemma}[theorem]{Lemma}

\usepackage[useregional]{datetime2}

\newcommand{\bb}{\mathcal{B}}

\usepackage{titlesec}
\titleformat{\subsection}[runin]
  {\normalfont\large\bfseries}{\thesubsection}{1em}{}

   \allowdisplaybreaks
   \begin{document}

 \title{Stationary Boltzmann Equation for Polyatomic Gases in  a slab}
\author[1]{Ki Nam Hong}
\author[2]{Marwa Shahine}
\author[3]{Seok-Bae Yun}
\affil[1,3]{Department of Mathematics, Sungkyunkwan University}
\affil[2]{Department of Mechanical Engineering, Eindhoven University of Technology}

          %For each author, make a block with the following macros:

          %\author { Ki Nam Hong, Marwa Shahine, and Seok-Bae Yun }

         \pagestyle{myheadings} %\markboth{COMPACTNESS PROPERTY OF THE LINEARIZED BOLTZMANN OPERATOR}{ST\' EPHANE BRULL,  MARWA SHAHINE, AND PHILIPPE THIEULLN} 

\date{}   

\maketitle

 \begin{abstract}
We consider the existence of steady rarefied flows of polyatomic gas between two parallel condensed phases, where evaporation and condensation processes occur. To this end, we study the existence problem of stationary solutions in a one-dimensional slab for the polyatomic Boltzmann equation, which takes into account the effect of internal energy in the collision process of the gas molecules.
We show that, under suitable norm bound assumptions on the boundary condition functions, there exists a unique mild solution to the stationary polyatomic Boltzmann equation when the slab is sufficiently small.
This is based on various norm estimates - singular estimates, hyperplane estimates - of the collision operator, for which genuinely polyatomic techniques must be employed. For example, in the weighted and singular estimates of the collision operator, we carry out integration with respect to the parameter describing the internal-translational energy distribution, which provides a regularizing effect in the estimate.
% In this paper, we prove the existence and uniqueness of solutions of the stationary Boltzmann equation of polyatomic gases, with a hard potential type collision cross-section. The proof is carried through finding a certain suitable functional space and a suitable norm such that the fixed point theorem can be applied. For the fixed point theorem to hold, certain norm estimates on the collision operator are proved.
 \end{abstract}
\tableofcontents

\section{Introduction}
\subsection{Stationary Boltzmann equation for polyatomic molecules}
The Boltzmann equation is the fundamental model to describe gaseous system at mesoscopic level by
describing the time evolution of the velocity distribution of underlying particles. The original 
Boltzmann equation was derived under the assumption that the molecules are monatomic. This enabled
one to obtain the exact formulation of the pre-collision and post-collision velocities of the molecules
and express the collision operator in an explicit form. However, except for a few exceptions such as helium and
neon, most of the gases in nature are polyatomic. In this regard, Bourgat et al \cite{9}, introduced a polyatomic Boltzmann model by introducing a new variable $I$, which takes into account the intermolecular energy formed due to the rotation and vibration of molecules. In their model, the post-collisional velocities and internal energies of the molecules are expressed in terms of the pre-collisional ones using the parametrization of Borgnakke and Larsen \cite{8}.

In this paper, we study the stationary polyatomic flow arising between two parallel walls on which the evaporation and condensation process occurs, which has been an active topic of research in physics and engineering \cite{aoki2001behavior,Funagane2011,Shigeru,KOSUGE187,a1994,sss,sonedoi,aoki1998,yoshida,kosugex,sone1,sone2,BODIES,boby1996,soneonishi,vladimir,TKT,takata2001,takata1999vapor}.

Mathematically, the problem can be written as the global existence problem for the steady Boltzmann equation describing a single polyatomic gas in an interval of length $\varepsilon$:

\begin{equation}\label{bebc0}
\begin{aligned}
&\hspace{1.8cm}v_{1} \frac{\partial f}{\partial x} = Q(f, f), \qquad x \in[0,\varepsilon], \,\, v\in \mathbb{R}^3 ,\\
&f(0, v,I) =f_L(v,I) , \quad (v_{1}>0), \quad
f(1, v,I) =  f_R( v,I) , \quad (v_{1}<0),
\end{aligned}
\end{equation}
The polyatomic velocity distribution function $f(t,x,v,I)$ is the number density function on position $x\in [0,\varepsilon]$ moving with a translational velocity $v\in\mathbb{R}^3$ with an inter-molecular energy $I \geq 0$ at time $t\geq0$.
$f_L$ and $f_R$ denote the inflow boundary conditions.
The collision operator introduced in \cite{9} takes the following form 
\begin{equation}\label{qff}
	\begin{aligned}
		Q(f, f)(v, I)=\int_{A}\mathcal{B}(v,v_*,I,I_*,r,R,\sigma) \left\{\frac{f(x,v^{\prime},I^{\prime})f(x,v^{\prime}_*,I^{\prime}_*)}{\left(I^{\prime} I_{*}^{\prime}\right)^{\alpha}}-\frac{f(x,v,I) f(x,v_*,I_*)}{\left(I I_{*}\right)^{\alpha}}\right\}\, dB,
	\end{aligned}
\end{equation}
where the domain of integration $A$ is
\[
A= \mathbb{S}^2_{\sigma}\times[0,1]_R\times [0,1]_r \times\{\mathbb{R}\geq0\}_{I_*}\times\mathbb{R}^{3}_{v_*}
\]
where measure $dB$ is defined by
\[
dB=(r(1-r))^{\alpha}(1-R)^{2\alpha+1} R^{\frac{1}{2}} I^{\alpha} I_{*}^{\alpha} \,dR dr d\sigma dI_{*} dv_{*}.
\]
The exponent $\alpha\geq{0}$ is a parameter related to the degrees of freedom of the gas. 
It follows from the consideration of the conservation laws of the momentum and the total energy that  the post-collisional velocity $(v^{\prime},v^{\prime}_*)$ and post-collisional inter-molecular energy $(I^{\prime},I^{\prime}_*)$ are expressed in terms of the pre-collisional velocity $(v,v_*)$ and pre-collisional inter-molecular energy $(I,I_*)$ 
\begin{equation}\label{pre-post velocity}
	\begin{aligned}
		v'=\frac{v+v_*}{2}+\sqrt{{RE}}\,\,\sigma,\qquad v'_*=\frac{v+v_*}{2}-\sqrt{RE}\,\,\sigma,
	\end{aligned}
\end{equation}
and
\begin{equation*}\label{pre-post I}
	\begin{aligned}
		%I'=\frac{r(1-R)}{r'(1-R')}I, 
        I^{\prime}=r(1-R)E,
		\qquad
		%I'_*=\frac{(1-r)(1-R)}{(1-r')(1-R')}I_*,
        I^{\prime}_*=(1-r)(1-R)E,
	\end{aligned}
\end{equation*}
assuming the molecular mass to be unity. Here $E$ denotes 
total energy of the  molecule in the center of mass reference frame
\begin{equation*}
	E\equiv \frac{1}{4}(v-v_*)^2+I+I_*=\frac{1}{4}(v^{\prime}-v^{\prime}_*)^{2}+I'+I'_*,
\end{equation*}
and the parameter  $R\in[0,1]$ represents the portion allocated to the kinetic energy after collision out of the total energy, and the parameter $r\in[0,1]$ represents the distribution of the post internal energy among the two interacting molecules.
The parameters $r'$ and $R'\in[0,1]$ are defined in the same manner as $r$ and $R$ for the reverse process.

Throughout this paper, we assume that the collision cross-section $\mathcal{B}$ is given by
so-called the ``total energy model" \cite{milana}: 
\begin{equation}\label{eqm}
			\mathcal{B}\left(v, v_{*}, I, I_{*}, r, R,\sigma\right)=E^{\frac{\gamma}{2}},\end{equation}
which is a direct extension of the hard-potential collision kernels of monatomic gases. Note that \eqref{eqm} satisfies the microreversibility conditions
	\begin{equation}\label{reversibility}
		\begin{aligned}
			\bb(v,v_*,I,I_*,r,R,\sigma)&=\bb(v_*,v,I_*,I,1-r,R,-\sigma),\\
			\bb(v,v_*,I,I_*,r,R,\sigma)&=\bb(v',v'_*,I',I'_*,r',R',\sigma),
		\end{aligned}
	\end{equation}
with the following upper and lower bounds introduced in \cite{milana}
	\begin{equation*}
	    \frac{1}{2^{\frac{\gamma}{2}+1}}\;\Big({|v-v_*|}^{{\gamma}}+(I+I_*)^{\frac{\gamma}{2}}\Big)\leq \bb\leq \Big({|v-v_*|}^{{\gamma}}+(I+I_*)^{\frac{\gamma}{2}}\Big)
	\end{equation*}
where $0\leq\gamma\leq1$. 
We mention that several other models for the collision cross section is available, which we listed in the Appendix for reader's convenience.\\

\noindent{\bf $\bullet$ Reformulation of the Problem:}
Upon rescaling, we rewrite the stationary problem posed on an interval of length $\varepsilon$ into the following equivalent problem on the unit interval with the Knudsen number $1/\varepsilon$:
\begin{equation}\label{bebc}
\begin{aligned}
&\hspace{1.8cm}v_{1} \frac{\partial f}{\partial x} =\varepsilon Q(f, f), \qquad x \in[0,1], \,\, v\in \mathbb{R}^3 ,\\
&f(0, v,I) =f_L(v,I) , \quad (v_{1}>0), \quad
f(1, v,I) =  f_R( v,I) , \quad (v_{1}<0),
\end{aligned}
\end{equation}
where the collision operator $Q$ is defined in \eqref{qff}. 

We decompose the collision operator into the gain term $Q^+$ and the loss term $fL(f)$ $$Q(f, f)=Q^{+}(f, f)-f L(f), $$
where 
\begin{equation}\label{q+L}
    Q^{+}(f,f)=\int_{ A }\frac{f^{\prime} f_{*}^{\prime}}{\left(I^{\prime} I_{*}^{\prime}\right)^{\alpha}}\mathcal{B} \, dB,
\quad \text{and \;\;}
L(f)=\int_{ A}{\frac{f_{*}}{(II_*)^{\alpha}}}\mathcal{B} \,dB,\end{equation}
and write the stationary Boltzmann equation \eqref{bebc} as
$$
\frac{\partial f}{\partial x} +\frac{\varepsilon}{v_{1}} f L(f)=\frac{\varepsilon}{v_{1}}Q^{+}(f, f).
$$
Multiplying by the integrating factor

$$
\exp \left(\frac{\varepsilon}{v_1} \int_{y}^{x} L(f)(z, v,I)\, d z\right),
$$
and integrating over $[0,x]$ when $v_1>0$ and over $[x,1]$ if $v_1<0$,  we obtain the following mild form:
 \begin{align}\label{mild solution}
 \begin{split}
     f(x,v,I)&=\exp \left(-\frac{\varepsilon}{ |v_{1}|} \int_{0}^{x} L(f)(y, v,I) \,d y\right)f(0,v,I)\\
  &+\frac{\varepsilon}{ |v_{1}|} \int_{0}^{x}\exp \left(-\frac{\varepsilon}{ |v_{1}|} \int_{y}^{x} L(f)(z, v,I) \,d z\right)Q^+(f,f)\,dy\qquad (v_1>0)\\
    f(x,v,I)&=\exp \left(-\frac{\varepsilon}{ |v_{1}|} \int_{x}^{1} L(f)(y, v,I)\, d y\right)f(1,v,I)\\
  &+\frac{\varepsilon}{ |v_{1}|} \int_{x}^{1}\exp \left(-\frac{\varepsilon}{ |v_{1}|} \int_{x}^{y} L(f)(z, v,I) \,d z\right)Q^+(f,f)\,dy\qquad (v_1<0).
  \end{split}
	\end{align}
We will consider the existence of \eqref{mild solution} from now on.

\subsection{Main Result}\label{mainresult} "We first set up the notation and define our solutions and various norms..\\
\begin{itemize}
\item Throughout the paper, $\varphi$ denote 
 the following weight function:
\begin{equation*}\label{fi}
\varphi(v,I)=\exp\left(a\left(\frac{1}{2}|v|^{2}+I \right)\right) \quad (a>0).
\end{equation*}
\item We define  the weighted singular norm $\|\cdot\|_k$ and the weighted hyperplane norm $\|\cdot\|_P$ as follows:
\begin{align*}
\|f\|_{k} &=\sup _w \int_{\mathbb{R}_I^{+}}\int_{\mathbb{R}_{v}^{3}} \varphi(v,I)\frac{1}{|v-w|^{k}}\|f(\cdot, v,I)\|_{L^{\infty}_x}\, d vdI,\quad k\in \{0,1-\gamma\}, \\
\|f\|_{P} &=\sup _{P} \int_{\mathbb{R}^{+}} \int_{P}\varphi(v,I)\|f(\cdot, v,I)\|_{L^{\infty}_x} \,d \pi_{P,v}dI,
\end{align*}
where $P$ is a hyperplane in $\mathbb{R}_v^3$,
and $d\pi_{P,v}$ is the 2 dimensional measure on $P$.
\item Then, we define
\[
|||f|||=\|f\|_0+\|f\|_{1-\gamma}+\|f\|_P.
\]
\item For simplicity of notation, we use $f_{LR}$ to denote  
$$f_{LR}(v,I) = f_L(v,I)\mathbbm{1}_{v_1>0}+f_R(v,I)\mathbbm{1}_{v_1<0}.$$
\end{itemize}

We define our solutions as follows:
\begin{definition}\label{def1} A non-negative function $f$ 
	is said to be  a mild solution to \eqref{bebc} if $\varphi f\in L^1(\mathbb{R}^3_v\times \mathbb{R}^+_I;L^{\infty}_x(0,1))$
    and satisfies \eqref{mild solution}.
\end{definition}

We are now ready to state our main result. 
\begin{theorem}\label{main theorem}
    Let $1/2\leq\gamma\leq 1$. Assume that the inflow boundary data $f_{LR}\geq0$ satisfies
            \begin{equation}\label{ibc1}
              |||f_{LR}||| < \infty,
              \quad\||v_1|^{-1}(1+(|v|^2+I)^{\gamma/2})f_{LR}\|_0 < \infty.
        \end{equation}

Then, there exists a unique mild solution to \eqref{bebc} for sufficiently small $\varepsilon$.
\end{theorem}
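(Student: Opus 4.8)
The plan is to realize a mild solution as a fixed point of the solution map associated with \eqref{mild solution} and to obtain both existence and uniqueness from the Banach fixed point theorem in the large Knudsen number regime (i.e. $\varepsilon$ small). Let $\mathbb{X}$ be the space of measurable $f$ with
\[
\mathbb{X}(f):=|||f|||+\big\||v_1|^{-1}(1+(|v|^2+I)^{\gamma/2})f\big\|_0<\infty,
\]
which is exactly the quantity required to be finite for the data in \eqref{ibc1}, and put $B_\delta=\{f\in\mathbb{X}: f\ge0,\ \mathbb{X}(f)\le\delta\}$. Define $\Phi$ on $B_\delta$ by letting $\Phi(g)$ be the right-hand side of \eqref{mild solution} with the unknown replaced throughout by $g$: the loss factor becomes $\exp(-\tfrac{\varepsilon}{|v_1|}\int L(g))$, the gain term becomes $Q^+(g,g)$, and the boundary term remains $f_{LR}$. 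A fixed point of $\Phi$ is precisely a mild solution in the sense of Definition \ref{def1}. Two structural facts will be used throughout: since $f_{LR}\ge0$, $Q^+(g,g)\ge0$ for $g\ge0$, and the exponential factors are positive, $\Phi$ maps nonnegative functions to nonnegative functions; and since $L(g)\ge0$, every exponential factor is $\le1$ — this is the only property of the loss term we use, as no coercive lower bound on $L$ is available for a general, non-Maxwellian $f$.

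The analytic core of the argument is a family of bilinear estimates for the gain term, supplied by the collision-operator estimates that form the technical body of the paper and which rely on genuinely polyatomic techniques — in particular, performing the integrations in the energy-partition parameters $r$ and $R$, which produce the regularizing gain in the weighted and singular estimates alluded to in the abstract. Schematically, for nonnegative $g,h$ one needs
\[
\Big\|\tfrac{1}{|v_1|}\int_0^1 Q^+(g,h)(y,\cdot)\,dy\Big\|_{k}\ \lesssim\ \mathbb{X}(g)\,\mathbb{X}(h),\qquad k\in\{0,1-\gamma\},
\]
together with the analogous bound with $\|\cdot\|_P$ on the left, and a linear bound $\|L(g)\|_{(\text{weighted }L^\infty)}\lesssim\mathbb{X}(g)$ controlling the loss factors. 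The step I expect to be the main obstacle is the factor $|v_1|^{-1}$ inherited from the integrating factor: it is not integrable against an arbitrary $L^1_v$ density, so one cannot crudely pull the $x$-supremum through the $dy$-integration. The remedy is to exploit the collision structure — via a Carleman-type representation of $Q^+$, the gain operator is rewritten with $g,h$ weighed against kernels supported on hyperplanes transverse to $e_1$, which is exactly what the weighted hyperplane norm $\|\cdot\|_P$ is built to absorb, while the polyatomic $r,R$-integrations supply the extra decay that keeps those kernels integrable. It is also in this estimate that the collision kernel unavoidably contributes the growth factor $(1+(|v|^2+I)^{\gamma/2})$ on one argument, which, combined with the $|v_1|^{-1}$ weight, is precisely why the second condition in \eqref{ibc1} is imposed on $f_{LR}$; and the restriction $\gamma\ge\tfrac12$ enters here as well, needed to close the singular estimates in which the exponent $1-\gamma$ must be dominated by $\gamma$.

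Granting these estimates, self-mapping is immediate: bounding the exponential by $1$ on the boundary term and applying the bilinear estimates to the gain contribution gives
\[
\mathbb{X}(\Phi(g))\ \le\ \mathbb{X}(f_{LR})\ +\ C\,\varepsilon\,\mathbb{X}(g)^2,
\]
so, taking $\delta=2\,\mathbb{X}(f_{LR})$, the ball $B_\delta$ is invariant provided $2C\varepsilon\,\mathbb{X}(f_{LR})\le1$; note that no smallness of the data is needed, only $\mathbb{X}(f_{LR})<\infty$ as assumed. For the contraction, decompose $Q^+(g_1,g_1)-Q^+(g_2,g_2)=Q^+(g_1-g_2,g_1)+Q^+(g_2,g_1-g_2)$ by bilinearity, and estimate the difference of the two loss factors through $|e^{-a}-e^{-b}|\le|a-b|$ and the linear bound for $L(g_1-g_2)$; every resulting term carries one factor $\varepsilon$ and a norm bounded by $\delta$, whence
\[
\mathbb{X}(\Phi(g_1)-\Phi(g_2))\ \le\ C\,\varepsilon\,\delta\,\mathbb{X}(g_1-g_2),
\]
a strict contraction once $\varepsilon$ is small. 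The Banach fixed point theorem then produces a unique $f\in B_\delta$ with $f=\Phi(f)$; it is nonnegative since $B_\delta$ is, and $\varphi f\in L^1(\mathbb{R}^3_v\times\mathbb{R}^+_I;L^\infty_x(0,1))$ because $|||f|||<\infty$, so $f$ is a mild solution. Finally, uniqueness within the class of all mild solutions obeying the bounds \eqref{ibc1} follows by applying the same difference estimate to any two such solutions and absorbing the right-hand side for $\varepsilon$ sufficiently small.
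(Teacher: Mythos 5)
There is a genuine gap, and it sits exactly where you declared your shortcut: ``since $L(g)\ge 0$, every exponential factor is $\le 1$ --- this is the only property of the loss term we use.'' With only that, the gain contribution of the map is $\frac{\varepsilon}{|v_1|}\int_0^x Q^+(g,g)\,dy$, and its $\|\cdot\|_0$ (or $\|\cdot\|_{1-\gamma}$, $\|\cdot\|_P$) norm requires exactly the bilinear bound you postulate, $\bigl\||v_1|^{-1}\int_0^1Q^+(g,h)\,dy\bigr\|_k\lesssim \mathbb{X}(g)\,\mathbb{X}(h)$. That bound is not available: $Q^+(g,h)$ has no reason to vanish on the hyperplane $\{v_1=0\}$, and $|v_1|^{-1}$ is not locally integrable across it, so even if you control the transverse marginal $\int\varphi\,\|Q^+\|_{L^\infty_x}\,dv_2dv_3dI$ by the hyperplane norm (which is all $\|\cdot\|_P$ gives you), the remaining $\int_{|v_1|\le 1}|v_1|^{-1}dv_1$ diverges; no Carleman rewriting or $r,R$-averaging produces vanishing of the gain term at $v_1=0$. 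The paper's resolution is the opposite of your disclaimer: a coercive lower bound \emph{is} available, not a priori but as a property propagated by the solution map inside the solution space, $L(f)\ge a_2\bigl(1+(|v|^2+I)^{\gamma/2}\bigr)$ (Lemma \ref{invariance L lower bound}, using $f_{LR}\ge0$ through the damped boundary term). This is what tames the singular factor: $\frac{\varepsilon}{|v_1|}\int_0^x e^{-\frac{\varepsilon a_2}{|v_1|}(x-y)}dy\le\frac{1}{a_2}\bigl(1-e^{-a_2\varepsilon/|v_1|}\bigr)$, whose $v_1$-integral over $|v_1|\le1/\varepsilon$ is $O(\varepsilon\ln\frac1\varepsilon)$, combined with $\|Q^+\|_P$ (Lemma \ref{planenorm}) for the transverse variables and $\|Q^+\|_0$ (Lemma \ref{nbb}) for $|v_1|>1/\varepsilon$. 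The same coercivity, with its $(|v|^2+I)^{\gamma/2}$ growth, is indispensable in the contraction: the difference of the exponentials produces, via Lemma \ref{lemma6rs}, a factor $\frac{\varepsilon}{|v_1|}\bigl(1+(|v|^2+I)^{\gamma/2}\bigr)$ on top of the existing $\frac{\varepsilon}{|v_1|}$, i.e.\ an $\varepsilon^2|v_1|^{-2}$-type growth that the paper cancels through $xe^{-x}\le1$ against the growing exponent; bounding the exponential by $1$ leaves this term uncontrolled.

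A secondary structural problem: you put $\||v_1|^{-1}(1+(|v|^2+I)^{\gamma/2})f\|_0$ into the norm $\mathbb{X}$ of your fixed-point space, so your claimed self-map bound $\mathbb{X}(\Phi(g))\le \mathbb{X}(f_{LR})+C\varepsilon\,\mathbb{X}(g)^2$ would require estimating $\||v_1|^{-1}(1+\cdots)\,\frac{\varepsilon}{|v_1|}\int_0^xQ^+(g,g)\,dy\|_0$, i.e.\ a $|v_1|^{-2}$-weighted bound on the gain term, which is worse than the already unavailable $|v_1|^{-1}$ bound. The paper avoids this entirely: the weighted condition in \eqref{ibc1} is imposed only on the boundary data and used exactly once, in the contraction estimate of the head term, where it multiplies $f_{LR}$ explicitly and never needs to be propagated. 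So the correct architecture is the paper's: fixed point in $\mathcal{A}$ defined by $\|\cdot\|_0$, $\|\cdot\|_{1-\gamma}$, $\|\cdot\|_P$ bounds \emph{plus} the propagated lower bound on $L$, with \eqref{ibc1} serving only as a data hypothesis.
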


In \cite{maslova2,maslova1}, Maslova proved the existence of a unique mild stationary solution to the Boltzmann equation in a slab for monatomic molecules with the hard sphere collision kernel. Our goal in this paper is to extend this result in two directions: polyatomic molecules and hard potential collision kernels. An interesting observation in the present work is that these two objectives are interconnected.

To apply the Banach fixed point theorem, we need to estimate the collision operator in various norms.
Maslova proved that the weighted $\|\cdot\|_0$ norm estimate of $Q^+$ for the monatomic Boltzmann equation can be controlled by $\|f\|_{-1}\|f\|_0$ in the hard sphere case. Rather unexpectedly, we obtain a stronger estimate of $Q^+$ for the polyatomic Boltzmann equation over the range of $1/2\leq\gamma\leq1$. Namely, the weighted $\|\cdot\|_0$ norm of the polyatomic $Q^+$ can be controlled  using only the $\|\cdot\|_0$ norm of $f$ over the range $1/2\leq\gamma\leq1$.

The key observation is that we can enjoy an interesting polyatomic regularizing effect if we carry out the integration in the parameter $r$ first in the calculation, which describes the distribution of the internal energy in the pre-post collision process. More precisely, in the estimate of $Q^+$ in $\|\cdot\|_0$, we arrive at the following integral (See Lemma \ref{nbb}):
\begin{equation*}
\begin{aligned}
\int_{A\times\mathbb{R}^3\times\mathbb{R}^+} \|{f f_{*}}\|_{L^{\infty}_x}\exp\left(a\left( \frac{|v+v_*|^2}{4}+{RE}+r(1-R)E\right)\!\right)E^{\frac{\gamma}{2}} (1-R)\,  dR dr d\sigma dI_{*} dv_{*}dvdI.
\end{aligned}
\end{equation*}
By carrying out integration in the internal energy distribution parameter $r$ first, we arrive at the integration involving only the pre-collision velocities, which enable one to bound $\|Q^+\|_0$ by $\|f\|_0$ only:
\begin{equation*}
\begin{aligned}
  \|Q^{+}(f,f)\|_0
  %&= \frac{4\pi}{a} \int \|{f %f_{*}}\|_{L^{\infty}_x}\exp\left(a\left(\frac{|v+v_*|^2}%{4}+{\frac{|v-v_*|^2}{4}+I+I_*}\right)\right)\frac{1}{E^{1-%\frac{\gamma}{2}}} \,   dI_{*} dv_{*}dvdI\\
  &\leq \frac{4\pi}{a}\int \|f f_{*}\|_{L^{\infty}_x}\exp\left(a\left(\frac{|v|^2}{2}+{\frac{|v_*|^2}{2}+I+I_*}\right)\right)\frac{1}{E^{1-\frac{\gamma}{2}}} \,  dI_{*} dv_{*}dvdI.
\end{aligned}
\end{equation*}
Such regularization by the polyatomic parameter, of course, does not arise in the corresponding computation for the monatomic collision operator. This observation is also crucially used in the singular norm estimate of $Q^+$ (see Lemma \ref{singular norm}). Unfortunately, this singular norm estimate is available only for the range $1/2\leq\gamma\leq1$ due to some integrability issue, which is why our results cover only half of the hard potential range. This  will be left for future research.

In \cite{maslova2,maslova1}, the contraction estimate of the head term in the mild form was mistakenly overlooked. In current work, we observe that a velocity growth arise in the missing contraction estimate of the head term, which cannot be controlled by the constant lower bound of the loss term as in \cite{maslova2,maslova1}.
To cancel out this velocity growth and keep the invariance of the solution map, we show that the following stronger lower bound estimate is preserved by the solution map within the solution space (see Lemma \ref{invariance L lower bound}):
$$L(f)\geq a_2\left(1+\left(|v|^2+I\right)^{\frac{\gamma}{2}}\right).$$
We note that this estimate does not hold as an a priori estimate but holds only for elements in the solution space, in contrast to the time-dependent problem for the homogeneous Boltzmann equation or the multiplication operator in the linearized collision operator for the Boltzmann equation near equilibrium.

\subsection{Literature Review: }
The mathematical literature on the stationary Boltzmann equation can be broadly divided into the following three classes, based on the type of solutions considered.

The first class considers the unique mild solutions in a 1-dimensional interval when the interval is sufficiently small, which corresponds to the small-time existence in the time-dependent problem. As in the rescaled problem \eqref{bebc}, such a small interval problem can be rewritten as a stationary flow problem with a large Knudsen number. Research in this direction was initiated by Maslova \cite{maslova2,maslova1} (see also \cite{Gomeshi}), who studied the existence of a unique stationary solution for the monatomic hard sphere Boltzmann equation. In the current work, we extend this result to the polyatomic Boltzmann equation with a hard potential collision kernel.

The second class of stationary results involves the near-equilibrium regime, where the solution is assumed to be a small perturbation of the 
equilibrium. The first result can be traced back to \cite{Asano-Ukai} in which an exterior problem was considered.
 Half-space problems for the linearized Boltzmann equation was studied in  \cite{B-C-N,Chen-Liu-Tong,Deng-Wang-Yu2008,Golse-Perthame-Sulem1988,Guo-Huang-Wang2021,Huang-Jiang-Wang2023,Huang-Wang2022,Liu-Yu,Ukai-Yang-Yu,Wu2021}.
Hydrodynamic limit of stationary Boltzmann equation in the framework of Hilbert expansion was studied in \cite{Esposito-Guo-Kim-Marra2013,E-G-K-M,esposito1994,esposito1995,Guo-Huang-Wang2021,Wu2016}.  
In \cite{E-G-K-M}, the authors studied the incompressible hydrodynamic limit of stationary solutions in a bounded domain.
Regularity of stationary solutions to the Boltzmann equation near equilibrium can be found in \cite{Chen-Kim, Chen-Siam,Guo-Kim-Tonon}.
%Tai-Ping Liu studied the existence of  a highly rarefied gas flow between two plates in a circular pipe \cite{Liu-Ikun}.

Studies in the weak solution framework also yielded fruitful results. The existence problem of weak solutions of stationary Boltzmann equation was initiated by a serious of paper by Arkeryd and Nouri \cite{an1995,an20001,an2000,an2002,an2005}. Extensions to gas mixture problem was studied in \cite{b2008,b20081,b20080,b2010}.
For the results on hydrodynamic limit in this direction, we refer to \cite{arkeryd2012ghost,an2006,b2012}.

The mathematical study of the polyatomic Boltzmann equation is still in its early stages. The Fredholm property of the linearized collision operator of the polyatomic Boltzmann equation was established in \cite{Bernhoff2024b,Bernhoff2024a,us,us1}. Cauchy problem for spatially 
homogeneous polyatomic Boltzmann equation was studied in \cite{milana}. Existence and asymptotic stability of the polyatomic Boltzmann equation near equilibrium was obtained in \cite{Duan-Li2023}.
We refer to \cite{Hwang-Yun2022,Park-Yun2016,Park-Yun2019,Son-Yun2023,yun} for mathematical studies of the polyatomic BGK model, which is a relaxational model equation of the polyatomic Boltzmann equation.\\

This paper is organized as follows: In Section 2, various preliminary lemmas are presented, which will be fruitfully used throughout the paper. Section 3 is devoted to the derivation of various necessary norm estimates for the gain term of the collision operator. In Section 4, the solution space is presented, and the solution map is shown to be invariant within this space. The contraction property of the solution map is established in Section 5, which, together with its invariance, leads to the existence of a unique stationary solution by the Banach fixed-point theorem.

\section{Preliminary Lemmas}\label{plemma}
In this Section, we present some preliminary lemmas that will be crucially used throughout the paper. We start with the following estimate of the $\sigma$-integration.

\begin{lemma}\label{lem1}
	For $0\leq\gamma\leq 1$, the following inequality holds
	\begin{equation*}
		\begin{aligned}
			&\int_{\mathbb{S}^2} \frac{1}{|v'+w|^{1-\gamma}} \, d\sigma \leq \frac{8\pi}{(1+\gamma)(RE)^{\frac{1-\gamma}{2}}},
		\end{aligned}
	\end{equation*}
 where $v'$ is the post-collisional velocity defined in \eqref{pre-post velocity}.
\end{lemma}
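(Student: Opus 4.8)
The plan is to use rotational symmetry to collapse the surface integral to a one–dimensional one, evaluate that integral in closed form, and then dispose of the resulting elementary inequality. Write $c:=\tfrac{v+v_*}{2}+w$ and $\rho:=\sqrt{RE}$, so that $v'+w=c+\rho\sigma$ and the integrand depends on $\sigma$ only through its angle $\theta$ with the fixed vector $c$. Parametrizing $\mathbb{S}^2$ by $\theta\in[0,\pi]$ (the azimuthal angle contributing a factor $2\pi$) and substituting $u=\cos\theta$ gives
\begin{equation*}
\int_{\mathbb{S}^2}\frac{d\sigma}{|v'+w|^{1-\gamma}}
=2\pi\int_{-1}^{1}\bigl(|c|^2+\rho^2+2|c|\rho\,u\bigr)^{-\frac{1-\gamma}{2}}\,du .
\end{equation*}
Since the argument is affine in $u$ and $1-\tfrac{1-\gamma}{2}=\tfrac{1+\gamma}{2}>0$, the $u$–integral is elementary; using $|c|^2+\rho^2\pm 2|c|\rho=(|c|\pm\rho)^2$ one obtains
\begin{equation*}
\int_{\mathbb{S}^2}\frac{d\sigma}{|v'+w|^{1-\gamma}}
=\frac{2\pi}{(1+\gamma)\,|c|\,\rho}\Bigl((|c|+\rho)^{1+\gamma}-\bigl|\,|c|-\rho\,\bigr|^{1+\gamma}\Bigr),
\end{equation*}
the degenerate cases $|c|=0$ or $\rho=0$ being checked directly.

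The remaining, and only delicate, point is to bound the right–hand side so that the $\sqrt{RE}$–singularity appears with the claimed constant and no leftover dependence on $|c|$. To this end I would record the elementary inequality: for $a,b\ge 0$ and $\gamma\in[0,1]$,
\begin{equation*}
(a+b)^{1+\gamma}-|a-b|^{1+\gamma}\le (1+\gamma)\cdot 2\min(a,b)\,\max(a,b)^{\gamma}.
\end{equation*}
Indeed, the left–hand side equals $(1+\gamma)\int_{|a-b|}^{a+b}t^{\gamma}\,dt$, and the interval $[\,|a-b|,\,a+b\,]$ has length $2\min(a,b)$ and midpoint $\max(a,b)$; since $t\mapsto t^{\gamma}$ is concave on $[0,\infty)$, Jensen's inequality bounds its average over that interval by its value at the midpoint. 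Taking $a=|c|$, $b=\rho$ and using $|c|\rho=\min(|c|,\rho)\,\max(|c|,\rho)$, the displayed formula yields
\begin{equation*}
\int_{\mathbb{S}^2}\frac{d\sigma}{|v'+w|^{1-\gamma}}
\le \frac{4\pi}{\max(|c|,\rho)^{1-\gamma}}\le \frac{4\pi}{\rho^{1-\gamma}}=\frac{4\pi}{(RE)^{\frac{1-\gamma}{2}}}\le \frac{8\pi}{(1+\gamma)\,(RE)^{\frac{1-\gamma}{2}}},
\end{equation*}
where the last step uses $1+\gamma\le 2$; this is the desired bound (in fact a slightly sharper one, which is harmless).

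The main obstacle is conceptual rather than computational: $|v'+w|$ has no positive lower bound, so one genuinely needs a singular estimate, and the subtlety is that the smallness of $|c+\rho\sigma|$ is controlled by the scale $\rho=\sqrt{RE}$ — the set of $\sigma$ on which $|c+\rho\sigma|\lesssim\rho$ has $\sigma$–measure of order one — which is exactly what produces the factor $(RE)^{-(1-\gamma)/2}$. The closed–form evaluation makes this transparent. An alternative route avoids it by using $|c+\rho\sigma|\ge\rho\,|\sin\theta|$ and reducing to the Beta–function integral $\int_0^\pi\sin^{\gamma}\theta\,d\theta$, but then one still has to verify $(1+\gamma)\int_0^\pi\sin^{\gamma}\theta\,d\theta\le 4$, which is of comparable difficulty to the $\min$–$\max$ inequality above, so I would favor the explicit computation.
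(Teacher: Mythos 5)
Your proposal is correct and follows essentially the same route as the paper: rotational symmetry reduces the sphere integral to a one-dimensional integral evaluated in closed form as $\frac{2\pi}{(1+\gamma)|c|\rho}\bigl((|c|+\rho)^{1+\gamma}-\bigl||c|-\rho\bigr|^{1+\gamma}\bigr)$, followed by an elementary inequality to extract the $(RE)^{-\frac{1-\gamma}{2}}$ factor. The only difference is cosmetic: where the paper simply invokes $\frac{(x+1)^{1+\gamma}-|x-1|^{1+\gamma}}{x}\le 4$, you prove a slightly sharper bound via concavity of $t\mapsto t^{\gamma}$ and Jensen, obtaining $4\pi/(RE)^{\frac{1-\gamma}{2}}$ before relaxing to the stated constant.
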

\begin{proof}
    From \eqref{pre-post velocity}, we see that        \begin{equation*}
		\begin{aligned}
			\int_{\mathbb{S}^2} \frac{1}{|v'+w|^{1-\gamma}} \,d\sigma &= \int_{\mathbb{S}^2} \frac{1}{|\frac{v+v_{*}}{2}+\sqrt{RE}\sigma+w|^{1-\gamma}} \,d\sigma \\
			&= \frac{1}{{(RE)^{\frac{1-\gamma}{2}}}} \int_{\mathbb{S}^2} \frac{1}{|\sigma+ \bar{c}|^{1-\gamma}} \, d\sigma, 
		\end{aligned}
	\end{equation*}
	where  $\bar{c} = \frac{1}{\sqrt{RE}}\left(\frac{v+v_*}{2}+w\right).$ 
    Without loss of generality, we set $\bar{c} = (0,0,|\bar{c}|)$, to find\\ 
	\begin{align*}
		|\sigma + \bar{c}|^{1-\gamma} &= |(\sin\phi\cos\theta, \sin\phi\sin\theta, \cos\phi+|\bar{c}|)|^{1-\gamma} \\
		&= \left( (\sin\phi\cos\theta)^2+ (\sin\phi\sin\theta)^2+ (\cos\phi+|\bar{c}|)^2 \right)^{\frac{1-\gamma}{2}} \\
		&= (1+2|\bar{c}|\cos\phi + |\bar{c}|^2 )^{\frac{1-\gamma}{2}}.
	\end{align*}
	Hence, we have 
	\begin{align*}
		&\frac{1}{(RE)^{\frac{1-\gamma}{2}}} \int_{\mathbb{S}^2} \frac{1}{|\sigma+ \bar{c}|^{1-\gamma}} \, d\sigma \\
		&= \frac{2\pi}{{(RE)^{\frac{1-\gamma}{2}}}} \int_0^{\pi} (1+2|\bar{c}|\cos\phi + |\bar{c}|^2 )^{-\frac{1-\gamma}{2}} \sin\phi \, d\phi \\
		&= \frac{2\pi}{(RE)^{\frac{1-\gamma}{2}}} \int_{-1}^1 (1+ 2|\bar{c}|t+ |\bar{c}|^2)^{-\frac{1-\gamma}{2}} \,dt  \quad (t=\cos\phi) \\
		&= \frac{\pi}{|\bar{c}| (RE)^{\frac{1-\gamma}{2}}} \int_{(|\bar{c}|-1)^2}^{(|\bar{c}|+1)^2} p^{-\frac{1-\gamma}{2}} \, dp\qquad (p=1+ 2|\bar{c}|t+ |\bar{c}|^2) \\
		&= \frac{2\pi}{(1+\gamma)|\bar{c}| (RE)^{\frac{1-\gamma}{2}}} (|\bar{c}|+1)^{\gamma+1}- ||\bar{c}|-1|^{\gamma+1}) \\
		&\leq \frac{8\pi}{(1+\gamma)(RE)^{\frac{1-\gamma}{2}}}.
	\end{align*}
	In the last line, we used  
	$$\frac{(x+1)^{1+\gamma}-|x-1|^{1+\gamma}}{x}\leq 4\qquad (x\geq0)$$
	for any $\gamma$ such that $0\leq 1+\gamma\leq 2$.
\end{proof}

\noindent We now give a symmetry property of the Boltzmann gain operator, which will be used in the norm estimates of the gain operator in the following section.
\begin{lemma}[Symmetry property of the Collision Operator]\label{lem6}
For any nice function $\varphi$ for which the integral
$
 \int_{\mathbb{R}^3\times\mathbb{R}^+} \varphi(v,I)Q^+(f,f)dvdI
$
is well-defined, the following property holds:
\begin{equation*}
\begin{aligned}
  \int_{\mathbb{R}^3\times\mathbb{R}^+} \varphi(v,I)Q^+(f,f)\, dvdI= \int_{A\times \mathbb{R}^3\times\mathbb{R}^+} \varphi\left(v',I'\right){{ff_*} }\mathcal{B}\,  dB dvdI.
\end{aligned}
\end{equation*}
\end{lemma}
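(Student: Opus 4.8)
The plan is to start from the definition \eqref{q+L} of $Q^+$, substitute it into $\int \varphi(v,I) Q^+(f,f)\, dv dI$, and interpret the resulting iterated integral over $A \times \mathbb{R}^3_v \times \mathbb{R}^+_I$ as a single integral against the full measure $dB\, dv\, dI$. Explicitly, this gives
\begin{equation*}
\int_{\mathbb{R}^3\times\mathbb{R}^+} \varphi(v,I)Q^+(f,f)\, dv dI = \int_{A\times\mathbb{R}^3\times\mathbb{R}^+} \varphi(v,I)\,\frac{f(v',I')f(v'_*,I'_*)}{(I'I'_*)^\alpha}\,\mathcal{B}(v,v_*,I,I_*,r,R,\sigma)\, dB\, dv\, dI,
\end{equation*}
where $dB = (r(1-r))^\alpha (1-R)^{2\alpha+1} R^{1/2} I^\alpha I_*^\alpha\, dR\, dr\, d\sigma\, dI_*\, dv_*$. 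The idea is then to apply the Borgnakke–Larsen change of variables that sends the pre-collisional configuration $(v,v_*,I,I_*,r,R)$ to the post-collisional configuration $(v',v'_*,I',I'_*,r',R')$. Under this involutive change of variables, the measure $dB\, dv\, dI$ (together with the compensating factor $I^\alpha I_*^\alpha / (I'I'_*)^\alpha$ if one keeps the $I$-weights inside) transforms into the corresponding post-collisional measure, so that $\varphi(v,I)$ is replaced by $\varphi(v',I')$ and the kernel $\mathcal{B}(v,v_*,I,I_*,r,R,\sigma)$ is replaced by $\mathcal{B}(v',v'_*,I',I'_*,r',R',\sigma)$, which by the microreversibility relation \eqref{reversibility} equals $\mathcal{B}(v,v_*,I,I_*,r,R,\sigma)$ again. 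The factor $f(v',I')f(v'_*,I'_*)/(I'I'_*)^\alpha$ becomes $f(v,I)f(v_*,I_*)/(II_*)^\alpha = ff_*/(II_*)^\alpha$, and after cancelling the $I^\alpha I_*^\alpha$ weights one is left with exactly $\int_{A\times\mathbb{R}^3\times\mathbb{R}^+} \varphi(v',I')\, ff_*\,\mathcal{B}\, dB\, dv\, dI$, which is the claimed identity.

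Concretely, the steps are: (i) write out $\int \varphi Q^+ dv dI$ as the full integral over $A \times \mathbb{R}^3 \times \mathbb{R}^+$; (ii) recall or invoke the Jacobian identity for the Borgnakke–Larsen parametrization — namely that the map $(v, v_*, I, I_*, r, R, \sigma) \mapsto (v', v'_*, I', I'_*, r', R', \sigma)$ is an involution preserving the measure $(r(1-r))^\alpha (1-R)^{2\alpha+1} R^{1/2} (II_*)^\alpha\, dR\, dr\, d\sigma\, dI_*\, dv_*\, dv\, dI$; this is the standard weak-formulation / collisional-symmetry computation for the polyatomic model of Bourgat et al., and I would cite \cite{9} or \cite{milana} for it, or prove it by composing the translational change of variables $(v,v_*)\mapsto(v',v'_*)$ at fixed center of mass and energy split with the $(r,R)\mapsto(r',R')$ change; (iii) apply this change of variables to the integral from step (i), using \eqref{reversibility} to identify $\mathcal{B}$ at the post-collisional arguments with $\mathcal{B}$ at the pre-collisional arguments, and noting $(I'I'_*)^\alpha$ in the denominator cancels against the $(I'I'_*)^\alpha$ appearing in the transformed measure; (iv) read off the result.

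The main obstacle is step (ii): establishing (or correctly citing) the measure-preservation property of the Borgnakke–Larsen change of variables, including the precise bookkeeping of the $r$, $R$, and internal-energy weights so that everything cancels cleanly. This is purely a Jacobian computation and is by now standard in the polyatomic Boltzmann literature, so I would state it as a known fact with a reference rather than reprove it; the rest of the argument is a direct substitution. One should also take a moment to justify that the iterated integral may legitimately be treated as a single integral and the change of variables applied, which follows from the hypothesis that $\int \varphi(v,I) Q^+(f,f)\, dv dI$ is well-defined together with the non-negativity of $f$ and of the kernel (so Tonelli applies).
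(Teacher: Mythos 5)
Your proposal is correct and follows essentially the same route as the paper: substitute the definition of $Q^+$, apply the Borgnakke--Larsen pre-to-post change of variables with the known Jacobian $J=\frac{R^{1/2}(1-R)}{R'^{1/2}(1-R')}$ (cited rather than rederived, as the paper also does via \cite{9,milana}), use the microreversibility of $\mathcal{B}$, cancel the $(I'I'_*)^{\alpha}$ weights, and relabel the variables. The only difference is cosmetic: you additionally note the Tonelli justification, which the paper leaves implicit.
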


\begin{proof}
  We consider the transform:   
\begin{equation*}
    (v,v_*,I,I_*,r,R,\sigma)\mapsto(v',v'_*,I',I'_*,r',R',\sigma'),
\end{equation*}
where the pre-collisional variables are given in terms of the post-collisional variables as
\begin{equation*}
\begin{aligned}
&v=\frac{v'+v'_*}{2}+\sqrt{{R'E}}\sigma',\quad \quad v_*=\frac{v'+v'_*}{2}-\sqrt{{R'E}}\sigma',\\
&\hspace{1cm} I= r'(1-R')E, \hspace{0.75cm}
I_*= (1-r')(1-R')E,\\
&\hspace{1.6cm} r=\frac{I'}{I'+I_*'}, \hspace{0.9cm}
R= \frac{1}{4E}|v'-v'_*|^2.
\end{aligned}
\end{equation*}
Then, an explicit computation shows that the Jacobian of the transform is given by   $J=\frac{R^{\frac{1}{2}}(1-R)}{R'^{\frac{1}{2}}(1-R')}$ as derived in \cite{9,milana,Shahine2023}.
Therefore, we have
\begin{equation*}
\begin{aligned}
  \int \varphi(v,I)&Q^+(f,f)\,dvdI\\
  = &\int \varphi(v,I){\frac{f'f'_{*}}{(I'I'_*)^{\alpha}} }\mathcal{B}  (r(1-r))^{\alpha}(1-R)^{2\alpha+1} R^{\frac{1}{2}} (II_*)^{\alpha}\, dR dr d\sigma dI_{*} dv_{*}dvdI \\
  =&\int \varphi\left(\frac{v'+{v'}_*}{2}+\sqrt{{R'E}}\sigma',r'(1-R')E\right){\frac{f'f'_{*}}{(I'I'_*)^{\alpha}} }\mathcal{B}  (r'(1-r'))^{\alpha} \\
  &\hspace{1cm}(1-R')^{2\alpha+1} {R'}^{\frac{1}{2}}(I'I'_*)^{\alpha}\, dR' dr' d\sigma' dI'_{*} dv'_{*}dv'dI'\\
  =&\int \varphi\left(\frac{v'+v'_*}{2}+\sqrt{{R'E}}\sigma',r'(1-R')E\right){{f(v',I')f(v_*',I'_*)} }\mathcal{B}   \\
  &\hspace{1cm}(r'(1-r'))^{\alpha}(1-R')^{2\alpha+1} {R'}^{\frac{1}{2}}\,dR' dr' d\sigma' dI'_{*} dv'_{*}dv'dI'.
\end{aligned}
\end{equation*}
By dropping the `prime' everywhere we get
\begin{equation*}
\begin{aligned}
  \int \varphi(v,I)Q^+(f,f)\,dvdI
  &= \int \varphi\left(\frac{v+v_*}{2}+\sqrt{{RE}}\sigma,r(1-R)E\right){{f(v,I)f(v_*,I_*)} }\mathcal{B}  \\
  &\hspace{2cm}(r(1-r))^{\alpha}(1-R)^{2\alpha+1} {R}^{\frac{1}{2}} \,dR dr d\sigma dI_{*} dv_{*}dvdI\\
  &= \int \varphi\left(v',I'\right){{ff_*} }\mathcal{B}  (r(1-r))^{\alpha}(1-R)^{2\alpha+1} {R}^{\frac{1}{2}}\, dR dr d\sigma dI_{*} dv_{*}dvdI.
\end{aligned}
\end{equation*} 
\end{proof}

We now provide an upper bound for the collision frequency $L$, which will be used throughout the paper.

\begin{lemma}\label{lemma6rs}
For $0\leq\gamma\leq 1,$ the collision frequency $L$ satisfies
    \begin{align*}
        L(f)\leq 4\pi\frac{e^a}{a}\left(1+\left(|v|^2+I\right)^{\frac{\gamma}{2}}\right)\|f\|_0.
    \end{align*}
\end{lemma}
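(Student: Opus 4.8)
\textbf{Proof proposal for Lemma \ref{lemma6rs}.}

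The plan is to bound $L(f)$ directly from its definition in \eqref{q+L}, namely
\[
L(f)=\int_{A}\frac{f(x,v_*,I_*)}{(II_*)^{\alpha}}\,\mathcal{B}\,(r(1-r))^{\alpha}(1-R)^{2\alpha+1}R^{\frac12}I^{\alpha}I_*^{\alpha}\,dR\,dr\,d\sigma\,dI_*\,dv_*,
\]
so that the two factors $I^{\alpha}$ cancel and we are left with the integrand $f_*\,\mathcal{B}\,(r(1-r))^{\alpha}(1-R)^{2\alpha+1}R^{\frac12}\,dR\,dr\,d\sigma\,dI_*\,dv_*$. The first step is to insert the collision kernel bound from \eqref{eqm} together with the upper bound $\mathcal{B}=E^{\gamma/2}\leq \bigl(|v-v_*|^\gamma+(I+I_*)^{\gamma/2}\bigr)$ recorded after \eqref{eqm}. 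Since $0\le\gamma\le 1$, elementary inequalities give $|v-v_*|^{\gamma}\le (|v|+|v_*|)^{\gamma}\le 1+|v|^\gamma+|v_*|^\gamma \le 1+(|v|^2+I)^{\gamma/2}+(|v_*|^2+I_*)^{\gamma/2}$ and similarly $(I+I_*)^{\gamma/2}\le I^{\gamma/2}+I_*^{\gamma/2}\le (|v|^2+I)^{\gamma/2}+(|v_*|^2+I_*)^{\gamma/2}$, so that
\[
\mathcal{B}\le C\Bigl(1+(|v|^2+I)^{\gamma/2}\Bigr)\Bigl(1+(|v_*|^2+I_*)^{\gamma/2}\Bigr)
\]
for an absolute constant $C$ (in fact one can chase constants to get the clean factor $4\pi e^a/a$ claimed, see below). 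The point of this factorization is that the $(v,I)$-dependent factor pulls outside the integral, leaving exactly $\bigl(1+(|v|^2+I)^{\gamma/2}\bigr)$ times an integral over $(v_*,I_*,r,R,\sigma)$ of $f_*$ against a bounded weight.

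The second step is to handle that remaining integral. The angular integral $\int_{\mathbb S^2}d\sigma = 4\pi$ is trivial since the weight no longer depends on $\sigma$. For the $r$ and $R$ integrals, $(r(1-r))^{\alpha}$ and $(1-R)^{2\alpha+1}R^{1/2}$ are bounded on $[0,1]$ and integrable, contributing a finite constant depending only on $\alpha$ — but to land exactly on the stated constant $4\pi e^a/a$ one should instead keep the weight $\varphi(v_*,I_*)$ in reserve: write $f_* = \varphi(v_*,I_*)^{-1}\varphi(v_*,I_*)f_*$ and note that $\bigl(1+(|v_*|^2+I_*)^{\gamma/2}\bigr)\varphi(v_*,I_*)^{-1}\le e^{?}\cdots$; more simply, bound $\bigl(1+(|v_*|^2+I_*)^{\gamma/2}\bigr)(r(1-r))^\alpha(1-R)^{2\alpha+1}R^{1/2}$ crudely and absorb everything into the definition of $\|f\|_0=\sup_w\int \varphi(v_*,I_*)|v_*-w|^{0}\|f_*\|_{L^\infty_x}\,dv_*dI_* = \int \varphi f_*$. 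Taking the supremum over $w$ is vacuous here since $k=0$, so the $v_*,I_*$ integral of $\varphi(v_*,I_*)\|f(\cdot,v_*,I_*)\|_{L^\infty_x}$ is precisely $\|f\|_0$. The factor $e^a$ and the $1/a$ presumably come from bounding $\bigl(1+(|v_*|^2+I_*)^{\gamma/2}\bigr)\le e^{a(\frac12|v_*|^2+I_*)}\cdot(\text{const})$-type estimates or, more likely, from a Gaussian-type integral $\int_0^1 (1-R)^{\cdots}R^{1/2}e^{aR(\cdots)}$-kind of computation already appearing in the paper's $Q^+$ estimates; the cleanest route is to follow exactly the $r$-first integration trick advertised in the introduction and in Lemma \ref{nbb}.

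The main obstacle I anticipate is \emph{not} the structure of the argument — which is a routine "kernel bound, factor out the $(v,I)$-part, integrate the rest" computation — but rather tracking the constants precisely enough to obtain the exact prefactor $4\pi e^a/a$ rather than some unspecified $C(\alpha,a)$. In particular one must be careful that the $\alpha$-dependent factors from the $r$ and $R$ integrals, $\int_0^1(r(1-r))^\alpha dr = B(\alpha+1,\alpha+1)$ and $\int_0^1 (1-R)^{2\alpha+1}R^{1/2}dR = B(\frac32,2\alpha+2)$, when combined with the $(II_*)^\alpha$ normalization and the change of variables relating $I',I_*'$ to $(r,R,E)$, conspire to cancel the $\alpha$-dependence; this cancellation is exactly the polyatomic bookkeeping the authors emphasize, so the honest proof will invoke whichever preliminary identity (from Section 2 or from \cite{9,milana,Shahine2023}) packages it. Modulo that constant-chasing, the inequality follows immediately.
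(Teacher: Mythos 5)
Your skeleton is the same as the paper's (cancel the $(II_*)^{\alpha}$ factors, bound the kernel, pull the $(v,I)$-dependent factor out, bound the $\sigma,r,R$ integral by $4\pi$, and absorb the $(v_*,I_*)$ integral into $\|f\|_0$), but the two places where the stated constant $4\pi e^a/a$ is actually produced are left as placeholders, and your guesses about how to fill them point in the wrong direction. First, there is no $\alpha$-cancellation identity to invoke: after the cancellation you already noted, the remaining weight satisfies $(r(1-r))^{\alpha}(1-R)^{2\alpha+1}R^{1/2}\leq 1$ pointwise on $[0,1]^2$, so $\int_{(0,1)^2\times\mathbb{S}^2}(r(1-r))^{\alpha}(1-R)^{2\alpha+1}R^{1/2}\,dR\,dr\,d\sigma\leq 4\pi$ uniformly in $\alpha$; the Beta-function bookkeeping $B(\alpha+1,\alpha+1)$, $B(\tfrac32,2\alpha+2)$ you worry about is simply not needed. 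Second, the factor $e^a/a$ has nothing to do with a Gaussian-type $R$-integral or with the ``$r$-first integration'' trick — that trick is relevant only to $Q^+$, where the exponential weight is evaluated at the post-collisional variables $(v',I')$ and so depends on $r$; in $L(f)$ the weight never touches $r$. The factor comes from the elementary inequality $1+x\leq \frac{e^a}{a}e^{ax}$ (apply $y\leq e^y$ with $y=a(1+x)$), used with $x=\tfrac12|v_*|^2+I_*$ after bounding $\bigl(1+\tfrac12|v_*|^2+I_*\bigr)^{\gamma/2}\leq 1+\tfrac12|v_*|^2+I_*$ (exponent $\leq 1$), which converts the $v_*,I_*$ weight into $\varphi(v_*,I_*)$ and hence the integral into $\|f\|_0$. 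Your ``$\le e^{?}\cdots$'' is exactly this step; without it the proof does not close.

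A further quantitative point: if you start from the split bound $\mathcal{B}\leq |v-v_*|^{\gamma}+(I+I_*)^{\gamma/2}$ and then factorize as you propose, you necessarily pick up extra absolute factors (of order $2$–$4$) and cannot land on $4\pi e^a/a$. The paper instead bounds $E^{\gamma/2}$ directly: $\tfrac14|v-v_*|^2\leq \tfrac12|v|^2+\tfrac12|v_*|^2$, insert $+1$, and use $1+x+y\leq(1+x)(1+y)$ raised to the power $\gamma/2\leq 1$, which factorizes with no loss; then $(1+\tfrac12|v|^2+I)^{\gamma/2}\leq 1+(|v|^2+I)^{\gamma/2}$ by subadditivity of $t\mapsto t^{\gamma/2}$. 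So your argument, once the missing inequality is supplied, proves the lemma only up to an unspecified constant $C(a)$, not the inequality as stated; to get the stated constant you should follow the direct estimate of $E^{\gamma/2}$ rather than the kernel upper bound.
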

\begin{proof}
    Using the fact that $\gamma\leq 1$, and using the inequality $x\leq e^x$ for $x\geq 0$, and $1+x+y\leq (1+x)(1+y)$ for $x,y\geq 0$, we have
    \begin{equation*}
        \begin{aligned}
            L(f)&=\int_{ (0,1)^2 \times \mathbb{S}^2\times\mathbb{R}^{3}\times \mathbb{R}^+ }{f_{*}}E^{\frac{\gamma}{2}} (r(1-r))^{\alpha}(1-R)^{2\alpha+1} R^{\frac{1}{2}} \,dR dr d\sigma dv_*dI_*\\
            &\leq 4\pi \int_{ \mathbb{R}^{3}\times \mathbb{R}^+ }{f_{*}}\left(\frac{1}{2}|v|^2+\frac{1}{2}|v_*|^2+I+I_*\right)^{\frac{\gamma}{2}}\, dv_*dI_*.
            \end{aligned}
            \end{equation*}
            Here, we used $\int_{(0,1)^2\times \mathbb{S}^2}(r(1-r))^{\alpha}(1-R)^{2\alpha+1}R^{\frac{1}{2}}\, dRdrd\sigma\leq 4\pi$. Then, we have
            \begin{equation*}
                \begin{aligned}
            L(f)&\leq 4\pi \int_{ \mathbb{R}^{3}\times \mathbb{R}^+ }{f_{*}}\left(\frac{1}{2}|v|^2+\frac{1}{2}|v_*|^2+I+I_*+1\right)^{\frac{\gamma}{2}}\,   dv_*dI_*\\
            &\leq 4\pi \left(1+\frac{1}{2}|v|^2+I\right)^{\frac{\gamma}{2}} \int_{ \mathbb{R}^{3}\times \mathbb{R}^+ }f_* \left(1+\frac{1}{2}|v_*|^2+I_*\right)^{\frac{\gamma}{2}}\, dv_*dI_* \\
            &\leq 4\pi \left(1+\frac{1}{2}|v|^2+I\right)^{\frac{\gamma}{2}} \int_{ \mathbb{R}^{3}\times \mathbb{R}^+ }f_* \left(1+\frac{1}{2}|v_*|^2+I_*\right)\, dv_*dI_* \\
            &\leq 4\pi \left(\frac{e^a}{a}\right) \left(1+\frac{1}{2}|v|^2+I\right)^{\frac{\gamma}{2}} \int_{ \mathbb{R}^{3}\times \mathbb{R}^+ }f_* \exp\left(a\left(\frac{1}{2}|v_*|^2+I_*\right)\right)\, dv_*dI_*.
            \end{aligned}
            \end{equation*}
            By the definition of $\varphi$, we obtain
            \begin{equation*}
                \begin{aligned}
            L(f)&\leq 4\pi \left(\frac{e^a}{a}\right) \left(1+\frac{1}{2}|v|^2+I\right)^{\frac{\gamma}{2}} \int_{ \mathbb{R}^{3}\times \mathbb{R}^+ }\varphi(v_*,I_*) f_* \, dv_*dI_* \\
            &\leq 4\pi \left(\frac{e^a}{a}\right) \left(1+\left(|v|^2+I\right)^{\frac{\gamma}{2}}\right) \|f\|_0. \\
        \end{aligned}
    \end{equation*}
    In the last line, we used $(x+y)^t\leq x^t+y^t$ for $x,y\geq0$ and $0\leq t\leq1$.
\end{proof}

\section{Estimates of $Q^+$ }\label{normest}
In this section, we provide various estimates of the gain part $Q^+$, which will be crucially employed in the following existence proof. 
\subsection{The estimate of $Q^+$ in $\|\cdot\|_0$ norm:}
We start with $\|\cdot\|_0$ estimate of the gain term. We note that, taking integration in $r$ first plays a key role, which is unobserved in the monatomic case. We also remark that our estimate 
improves the corresponding gain term estimate in the monatomic case \cite{maslova1}, in that we only need $\|\cdot\|_0$ norm of $f$ to control $Q^+$ while the monatomic estimate needed the $\|\cdot\|_0$
and $\|\cdot\|_{-1}$ norm of $f$. This improvement came from considering the case $E<1$ and $E\geq1$ separately, and
using the averaging in the internal energy distribution parameter $r$.

\begin{lemma}\label{nbb}
For $0\leq\gamma\leq 1$, the following inequality holds
    \begin{equation*}
        \|Q^{+}(f,f)\|_0 \leq 4\pi \max\left\{\frac{1}{a},1\right\}\|f\|_0^2. 
    \end{equation*} 
\end{lemma}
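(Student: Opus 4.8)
The plan is to start from the symmetry identity of Lemma \ref{lem6} applied with $\varphi$ replaced by the weighted singular kernel $\varphi(v,I)\,|v-w|^{-0} = \varphi(v,I)$ appearing in the definition of $\|\cdot\|_0$, and then to exploit the structure of the post-collisional weight $\varphi(v',I')$. Concretely, writing out $\varphi(v',I') = \exp\!\big(a(\tfrac12|v'|^2 + I')\big)$ and using $|v'|^2 = \tfrac{|v+v_*|^2}{4} + RE$ together with $I' = r(1-R)E$, the symmetry identity converts $\|Q^+(f,f)\|_0$ into
\begin{equation*}
\sup_w \int_{A\times\mathbb{R}^3\times\mathbb{R}^+} \|ff_*\|_{L^\infty_x}\exp\!\Big(a\Big(\tfrac{|v+v_*|^2}{4}+RE+r(1-R)E\Big)\Big)E^{\frac{\gamma}{2}}(1-R)\,dR\,dr\,d\sigma\,dI_*\,dv_*\,dv\,dI,
\end{equation*}
since the factors $(r(1-r))^\alpha$ and $(I I_*)^\alpha/(I' I_*')^\alpha$ recombine — this is exactly the displayed integral flagged in the introduction. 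Note the $w$-dependence has disappeared, so the supremum is trivial.

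The heart of the argument is then the $r$-integration. The only $r$-dependence is in $\exp(a r(1-R)E)$, and $\int_0^1 e^{a r (1-R)E}\,dr = \frac{e^{a(1-R)E}-1}{a(1-R)E} \le \frac{e^{a(1-R)E}}{a(1-R)E}$. This is where the polyatomic regularization enters: the $(1-R)$ factor out front is cancelled, and we gain a $\frac{1}{E}$ factor. After the $r$-integration the remaining $R$-integral has integrand $\exp(a(1-R)E + aRE) E^{\frac{\gamma}{2}-1} = e^{aE}E^{\frac{\gamma}{2}-1}$ (the $R$-dependence in the exponential telescopes to $e^{aE}$, since $RE + (1-R)E = E$), and $\int_0^1 dR = 1$, while $\int_{\mathbb{S}^2}d\sigma = 4\pi$. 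Combining $e^{aE}\exp(a\tfrac{|v+v_*|^2}{4}) = \exp(a(\tfrac{|v|^2}{2}+\tfrac{|v_*|^2}{2}+I+I_*))$ by the definition of $E$, we reach
\begin{equation*}
\|Q^+(f,f)\|_0 \le \frac{4\pi}{a}\int \|ff_*\|_{L^\infty_x}\exp\!\Big(a\Big(\tfrac{|v|^2}{2}+\tfrac{|v_*|^2}{2}+I+I_*\Big)\Big)E^{\frac{\gamma}{2}-1}\,dI_*\,dv_*\,dv\,dI,
\end{equation*}
which is the second displayed inequality in the introduction.

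Finally I would split the velocity/energy integral according to $E\ge 1$ versus $E<1$. On $\{E\ge 1\}$, since $\tfrac{\gamma}{2}-1 \le 0$ we have $E^{\frac{\gamma}{2}-1}\le 1$, the exponential factorizes as $\varphi(v,I)\varphi(v_*,I_*)$, and the integral is bounded by $\frac{4\pi}{a}\|f\|_0^2$. On $\{E<1\}$, we have $\tfrac14|v-v_*|^2 + I + I_* = E < 1$, and one needs $\int E^{\frac{\gamma}{2}-1}\,dv_* dI_* \lesssim 1$ uniformly — here $E^{\frac{\gamma}{2}-1}$ behaves like $|v-v_*|^{\gamma-2}$ near $v_* = v$ (plus the internal-energy part), which is locally integrable in $\mathbb{R}^3$ for $\gamma > -1$, and the constraint region in $(v_*,I_*)$ is bounded; one gets an absolute constant, and bounding the (bounded) weight by $e^a/$something or simply absorbing it, one arrives at a bound of the form $C\|f\|_0^2$ with $C$ an absolute constant (the "$1$" in $\max\{1/a,1\}$). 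Taking the max of the two regimes gives $4\pi\max\{1/a,1\}\|f\|_0^2$. The main obstacle is the bookkeeping in the $E<1$ case: getting the precise constant to collapse to exactly $4\pi\max\{1/a,1\}$ requires care with how the weight $\exp(a(\tfrac{|v|^2}{2}+\cdots))$ is handled on the bounded region $\{E<1\}$ and how the singular-but-integrable factor $E^{\frac{\gamma}{2}-1}$ is estimated — I would likely bound the exponential on $\{E<1\}$ by $\varphi(v,I)e^{aE}\le \varphi(v,I)e^{a}$ and then integrate $e^a E^{\frac{\gamma}{2}-1}$ against the remaining variables over the bounded set, being careful to re-extract a clean $\|f\|_0$ from the $v$-integral of $\varphi(v,I)\|f(\cdot,v,I)\|_{L^\infty_x}$.
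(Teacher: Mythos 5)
Your treatment of the regime $E\ge 1$ is essentially the paper's argument (symmetry lemma, bound $\tfrac12|v'|^2\le \tfrac{|v+v_*|^2}{4}+RE$, integrate in $r$ to trade the factor $(1-R)$ for $\tfrac{1}{aE}$, then use $E^{\frac{\gamma}{2}-1}\le 1$), up to a small slip: $|v'|^2$ is not \emph{equal} to $\tfrac{|v+v_*|^2}{4}+RE$ (you dropped the cross term $\sqrt{RE}\,(v+v_*)\cdot\sigma$); one needs the inequality $\tfrac12\big(\tfrac{|v+v_*|}{2}+\sqrt{RE}\big)^2\le \tfrac{|v+v_*|^2}{4}+RE$ as in the paper. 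That part is recoverable.

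The genuine gap is your handling of $\{E<1\}$. Because you perform the $r$-integration \emph{globally} before splitting, you are left on $\{E<1\}$ with the singular factor $E^{\frac{\gamma}{2}-1}\sim |v-v_*|^{\gamma-2}$, and your proposed remedy --- integrate this locally integrable singularity over the bounded constraint region to get an absolute constant --- is incompatible with retaining the factor $\|f\|_0^2$. The norms here are $L^1$-type in $(v,I)$: you cannot bound $\|f_*(\cdot,v_*,I_*)\|_{L^\infty_x}$ pointwise by a constant, so you cannot simultaneously integrate $E^{\frac{\gamma}{2}-1}$ in $(v_*,I_*)$ ``for free'' and still recover $\int \varphi_*\|f_*\|_{L^\infty_x}\,dv_*dI_*=\|f\|_0$; the only singular norm available in the paper is $\|\cdot\|_{1-\gamma}$, whose kernel $|v-w|^{-(1-\gamma)}$ is strictly milder than the $|v-v_*|^{-(2-\gamma)}$ singularity you created (and using it would anyway spoil the claimed bound in terms of $\|f\|_0$ alone). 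Your intermediate bound $\varphi\varphi_*\le \varphi(v,I)e^{aE}$ on $\{E<1\}$ is also false, since $\tfrac12|v_*|^2$ is not controlled by $E$ there. The paper's point is precisely to split \emph{before} invoking the $r$-regularization: on $\{E\le 1\}$ one never introduces $1/E$ at all, but simply uses $\mathcal{B}=E^{\frac{\gamma}{2}}\le 1$ together with $\varphi(v',I')\le\varphi(v,I)\varphi(v_*,I_*)$ and $\int (r(1-r))^\alpha(1-R)^{2\alpha+1}R^{1/2}\,dR\,dr\,d\sigma\le 4\pi$, which gives the clean bound $4\pi\|f\|_0^2$; the $r$-trick is reserved for $E>1$, where $E^{\frac{\gamma}{2}-1}\le1$. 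As written, your $E<1$ case does not close, so the constant $4\pi\max\{1/a,1\}$ is not established by your route.
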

\begin{proof}
 We consider two cases  $E\leq 1$ and $E>1$ separately.
\begin{enumerate}
    \item \textit{for $E\leq 1$:} We have from $\varphi(v^{\prime},I^{\prime})\leq \varphi(v,I)\varphi(v_*,I_*)$,
 \begin{equation*}
\begin{aligned}
  \|Q^{+}(f,f)\|_0&=\int \varphi(v,I)\|Q^+(f,f)\|_{L^{\infty}_x}\,dvdI\\
  &= \int \|{f f_{*}}\|_{L^{\infty}_x}\varphi(v',I')\mathcal{B}  (r(1-r))^{\alpha}(1-R)^{2\alpha+1} R^{\frac{1}{2}} \, dR dr d\sigma dI_{*} dv_{*}dvdI\\
  &\leq   \int \|{f f_{*}}\|_{L^{\infty}_x}\varphi(v,I)\varphi(v_*,I_*)  \, dR dr d\sigma dI_{*} dv_{*}dvdI\\
    &\leq   \left(\int \|f \|_{L^{\infty}_x}
    \varphi(v,I)dvdI\right)\left(\int
    \|f_{*}\|_{L^{\infty}_x}\varphi(v_*,I_*)  \,  dI_{*} dv_{*}\right)
 \left(   \int dR dr d\sigma\right)\\
  &\leq 4\pi\|f\|_0^2.
\end{aligned}
\end{equation*}
 \item \textit{for $E> 1$:}
 By Lemma \ref{lem6}, we have 
  \begin{equation*}
      \begin{aligned}
      &\|Q^{+}(f,f)\|_0 \\
      &=\int_{\mathbb{R}^3\times\mathbb{R}^+} \varphi(v,I)\|Q^+(f,f)\|_{L^{\infty}_x}\, dvdI\\
  &= \int_{A\times\mathbb{R}^3\times\mathbb{R}^+} \|{f f_{*}}\|_{L^{\infty}_x}\varphi(v',I')\mathcal{B}  (r(1-r))^{\alpha}(1-R)^{2\alpha+1}R^{\frac{1}{2}}\, dR dr d\sigma dI_{*} dv_{*}dvdI \\
    &\leq   \int_{A\times\mathbb{R}^3\times\mathbb{R}^+} \|{f f_{*}}\|_{L^{\infty}_x}\exp\left(a\left( \frac{1}{2}\left(\frac{v+v_*}{2}+\sqrt{RE} \sigma\right)^{2}+r(1-R)E \right)\right)\mathcal{B}   (1-R)\,  dR dr d\sigma dI_{*} dv_{*}dvdI\\
  &\leq \int_{A\times\mathbb{R}^3\times\mathbb{R}^+} \|{f f_{*}}\|_{L^{\infty}_x}\exp\left(a\left( \frac{1}{2}\left(\frac{|v+v_*|}{2}+\sqrt{RE}\right)^{2}+r(1-R)E \right)\!\right)E^{\frac{\gamma}{2}} (1-R) \,dR dr d\sigma dI_{*} dv_{*}dvdI\\
  &\leq \int_{A\times\mathbb{R}^3\times\mathbb{R}^+} \|{f f_{*}}\|_{L^{\infty}_x}\exp\left(a\left( \frac{|v+v_*|^2}{4}+{RE}+r(1-R)E\right)\!\right)E^{\frac{\gamma}{2}} (1-R)\,  dR dr d\sigma dI_{*} dv_{*}dvdI.
\end{aligned}
\end{equation*}
We then integrate in $r$ first, to find
\begin{equation*}
\begin{aligned}
  &\|Q^{+}(f,f)\|_0 \\
  &\leq \frac{1}{a}  \int \|{f f_{*}}\|_{L^{\infty}_x}\exp\left(a\left(\frac{|v+v_*|^2}{4}+{RE}+(1-R)E\right)\right)\frac{1}{E^{1-\frac{\gamma}{2}}} \,  dR  d\sigma dI_{*} dv_{*}dvdI\\
  &= \frac{1}{a}\int \|{f f_{*}}\|_{L^{\infty}_x}\exp\left(a\left(\frac{|v+v_*|^2}{4}+{E}\right)\right)\frac{1}{E^{1-\frac{\gamma}{2}}} \,  dR d\sigma dI_{*} dv_{*}dvdI\\
  &= \frac{4\pi}{a} \int \|{f f_{*}}\|_{L^{\infty}_x}\exp\left(a\left(\frac{|v+v_*|^2}{4}+{\frac{|v-v_*|^2}{4}+I+I_*}\right)\right)\frac{1}{E^{1-\frac{\gamma}{2}}} \,   dI_{*} dv_{*}dvdI\\
  &\leq \frac{4\pi}{a}\int \|f \|_{L^{\infty_x}}\|f_{*}\|_{L^{\infty}_x}\exp\left(a\left(\frac{|v|^2}{2}+{\frac{|v_*|^2}{2}+I+I_*}\right)\right)\frac{1}{E^{1-\frac{\gamma}{2}}} \,  dI_{*} dv_{*}dvdI\\
  & = \frac{4\pi}{a} \int  \|f \|_{L^{\infty}_x}\|f_{*}\|_{L^{\infty}_x}\varphi (v,I)\varphi (v_*,I_*)\frac{1}{E^{1-\frac{\gamma}{2}}} \,   dI_{*} dv_{*}dvdI.
\end{aligned}
\end{equation*}
 Since $E>1$ and $\gamma\leq 1$, we have $\frac{1}{E^{1-\frac{\gamma}{2}}}\leq 1,
$ and therefore we get the desired estimate. 
$$ \|Q^{+}(f,f)\|_0\leq \frac{4\pi}{a}\|f\|_0^2.$$
\end{enumerate}
\end{proof}
\vspace{-\baselineskip}

\subsection{The estimate of $Q^+$ in $\|\cdot\|_P$ norm:} Now we move on to the estimate of $Q$ restricted on a hyperplane. This estimate naturally arise in the invariance estimate later in the existence proof. 
\begin{lemma}\label{planenorm} For $0\leq\gamma\leq 1$, the following inequality holds
    $$\|Q^{+}(f,g)\|_{P} \leq \max\{\pi,2^{2-\gamma}\}(\|f\|_0\cdot \|g\|_0+\|f\|_0\cdot \|g\|_{1-\gamma}).$$
\end{lemma}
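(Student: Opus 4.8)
The plan is to mimic the structure of the $\|\cdot\|_0$ estimate in Lemma \ref{nbb}, but now testing against the hyperplane measure instead of the full Lebesgue measure in $v$. By definition of $\|\cdot\|_P$, we must bound, uniformly over hyperplanes $P\subset\mathbb{R}^3_v$, the quantity $\int_{\mathbb{R}^+}\int_P \varphi(v,I)\,\|Q^+(f,g)(\cdot,v,I)\|_{L^\infty_x}\,d\pi_{P,v}\,dI$. The first step is to re-express this via the change-of-variables symmetry of the gain term (the hyperplane analogue of Lemma \ref{lem6}): writing $Q^+(f,g)$ in pre-collisional form and pushing the weight $\varphi$ onto the post-collisional argument, we land on an integral of the form $\int \|f f_*\|_{L^\infty_x}\,\varphi(v',I')\,\mathcal{B}\,(\text{Jacobian factors})\,dR\,dr\,d\sigma\,dI_*\,dv_*\,d\pi_{P,v}\,dI$, where now the outer $v$-variable lives on $P$. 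As in Lemma \ref{nbb} I would split into $E\le 1$ and $E>1$, use $\varphi(v',I')\le\varphi(v,I)\varphi(v_*,I_*)$ in the first regime (which decouples the $v$ and $v_*$ integrals, the $v$ integral being two-dimensional on $P$ and controlled by $\|f\|_0$ — here the constant $\pi$ presumably enters through $\int_{(0,1)^2\times\mathbb{S}^2}$ or a residual spherical integral), and in the second regime integrate in $r$ first to produce the $1/E^{1-\gamma/2}$ regularizing factor together with the clean weight $\varphi(v,I)\varphi(v_*,I_*)$, exactly as in Lemma \ref{nbb}.

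The genuinely new point — and the main obstacle — is the $v_*$-integration in the $E>1$ case once we are on a hyperplane. After the $r$-averaging we are left with something like $\frac{4\pi}{a}\int_{\mathbb{R}^+}\int_P\int_{\mathbb{R}^+}\int_{\mathbb{R}^3}\|f\|_{L^\infty_x}\|g_*\|_{L^\infty_x}\,\varphi(v,I)\varphi(v_*,I_*)\,E^{-(1-\gamma/2)}\,dv_*\,dI_*\,d\pi_{P,v}\,dI$. Here the singular factor $E^{-(1-\gamma/2)}\ge (\tfrac14|v-v_*|^2)^{-(1-\gamma/2)} \wedge 1$ is, for the restricted outer variable $v\in P$, \emph{not} harmless: unlike in Lemma \ref{nbb} where $v$ ranged over all of $\mathbb{R}^3$, we can no longer simply bound $1/E^{1-\gamma/2}\le 1$ and separate — because we only have a two-dimensional handle on the $v$-marginal, we must instead keep the full three-dimensional $v_*$ integral and spend the singularity there. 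Concretely I would fix $v\in P$, write $E^{-(1-\gamma/2)}\le |v-v_*|^{-(2-\gamma)}$ when $|v-v_*|\ge$ some threshold and $\le 1$ otherwise, and recognize that $\int_{\mathbb{R}^3}\varphi(v_*,I_*)\,|v-v_*|^{-(2-\gamma)}\,\|g_*\|_{L^\infty_x}\,dv_*$ is exactly (up to the $I_*$-integration) what the \emph{singular norm} $\|g\|_{1-\gamma}$ measures once one notes $2-\gamma = 1+(1-\gamma)$ and an extra power is absorbed; this is the origin of the $\|g\|_{1-\gamma}$ term in the statement, while the non-singular region near $v_*\approx v$ contributes the $\|g\|_0$ term. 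The constant $2^{2-\gamma}$ should come out of the $\frac14$ inside $|v-v_*|^2$ raised to the power $1-\gamma/2$.

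So the key steps, in order, are: (i) reduce to the symmetrized/weight-transferred integral over $A$ using the Jacobian from \cite{9,milana,Shahine2023}; (ii) split on $E\le1$ vs.\ $E>1$; (iii) in the low-energy regime use $\varphi(v',I')\le\varphi\varphi_*$, decouple, and bound the $P$-marginal of $f$ by $\|f\|_0$ and the full $v_*$-integral by $\|g\|_0$, collecting the constant $\pi$; (iv) in the high-energy regime integrate in $r$ first to extract $(aE)^{-1}E^{\gamma/2}$ and the clean weight; (v) split the $v_*$-integral into a near-diagonal piece $|v-v_*|$ small — handled by $E^{-(1-\gamma/2)}\le1$ and $\|g\|_0$ — and a far piece where $E^{-(1-\gamma/2)}\lesssim|v-v_*|^{-(2-\gamma)}$, matched against the definition of $\|g\|_{1-\gamma}$ (using $2-\gamma\ge 1-\gamma$ and that the extra decay of $\varphi_*$ absorbs the surplus power); (vi) combine and take the supremum over $P$ and over the base point. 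The delicate bookkeeping is entirely in step (v): making sure the singular exponent $2-\gamma$ is correctly allocated between the explicit $|v-v_*|^{-(1-\gamma)}$ appearing in the $\|\cdot\|_{1-\gamma}$ norm and the weight, and checking that no additional velocity growth in $v$ (the variable we only control two-dimensionally) survives — which is precisely why the statement needs both $\|g\|_0$ and $\|g\|_{1-\gamma}$ on the right-hand side but only $\|f\|_0$.
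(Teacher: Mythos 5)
Your proposal has two genuine gaps, and they sit exactly where the paper's proof does its real work. First, the reduction in your step (i) is not available as stated: Lemma \ref{lem6} is a change of variables on the \emph{whole} space $\mathbb{R}^3_v\times\mathbb{R}^+_I\times A$, and under the pre/post transformation the constraint ``$v\in P$'' becomes the constraint $v'=\frac{v+v_*}{2}+\sqrt{RE}\,\sigma\in P$, which couples all integration variables; the surface measure $d\pi_{P,v}$ does not simply transport. The paper resolves this by replacing the surface measure with the Gaussian approximation $\sqrt{\alpha/\pi}\,e^{-\alpha d^2(v)}$ of the delta function concentrated on $P$, applying the full-space symmetry lemma, and then carrying out the $\sigma$-integration explicitly: since $d(v')=V+\sqrt{RE}\cos\phi$, the angular integral of the Gaussian is at most $\frac{1}{\sqrt{RE}}\sqrt{\pi/\alpha}$, which cancels the normalization $\sqrt{\alpha}$ and, combined with the factor $\sqrt{R}$ and $\mathcal{B}=E^{\gamma/2}$, leaves $\pi\int\varphi\varphi_*(fg_*+f_*g)E^{-\frac{1-\gamma}{2}}$. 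The regularizing mechanism here is the $\sigma$-integration against the concentrating Gaussian, not the $r$-integration of Lemma \ref{nbb}; and the resulting exponent is $\frac{1-\gamma}{2}$, not $1-\frac{\gamma}{2}$, so for $E\le1$ the bound $E^{-\frac{1-\gamma}{2}}\le 2^{1-\gamma}|v-v_*|^{-(1-\gamma)}$ matches the $\|\cdot\|_{1-\gamma}$ kernel with exactly the right power---there is no ``surplus power'' to absorb (and $\varphi_*$ could not absorb one anyway, since it is an exponentially \emph{growing} weight, not a decaying one).

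Second, and independently, your steps (iii) and (v) bound the two-dimensional quantity $\int_{\mathbb{R}^+}\int_P\varphi(v,I)\|f\|_{L^\infty_x}\,d\pi_{P,v}\,dI$ by $\|f\|_0$. This is false: that quantity is precisely $\|f\|_P$, and a surface integral is not controlled by the corresponding volume integral (consider $f$ concentrated in a thin slab around $P$). So even granting your symmetrization, the scheme you describe would at best yield $\|f\|_P\left(\|g\|_0+\|g\|_{1-\gamma}\right)$, which is not the stated inequality and would make the later choice of $a_4$ circular. This is exactly why the paper keeps both $v$ and $v_*$ as full three-dimensional variables after the Gaussian device and lets the hyperplane localization be dissolved by the $\sigma$-average at the price of the factor $E^{-\frac{1-\gamma}{2}}$; the elementary split $E\ge1$ versus $E\le1$ then gives the two terms $\pi\|f\|_0\|g\|_0$ and $2^{2-\gamma}\|f\|_0\|g\|_{1-\gamma}$.
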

\begin{proof}
Let $P$ be given by $av_1+bv_2+cv_3+d=0$. For an arbitrary point $v\in\mathbb{R}^3$, we define $d(v)$ to be the distance from $v$ to the plane $P$:
\[
d(v)=
\frac{av_1+bv_2+cv_3+d}{\sqrt{a^2+b^2+c^2}}
\]
To derive an estimate $Q^+$ restricted on the hyperplane $P$, 
 In order to bound $\|Q^+\|_P$, we introduce a sequence $\left(\frac{\alpha}{\pi}\right)^{\frac{1}{2}}e^{-\alpha d^2 (v)}$ that converges to the Dirac delta function concentrating on $P$ as $\alpha$ goes to infinity.

  \begin{align*}
    &\int \varphi(v,I)\left(\frac{\alpha}{\pi}\right)^{\frac{1}{2}}e^{-\alpha d^2 (v)}Q^{+}(f,g) \, dv dI \\
    &= \frac{1}{2} \int \varphi(v',I') \sqrt{\frac{\alpha}{\pi}} \exp\left(-\alpha\left( \frac{av_1'+bv_2'+cv_3'+d}{\sqrt{a^2+b^2+c^2}}\right)^2\right) {(fg_{*}+f_{*}g)} \\
  &\hspace{3cm}\mathcal{B}\sqrt{R}(r(1-r))^{\alpha}(1-R)^{2\alpha+1}\, dv dv_{*} dI dI_{*} dr dR d\sigma\\
    &\leq \frac{1}{2}\int \varphi(v',I') \sqrt{\frac{\alpha}{\pi}} \exp\left(-\alpha\left( \frac{(a,b,c)\cdot\frac{v+v_*}{2}+\sqrt{RE}(a,b,c)\cdot \sigma+d}{\sqrt{a^2+b^2+c^2}}\right)^2\right) {(fg_{*}+f_{*}g)}  \\
  &\hspace{3cm}\mathcal{B}\sqrt{R}(r(1-r))^{\alpha}(1-R)^{2\alpha+1}\, dv dv_{*} dI dI_{*} dr dR d\sigma
    \\
    &\leq \pi\int \varphi\varphi_*{(fg_{*}+f_{*}g)}\\
    &\;\;\;\sqrt{\frac{\alpha}{\pi}}\int \exp\left( -\alpha\left(\frac{1}{\sqrt{a^2+b^2+c^2}}\left[a\frac{v_1+v_{*}{}_{1}}{2}+b \frac{v_2+v_{*}{}_{2}}{2}+c\frac{v_3+v_{*}{}_{3}}{2}+d\right]+\sqrt{RE}\cos\phi \right)^2 \right)  \\
  &\hspace{3cm}  \sqrt{R}E^{\frac{\gamma}{2}}\sin\phi \, d\phi dr dRdvdv_*dIdI_*
    \end{align*}
Here, $\phi$ denotes the angle between the fixed vector $(a,b,c)$ and $\sigma$. For simplicity, we denote
\[
V=\frac{1}{\sqrt{a^2+b^2+c^2}}\left[a\frac{v_1+v_{*}{}_{1}}{2}+b \frac{v_2+v_{*}{}_{2}}{2}+c\frac{v_3+v_{*}{}_{3}}{2}+d\right]
\]
and carry out integration over $\phi$ first to get
\begin{align*}
&\int \varphi(v,I)\left(\frac{\alpha}{\pi}\right)^{\frac{1}{2}}e^{-\alpha d^2 (v)}Q^{+}(f,g) \, dv dI \\
    &=\sqrt{\pi\alpha}\int \varphi\varphi_*{(fg_{*}+f_{*}g)}\left\{\int_0^{\pi} \exp\left(-\alpha\left(V+\sqrt{RE}\cos \phi\right)^2 \right) \sin \phi \,d\phi\right\}\sqrt{R}E^{\frac{\gamma}{2}} dr dRdvdv_*dIdI_*  \\
    &= \sqrt{\pi\alpha}\int \varphi\varphi_*{(fg_{*}+f_{*}g)}\left\{\frac{1}{\sqrt{RE}}\int_{V-\sqrt{RE}}^{V+\sqrt{RE}} e^{-\alpha t^2} \,dt\right\}\sqrt{R}E^{\frac{\gamma}{2}} dr dRdvdv_*dIdI_*  \\
    &\leq \sqrt{\pi\alpha}\int \varphi\varphi_*{(fg_{*}+f_{*}g)}\left\{\frac{1}{\sqrt{RE}} \int_{\mathbb{R}} e^{-\alpha t^2} \,dt\right\}\sqrt{R}E^{\frac{\gamma}{2}} dr dRdvdv_*dIdI_*  \\
    &= \sqrt{\pi\alpha}\int \varphi\varphi_*{(fg_{*}+f_{*}g)}\left\{\frac{1}{\sqrt{RE}}\sqrt{\frac{\pi}{\alpha}}\right\} \sqrt{R}E^{\frac{\gamma}{2}} dr dRdvdv_*dIdI_*\\
    &= \pi\int \varphi\varphi_*{(fg_{*}+f_{*}g)} \frac{1}{E^{\frac{1-\gamma}{2}}} \,dvdv_*dIdI_*.
\end{align*}
If {$E\geq 1$}, this yields
   \begin{equation}\label{ul1}
\begin{aligned}
    \int \varphi(v,I)\left(\frac{\alpha}{\pi}\right)^{\frac{1}{2}}e^{-\alpha d^2 (v)}Q^{+}(f,g) \,dv dI &\leq \pi \int \varphi \varphi_*(fg_{*}+f_{*}g)\, dv dv_{*} dI dI_{*} \\
    &\leq \pi \|f\|_0\cdot\|g\|_0.
\end{aligned}
\end{equation}  
On the other hand, in the case $E\leq 1$,  we have
\begin{equation}\label{dfd}
    \begin{aligned}
    \int \varphi_{\alpha}(v,I)Q^{+}(f,g) \,dv dI 
    &\leq 2^{1-\gamma}\left(\sup_{\omega}\int \varphi_* g_{*}\frac{1}{|v_*+\omega|^{_{1-\gamma}}}\,dv_*dI_*\right)\int \varphi f \,dvdI\\
  &+2^{1-\gamma}\left(\sup_{\omega}\int \varphi g \frac{1}{|v+\omega|^{1-\gamma}}\,dvdI\right)\int \varphi_* f_*\, dv_*dI_*\\
    &\leq 2^{2-\gamma}\|f\|_0\cdot\|g\|_{1-\gamma} 
\end{aligned}
\end{equation}
Finally, we combine \eqref{ul1} and \eqref{dfd}, 
take the limit $\alpha \rightarrow \infty$, and take the supremum  over $P$ we get the desired estimate:  
\begin{equation*}
    \begin{aligned}
       \|Q^+(f,f)\|_P&=\sup_P\lim_{\alpha\rightarrow \infty} \int \varphi(v,I)\left(\frac{\alpha}{\pi}\right)^{\frac{1}{2}}e^{-\alpha d^2 (v)}Q^{+}(f,f) \,dv dI\\ 
       &\leq {\max\{\pi,2^{2-\gamma}\}(\|f\|_0\cdot\|g\|_0+ \|f\|_0\cdot\|g\|_{1-\gamma}).}
    \end{aligned}
\end{equation*}
\end{proof}
\subsection{The estimate of $Q^+$ in $\|\cdot\|_{1-\gamma}$ norm:}
We note in the previous lemma that $\|\cdot\|_{1-\gamma}$ estimate shows up in the estimate of $\|Q^+(f,f)\|_{P}$. Therefore, we need to estimate the collision operator in $\|\cdot\|_{1-\gamma}$ norm for the sake of invariance properties in the existence proof.
We also remark that, unlike the monatomic Boltzmann case, using the averaging over the internal energy distribution parameter $r$ plays a crucial role in the derivation of the estimate.
\begin{lemma}\label{singular norm}
For $1/2\leq \gamma\leq 1$, $Q^+$ satisfies
\begin{align*}
\|Q^+(f,f)\|_{1-\gamma}\leq \frac{16\pi}{(1+\gamma)^2}\max\left\{1,\frac{1}{a}\right\}\|f\|_0^2
\end{align*}
\end{lemma}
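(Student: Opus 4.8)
The plan is to mimic the structure of the $\|\cdot\|_0$ estimate in Lemma \ref{nbb}, but now with the extra singular weight $|v-w|^{-(1-\gamma)}$ coming into play. We start from
\[
\|Q^+(f,f)\|_{1-\gamma}=\sup_w\int \varphi(v,I)\frac{1}{|v-w|^{1-\gamma}}\|Q^+(f,f)\|_{L^\infty_x}\,dvdI,
\]
and use the symmetry identity of Lemma \ref{lem6} to shift the weight onto the post-collisional variable $v'$:
\[
\int \varphi(v,I)\frac{1}{|v-w|^{1-\gamma}}\|Q^+(f,f)\|_{L^\infty_x}dvdI
=\int_{A\times\mathbb R^3\times\mathbb R^+}\|ff_*\|_{L^\infty_x}\varphi(v',I')\frac{1}{|v'-w|^{1-\gamma}}\mathcal B\,dBdvdI.
\]
Now the key point: the $\sigma$-integral of $|v'-w|^{-(1-\gamma)}$ is exactly what Lemma \ref{lem1} controls, producing a factor $\frac{8\pi}{(1+\gamma)}(RE)^{-\frac{1-\gamma}{2}}$. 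Since $\mathcal B=E^{\gamma/2}$ and $dB$ carries the factor $(1-R)^{2\alpha+1}R^{1/2}(r(1-r))^\alpha(II_*)^\alpha dRdrd\sigma dI_*dv_*$, the $R^{1/2}$ from the measure cancels the $R^{-(1-\gamma)/2}$ partially, leaving $R^{\gamma/2}(1-R)^{2\alpha+1}$, which is bounded. After pulling out $\varphi(v',I')\le \varphi(v,I)\varphi(v_*,I_*)$ (for the $E\le 1$ part) or keeping it explicit (for the $E\ge 1$ part), we again split into the two regimes $E\le 1$ and $E>1$ exactly as in Lemma \ref{nbb}.

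For $E\le 1$: after the $\sigma$-integral via Lemma \ref{lem1}, I bound $\varphi(v',I')\le\varphi(v,I)\varphi(v_*,I_*)$, use $E^{\gamma/2}(RE)^{-(1-\gamma)/2}R^{1/2}=R^{\gamma/2}E^{(2\gamma-1)/2}\le 1$ (here $\gamma\ge 1/2$ is needed so that $E^{(2\gamma-1)/2}\le 1$ when $E\le1$, and $R^{\gamma/2}\le1$), and the remaining $r$- and $R$-integrals against $(r(1-r))^\alpha(1-R)^{2\alpha+1}$ are finite; this yields a bound by a constant times $\|f\|_0^2$. For $E>1$: as in Lemma \ref{nbb}, I carry out the $r$-integration first, exploiting the polyatomic regularization — $\int_0^1 e^{ar(1-R)E}dr\le \frac{1}{a(1-R)E}e^{a(1-R)E}$ — which converts one power of $E$ in the denominator; combined with the factor $(RE)^{-(1-\gamma)/2}$ from Lemma \ref{lem1} and $E^{\gamma/2}$ from $\mathcal B$, and using $E>1$ together with $\gamma\ge 1/2$ to absorb the residual powers of $E$ into $E^{-\delta}\le 1$, everything collapses to $\varphi(v,I)\varphi(v_*,I_*)$ against $dI_*dv_*dvdI$, giving $\frac{C}{a}\|f\|_0^2$. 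Tracking the numerical constants ($8\pi$ from Lemma \ref{lem1}, another $8\pi/(1+\gamma)$ or $4\pi$ from further $\sigma$ or $R$ integrations, the $1/a$) should produce $\frac{16\pi}{(1+\gamma)^2}\max\{1,1/a\}$.

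The main obstacle — and the reason the range shrinks to $1/2\le\gamma\le1$ — is the integrability bookkeeping in the $E\le1$ regime: after applying Lemma \ref{lem1} one is left with a factor $R^{1/2}\cdot R^{-(1-\gamma)/2}=R^{(2\gamma-1)/2}$ and a factor $E^{\gamma/2}\cdot E^{-(1-\gamma)/2}=E^{(2\gamma-1)/2}$, and for $E\le 1$ the bound $E^{(2\gamma-1)/2}\le 1$ requires $\gamma\ge 1/2$; for $\gamma<1/2$ this factor blows up as $E\to0$ and is not integrable near the diagonal $v=v_*$ with $I=I_*=0$. So the delicate step is to verify that with $\gamma\ge1/2$ all the exponents of $R$ and $E$ that survive are nonnegative (or at least integrable), and to confirm the $r$-integration trick in the $E>1$ case still leaves enough decay in $E$ after the singular $\sigma$-integral has consumed $E^{-(1-\gamma)/2}$. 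Once those exponent inequalities are checked, the rest is routine Gaussian/polynomial integration as in the previous two lemmas.
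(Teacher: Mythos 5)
Your proposal is correct and follows essentially the same route as the paper's proof: the symmetry identity of Lemma \ref{lem6} to shift the weight onto $v'$, the $\sigma$-integration of Lemma \ref{lem1}, the split into $E\le 1$ (where $E^{(2\gamma-1)/2}\le 1$ uses $\gamma\ge 1/2$) and $E>1$ (where integrating in $r$ first yields the $\frac{1}{a(1-R)E}$ regularization), and the final collapse to $\varphi(v,I)\varphi(v_*,I_*)$ giving the bound by $\|f\|_0^2$ with a constant within $\frac{16\pi}{(1+\gamma)^2}\max\{1,\frac{1}{a}\}$. Only cosmetic slips: $R^{1/2}\cdot R^{-(1-\gamma)/2}=R^{\gamma/2}$ rather than $R^{(2\gamma-1)/2}$, and in the $E>1$ regime the residual power is $E^{\gamma-3/2}\le 1$ for all $\gamma\le 1$, so the restriction $\gamma\ge 1/2$ is genuinely needed only in the $E\le 1$ case --- neither point affects the validity of the argument.
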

\begin{proof}
When $E\leq 1$, we have
   \begin{equation*}
    \begin{aligned}
     \|Q^{+}(f,f)\|_{1-\gamma}&\leq \int \frac{1}{|v'+w|^{1-\gamma}} \varphi(v',I') ff_{*} \mathcal{B}\, dv dv_{*} dI dI_{*} dr dR d\sigma \\
    &\leq\int \frac{1}{|v'+w|^{1-\gamma}} \varphi \varphi_* ff_{*} E^{\frac{\gamma}{2}}\, dv dv_{*} dI dI_{*} dr dR d\sigma .
   \end{aligned}
   \end{equation*}
Taking integration in $\sigma$ first and applying Lemma \ref{lem1}, we get
 \begin{equation*}
    \begin{aligned}
     \|Q^{+}(f,f)\|_{1-\gamma}
    &\leq \frac{8\pi}{\gamma+1}\int  \varphi \varphi_* ff_{*} E^{\frac{2\gamma-1}{2}}\frac{1}{{R^{\frac{1-\gamma}{2}}}} \,dv dv_{*} dI dI_{*}  dR .
   \end{aligned}
   \end{equation*}
Since $2\gamma-1\geq 0$, we have $E^{\frac{2\gamma-1}{2}}\leq 1$, therefore
 \begin{equation*}
    \begin{aligned}
     \|Q^{+}(f,f)\|_{1-\gamma}
    &\leq \frac{8\pi}{\gamma+1}\int  \varphi \varphi_* ff_{*} \frac{1}{{R^{\frac{1-\gamma}{2}}}} \,dv dv_{*} dI dI_{*}  dR \\
     &\leq \frac{8\pi}{\gamma+1}\int  \varphi \varphi_* ff_{*} \left\{\int\frac{1}{{R^{\frac{1-\gamma}{2}}}}dR\right\} \,dv dv_{*} dI dI_{*}\\
    &\leq \frac{16\pi}{(1+\gamma)^2}\|f\|_0^2.
   \end{aligned}
   \end{equation*}
\noindent In the case $E\geq 1$, we recall \eqref{pre-post velocity} and employ Lemma     \ref{lem1} to compute
     \begin{equation*}
    \begin{aligned}
     &\hspace{-4cm}\|Q^{+}(f,f)\|_{1-\gamma} \\
     &\hspace{-4cm}\leq\int \frac{1}{|v'+w|^{1-\gamma}} \varphi(v',I') ff_{*} \mathcal{B}\sqrt{R}(1-R)^{2\alpha+1} \, dv dv_{*} dI dI_{*} dr dR d\sigma \\
     \end{aligned}
     \end{equation*}
     \begin{equation*}
         \begin{aligned}
    &\leq\int \left\{\int_{\mathbb{S}^2} \frac{1}{|v'+w|^{1-\gamma}} \,d\sigma\right\} \exp\left(a\left(\frac{1}{2}\left(\frac{|v+v_*|}{2}+\sqrt{RE}\right)^2+r(1-R)E\right)\right) \;ff_{*} E^{\frac{\gamma}{2}}\sqrt{R}(1-R) \\
  &\hspace{3cm}\,dv dv_{*} dI dI_{*} dr dR\\
     &=\int \left\{\frac{8\pi}{\gamma+1}\frac{1}{(RE)^{\frac{1-\gamma}{2}}}\right\} \exp\left(a\left(\frac{1}{2}\left(\frac{|v+v_*|}{2}+\sqrt{RE}\right)^2+r(1-R)E\right)\right) ff_{*} E^{\frac{\gamma}{2}}\sqrt{R}(1-R) \\
  &\hspace{3cm} \,dv dv_{*} dI dI_{*} dr dR   \\
    &= \frac{8\pi}{\gamma+1} \int \exp\left(a\left(\frac{1}{2}\left(\frac{|v+v_*|}{2}+\sqrt{RE}\right)^2+r(1-R)E\right)\right) \;ff_{*} \frac{E^{\frac{\gamma}{2}}}{E^{\frac{1-\gamma}{2}}}\frac{(1-R)}{R^{\frac{1-\gamma}{2}}} \\
  &\hspace{3cm}\,dv dv_{*} dI dI_{*} dr dR.\\   
   \end{aligned}
   \end{equation*}
 Since $E\geq 1$, we have $\frac{1}{E^{\frac{3}{2}-\gamma}}\leq 1$. Therefore, carrying out the integration in $r$ first, we obtain
 \begin{equation*}
 \begin{aligned}
     &\|Q^{+}(f,f)\|_{1-\gamma} \\
     &\leq \frac{8\pi}{a(1+\gamma)} \int \exp\left(a\left(\frac{1}{2}\left(\frac{|v+v_*|}{2}+\sqrt{RE}\right)^2+(1-R)E\right)\right) \;ff_{*} \frac{1}{(1-R)E}\textcolor{black}{\frac{1}{E^{\frac{1}{2}-\gamma}}}\frac{(1-R)}{R^{\frac{1-\gamma}{2}}}\, dv dv_{*} dI dI_{*}  dR \\
    \hspace{-3cm}&= \frac{8\pi}{a(1+\gamma)} \int \exp\left(a\left(\frac{1}{2}\left(\frac{|v+v_*|}{2}+\sqrt{RE}\right)^2+(1-R)E\right)\right) \;ff_{*} \textcolor{black}{\frac{1}{E^{\frac{3}{2}-\gamma}}}\frac{1}{R^{\frac{1-\gamma}{2}}}\, dv dv_{*} dI dI_{*}  dR \\
    &\leq \frac{8\pi}{a(1+\gamma)} \int \exp\left(a\left(\left(\frac{|v+v_*|^2}{4}+ RE\right) +(1-R)E\right)\right) \;ff_{*} \textcolor{black}{\frac{1}{E^{{\frac{3}{2}-\gamma}}}} \frac{1}{R^{\frac{1-\gamma}{2}}}\,dRdv dv_{*} dI dI_{*}  \\
    &\leq \frac{8\pi}{a(1+\gamma)} \int \exp\left(a\left(\frac{|v+v_*|^2}{4}+{RE}+(1-R)E\right)\right) \;ff_{*}\frac{1}{R^{\frac{1-\gamma}{2}}} \, dRdv dv_{*} dI dI_{*} \\
    &= \frac{8\pi}{a(\gamma+1)} \int \exp\left(a\left(\frac{|v+v_*|^2}{4}+{E}\right)\right) \;ff_{*} \left\{
    \int^1_0 \frac{1}{R^{\frac{1-\gamma}{2}}}dR
    \right\}dv dv_{*} dI dI_{*}\\
    &= \frac{16\pi}{a(\gamma+1)^2} \int \varphi(v,I)\varphi(v_*,I_*) \;ff_{*} \,dv dv_{*} dI dI_{*}\\
    &\leq \frac{16\pi}{a(\gamma+1)^2} \|f\|_0^2.
   \end{aligned}
   \end{equation*}
\end{proof}

\section{Invariance of the solution map $\Psi$}
\label{invariance}

In this section, we will provide the necessary proofs for the existence. We will first prove invariance, and then establish contraction, both of which are required to apply the Banach fixed point theorem.

For this, we define our solution space $\mathcal{A}$ by
   \begin{align*}
        \mathcal{A}=\left\{ f \in L^1(\mathbb{R}^3_v\times \mathbb{R}^+_I;L^{\infty}_x(0,1)): f \geq 0,\|f\|_0 \leq a_{1}, L(f) \geq a_{2}\left(1+\left(|v|^2+I\right)^{\frac{\gamma}{2}}\right),\|f\|_{1-\gamma} \leq a_{3},\|f\|_{P} \leq a_{4}\right\}.
    \end{align*}
 The value of $a_i$ $(i=1,2,3,4)$ will be clarified in the proof.   
We now define the solution map $\Psi:\mathcal{A}\rightarrow \mathcal{A}$ as follows:
\[
\Psi(f)=\Psi^+(f)\mathbbm{1}_{v_1>0}
+\Psi^-(f)\mathbbm{1}_{v_1<0},
\]
where
 \begin{align}\label{solution map}
 \begin{split}
     \Psi^+(f)(x,v,I)&=\exp \left(-\frac{\varepsilon}{ |v_{1}|} \int_{0}^{x} L(f)(y, v,I) \,d y\right)f_L(v,I)\\
  &+\frac{\varepsilon}{ |v_{1}|} \int_{0}^{x}\exp \left(-\frac{\varepsilon}{ |v_{1}|} \int_{y}^{x} L(f)(z, v,I) \,d z\right)Q^+(f,f)\,dy\qquad (v_1>0)\\
    \Psi^-(f)(x,v,I)&=\exp \left(-\frac{\varepsilon}{ |v_{1}|} \int_{x}^{1} L(f)(y, v,I)\, d y\right)f_R(v,I)\\
  &+\frac{\varepsilon}{ |v_{1}|} \int_{x}^{1}\exp \left(-\frac{\varepsilon}{ |v_{1}|} \int_{x}^{y} L(f)(z, v,I) \,d z\right)Q^+(f,f)\,dy\qquad (v_1<0).
  \end{split}
	\end{align}

In this section, we  prove that the solution map $\Psi$ is invariant in $\mathcal{A}$. 

\begin{proposition} Let $1/2\leq \gamma\leq 1$. Assume $f_{LR}$ satisfies \eqref{ibc1}. Then $\Psi$ is invariant in $\mathcal{A}$. That is, $\Psi(f)\in \mathcal{A}$ for $f\in\mathcal{A}$.    
\end{proposition}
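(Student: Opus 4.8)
The plan is to verify, one at a time, the four defining conditions of $\mathcal{A}$ for $\Psi(f)$ when $f\in\mathcal{A}$, choosing the constants $a_1,\dots,a_4$ and $\varepsilon$ in the right order. Throughout, the two building blocks are: (i) the integrating-factor structure of \eqref{solution map}, whereby each of $\Psi^\pm(f)$ is a sum of a "head term" (boundary data times an exponential damping factor bounded by $1$) and an "integral term" involving $Q^+(f,f)$ damped by an exponential; and (ii) the already-established norm estimates on $Q^+$ in $\|\cdot\|_0$ (Lemma \ref{nbb}), $\|\cdot\|_P$ (Lemma \ref{planenorm}), $\|\cdot\|_{1-\gamma}$ (Lemma \ref{singular norm}), together with the upper bound on $L$ (Lemma \ref{lemma6rs}) and the lower bound $L(f)\ge a_2(1+(|v|^2+I)^{\gamma/2})$ available for $f\in\mathcal{A}$.

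First I would treat the $\|\cdot\|_0$ bound. Apply $\varphi$-weighting and the $L^\infty_x$-norm to \eqref{solution map}. The head term contributes at most $\|f_{LR}\|_0$ since the exponential factor is $\le 1$. For the integral term, bound the exponential damping by $1$ and the $y$-integral by its length $\le 1$, picking up a factor $\varepsilon/|v_1|$; but the $1/|v_1|$ growth must be absorbed. Here is where the hypothesis \eqref{ibc1}, specifically $\||v_1|^{-1}(1+(|v|^2+I)^{\gamma/2})f_{LR}\|_0<\infty$, and the strong lower bound on $L$ enter: rather than crudely bounding the exponential by $1$, one uses $\int_y^x L(f)\,dz \gtrsim a_2(x-y)(1+(|v|^2+I)^{\gamma/2})$ so that $\frac{\varepsilon}{|v_1|}\int_0^x e^{-\frac{\varepsilon}{|v_1|}\int_y^x L}\,dy \le \frac{1}{a_2(1+(|v|^2+I)^{\gamma/2})}\big(1-e^{-\frac{\varepsilon}{|v_1|}\int_0^x L}\big)\le \frac{1}{a_2(1+(|v|^2+I)^{\gamma/2})}$, which is bounded and in particular removes the need for any $1/|v_1|$ control of $Q^+$. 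Combined with Lemma \ref{nbb} ($\|Q^+(f,f)\|_0\le 4\pi\max\{1/a,1\}a_1^2$) this gives $\|\Psi(f)\|_0\le \|f_{LR}\|_0 + \frac{1}{a_2}\cdot 4\pi\max\{1/a,1\}a_1^2$, and one requires this to be $\le a_1$ — achievable by first fixing $a_1$ large relative to $\|f_{LR}\|_0$ (say $a_1 = 2\|f_{LR}\|_0$) and then noting the quadratic term can be made small by choosing $\varepsilon$ small, since in fact the $\varepsilon$ appears once one does not use the crude $L$-lower bound but keeps the explicit $\varepsilon$ factor; I would present the cleanest route, which is to keep one factor of $\varepsilon$ explicit (bounding the other pieces by constants) so that $\|\Psi(f)\|_0\le \|f_{LR}\|_0 + C\varepsilon a_1^2 \le a_1$ for $\varepsilon$ small.

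Next, the $\|\cdot\|_{1-\gamma}$ and $\|\cdot\|_P$ bounds follow the same template: head term $\le \|f_{LR}\|_{1-\gamma}$ resp. $\le \|f_{LR}\|_P$ (finite by $|||f_{LR}|||<\infty$), and integral term bounded using Lemma \ref{singular norm} resp. Lemma \ref{planenorm}, each producing a term of the form $C\varepsilon$ times a product of norms of $f$ (all controlled by $a_1,a_3,a_4$), hence $\le a_3$ resp. $\le a_4$ for $\varepsilon$ small once $a_3,a_4$ are fixed appropriately relative to $\|f_{LR}\|_{1-\gamma},\|f_{LR}\|_P$. One subtlety: for $\|\cdot\|_{1-\gamma}$ and $\|\cdot\|_P$ the weight in the norm involves a singular factor $|v-w|^{-(1-\gamma)}$ or a hyperplane projection rather than a plain integral, so one must check that these norms interact well with the mild form — but since these norms are applied to $\|\Psi(f)(\cdot,v,I)\|_{L^\infty_x}$ and the head/integral decomposition is pointwise in $(v,I)$, the triangle inequality in the respective norm applies directly and the $Q^+$-estimates are used verbatim. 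I would state the sign and monotonicity facts (nonnegativity of $\Psi(f)$, since $f\ge0 \Rightarrow Q^+(f,f)\ge0$ and all kernels are nonnegative) at the outset as an easy observation.

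The genuinely hard step — and the one I would allocate most space to — is the $L(\Psi(f))\ge a_2(1+(|v|^2+I)^{\gamma/2})$ condition, because this is not an a priori estimate (as the authors emphasize after the statement of Theorem \ref{main theorem}) and requires using that $f$ already lies in $\mathcal{A}$. The strategy is: $L(\Psi(f))(v,I) = \int_A \frac{(\Psi f)_*}{(II_*)^\alpha}\mathcal{B}\,dB \ge$ (lower bound on $\mathcal{B}$, namely $\frac{1}{2^{\gamma/2+1}}(|v-v_*|^\gamma + (I+I_*)^{\gamma/2})$) times an integral of $(\Psi f)(x,v_*,I_*)$ over a suitable region of $v_*,I_*$. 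To get the decisive factor $(|v|^2+I)^{\gamma/2}$ on the right, restrict the $v_*$-integration to a bounded set (say $|v_*|\le 1$, $I_*\le 1$) where $|v-v_*|^\gamma \gtrsim |v|^\gamma$ for $|v|$ large and $(I+I_*)^{\gamma/2}\gtrsim I^{\gamma/2}$, so $\mathcal{B}\gtrsim (|v|^2+I)^{\gamma/2}$ up to constants on that set, plus a clean constant lower bound near the origin; then one needs a lower bound $\int_{|v_*|\le1,\,I_*\le1}(\Psi f)(x,v_*,I_*)\,dv_*dI_* \ge c>0$ uniform in $x$. This last positivity-propagation is obtained from the mild form: $(\Psi f)(x,v_*,I_*)\ge e^{-\frac{\varepsilon}{|v_{*1}|}\int L(f)}f_{LR}(v_*,I_*)\ge e^{-C\varepsilon/|v_{*1}|}f_{LR}(v_*,I_*)$ using the upper bound $L(f)\le 4\pi\frac{e^a}{a}(1+(|v_*|^2+I_*)^{\gamma/2})a_1$ from Lemma \ref{lemma6rs} (bounded on the compact set), and one needs the structural assumption that $f_{LR}$ is not a.e. zero on a set of positive measure inside $\{|v_*|\le1,\,I_*\le1\}$ — which should either be part of the standing hypotheses on the boundary data or be derivable; I would flag this as the place where a mild additional assumption on $f_{LR}$ (strict positivity on a cone of incoming velocities, as is standard in this line of work going back to Maslova) is needed, and then $a_2$ is chosen as this explicit constant $c$ divided by the relevant geometric constants — crucially $a_2$ depends only on the boundary data and not on the iterate, which is exactly why invariance closes. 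I would end by collecting the finitely many smallness requirements on $\varepsilon$ and taking $\varepsilon$ below their minimum.
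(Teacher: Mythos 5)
Your overall architecture (head/integral decomposition, the $Q^+$ norm lemmas, propagation of the lower bound of $L$ through the head term and the positivity of the boundary data, fixing $a_1,\dots,a_4$ and then shrinking $\varepsilon$) matches the paper, but the proposal has a genuine gap at the single most delicate point, the $\|\cdot\|_0$ invariance. Neither of the two routes you offer for the integral term closes. Route (a): using $L(f)\geq a_2$ to absorb the $1/|v_1|$ gives only $\|\Psi(f)\|_0\leq \|f_{LR}\|_0+\tfrac{1}{a_2}\,4\pi\max\{1/a,1\}a_1^2$, with no $\varepsilon$ left; this need not be $\leq a_1$, and taking $a_1$ large does not help since the error is quadratic in $a_1$ and $a_2$ itself degrades as $a_1$ grows (it enters the exponentials defining $C_1,C_2$ in Lemma \ref{invariance L lower bound}). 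Route (b): "keeping one factor of $\varepsilon$ explicit and bounding the other pieces by constants" leaves the weight $\varepsilon/|v_1|$, which is not integrable in $v_1$ near $v_1=0$, and no norm in the solution space controls $\int \varphi\,|v_1|^{-1}\|Q^+(f,f)\|_{L^\infty_x}\,dv\,dI$; so the claimed bound $\|f_{LR}\|_0+C\varepsilon a_1^2$ is not obtainable this way. The paper's resolution, which your proposal omits, is to split the $v_1$-integration at $|v_1|=1/\varepsilon$: on the small-velocity region one keeps the damping factor, uses $\int_{|v_1|\leq 1/\varepsilon}\bigl(1-e^{-a_2\varepsilon/|v_1|}\bigr)dv_1\lesssim \varepsilon+ a_2\varepsilon\ln(1/\varepsilon)$ (estimate \eqref{app}) and controls the remaining $(v_2,v_3,I)$-integral uniformly in $v_1$ by the hyperplane norm $\|Q^+(f,f)\|_P$ of Lemma \ref{planenorm} --- this is precisely why $\|\cdot\|_P$ sits in the solution space --- while on $|v_1|>1/\varepsilon$ one uses $\varepsilon/|v_1|\leq\varepsilon^2$ and Lemma \ref{nbb}. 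Without this splitting the smallness in $\varepsilon$ that your argument invokes is unsubstantiated. (Your parallel claim of a clean $C\varepsilon$ factor in the $\|\cdot\|_{1-\gamma}$ and $\|\cdot\|_P$ steps is likewise not correct, but there it is harmless: no $\varepsilon$-smallness is needed, one simply defines $a_3,a_4$ to absorb the $\varepsilon$-free bounds $\tfrac{1}{a_2}\|Q^+\|_{1-\gamma}$, $\tfrac{1}{a_2}\|Q^+\|_P$, as the paper does.)

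There is a second, smaller gap in your treatment of $L(\Psi(f))\geq a_2(1+(|v|^2+I)^{\gamma/2})$. Restricting to the single bounded region $\{|v_*|\leq1,\,I_*\leq1\}$ and asserting "a clean constant lower bound near the origin" fails pointwise: for $(v,I)$ near the origin the kernel $E^{\gamma/2}=(\tfrac14|v-v_*|^2+I+I_*)^{\gamma/2}$ has no positive lower bound on that region, since it vanishes as $v_*\to v$, $I_*\to0$. The paper avoids this by a second region: for $|v|<2$ it integrates over $|v_*|\geq4$, where $|v-v_*|\geq2$ forces $E\geq1$, yielding the constant $C_2$; the large-$(v,I)$ growth comes from the region $|v_*|\leq1$ (constant $C_1$). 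Your argument can be repaired the same way (or by a continuity/compactness argument on a compact set of $(v,I)$), but as written the constant part of the lower bound is not justified. Your flag that strict positivity of $f_{LR}$ on suitable velocity sets is needed is fair and is indeed implicit in the paper's assertion $a_2=c_\alpha\min\{4^{-\gamma}C_1,C_2\}>0$; with the two fixes above your scheme would coincide with the paper's proof.
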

To prove this proposition, we need to show that
 $$\Psi(f) \geq 0,\; \|\Psi(f)\|_0 \leq a_1,\; L(\Psi(f)) \geq a_2\left(1+\left(|v|^2+I\right)^{\frac{\gamma}{2}}\right), \;\|\Psi(f)\|_{1-\gamma} \leq a_3,\; \text{and \;}\|\Psi(f)\|_{P} \leq a_4$$
 for some positive constants $a_j$ $(j=1,2,3,4)$,
 which will be given in the following series of lemmas.
Throughout the proof, we will only consider $\Psi^{+}(f)$, since the proof for $\Psi^{-}(f)$ is almost identical.
 \begin{lemma} $\Psi$ preserves non-negativity:
  $$\Psi(f) \geq 0$$
  for $f\in\mathcal{A}.$
 \end{lemma}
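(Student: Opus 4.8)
The plan is to read off non-negativity directly from the structure of the solution map \eqref{solution map}, term by term, exploiting that $\Psi$ is an affine-looking expression in which every ingredient is manifestly non-negative. I would treat $\Psi^+(f)$ on the region $v_1>0$; the argument for $\Psi^-(f)$ on $v_1<0$ is identical after reversing the direction of the $y$-integration.

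First I would observe that both exponential factors appearing in \eqref{solution map}, namely $\exp\!\left(-\frac{\varepsilon}{|v_1|}\int_0^x L(f)(y,v,I)\,dy\right)$ and $\exp\!\left(-\frac{\varepsilon}{|v_1|}\int_y^x L(f)(z,v,I)\,dz\right)$, are strictly positive, being exponentials of real numbers; in particular the sign of $L(f)$ and the lower-bound defining $\mathcal{A}$ play no role here. (One does have $L(f)\geq 0$ as an integral over $A$ of the non-negative integrand $\frac{f_*}{(II_*)^\alpha}\mathcal{B}(r(1-r))^\alpha(1-R)^{2\alpha+1}R^{1/2}(II_*)^\alpha$, but this is not needed.) Next I would check that the head term $\exp\!\left(-\frac{\varepsilon}{|v_1|}\int_0^x L(f)\,dy\right)f_L(v,I)$ is non-negative: this follows from the positivity of the exponential together with the hypothesis $f_{LR}\geq 0$ of Theorem \ref{main theorem}, which in particular forces $f_L\geq 0$ on $v_1>0$.

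Then I would check the integral term: $\varepsilon>0$, $|v_1|>0$ almost everywhere, the inner exponential is positive, and $Q^+(f,f)\geq 0$ because, by \eqref{q+L}, $Q^+(f,f)$ is the integral over $A$ of $\frac{f'f'_*}{(I'I'_*)^\alpha}\mathcal{B}\,dB$, in which each factor is non-negative — $f'=f(x,v',I')\geq 0$ and $f'_*=f(x,v'_*,I'_*)\geq 0$ since $f\geq 0$ for $f\in\mathcal{A}$, the weight $(I'I'_*)^\alpha$, the cross-section $\mathcal{B}=E^{\gamma/2}\geq 0$, and the measure $dB$. Integrating a non-negative integrand over $y\in[0,x]$ preserves the sign, so the second term of $\Psi^+(f)$ is non-negative. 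Since $\Psi^+(f)$ is a sum of two non-negative quantities on $v_1>0$, and the same reasoning gives $\Psi^-(f)\geq 0$ on $v_1<0$, we conclude $\Psi(f)=\Psi^+(f)\mathbbm{1}_{v_1>0}+\Psi^-(f)\mathbbm{1}_{v_1<0}\geq 0$.

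I do not expect any genuine obstacle here; the proof is a direct unwinding of definitions. The only point requiring care is bookkeeping of hypotheses — non-negativity of $f$ is used through membership in $\mathcal{A}$, and non-negativity of the inflow data through the assumption $f_{LR}\geq 0$ — so that this lemma can be cited cleanly, with the correct hypotheses, in the remaining invariance estimates for $\|\Psi(f)\|_0$, $L(\Psi(f))$, $\|\Psi(f)\|_{1-\gamma}$, and $\|\Psi(f)\|_P$.
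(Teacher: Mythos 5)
Your proposal is correct and follows essentially the same route as the paper: the paper also reads non-negativity directly off the mild form, bounding $\Psi^{\pm}(f)$ below by the head term $\exp\bigl(-\frac{\varepsilon}{|v_1|}\int L(f)\bigr)f_{LR}\geq 0$ and using $Q^{+}(f,f)\geq 0$ for $f\in\mathcal{A}$. Your version merely spells out the sign of each factor in more detail than the paper does.
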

\begin{proof}

This follows directly from the definition of the solution map:
\begin{align*}
    \Psi^+(f) &= \exp\left(-\frac{\varepsilon}{|v_1|}\int_0^x L(f)\,dy\right)f(0,v,I) + \frac{\varepsilon}{|v_1|}\int_0^x \exp\left(-\frac{\varepsilon}{|v_1|}\int_y^x L(f) \, dz \right) Q^+(f,f) \, dy \\
        &\geq \exp\left(-\frac{\varepsilon}{|v_1|}\int_0^x L(f)\,dy\right)f_L \geq 0.
\end{align*}
\end{proof}

\begin{lemma}\label{invariance 1} Define 
 $a_1 \equiv 2|||f_{LR}|||$.
Then, for sufficiently small $\varepsilon$,  we have
 $\|\Psi(f)\|_0 \leq a_1$
 whenever $f\in\mathcal{A}$.
\end{lemma}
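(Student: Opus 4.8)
The plan is to estimate $\|\Psi^+(f)\|_0$ by splitting the solution map into its two natural pieces: the \emph{head term} $H(f)(x,v,I) = \exp\left(-\frac{\varepsilon}{|v_1|}\int_0^x L(f)\,dy\right)f_L(v,I)$ and the \emph{gain-integral term} $G(f)(x,v,I) = \frac{\varepsilon}{|v_1|}\int_0^x \exp\left(-\frac{\varepsilon}{|v_1|}\int_y^x L(f)\,dz\right)Q^+(f,f)\,dy$, and to control each in the weighted $L^1_{v,I}L^\infty_x$ norm $\|\cdot\|_0$.

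First I would handle the head term. Since $L(f)\geq 0$, the exponential factor is bounded by $1$, so $\|H(f)(\cdot,v,I)\|_{L^\infty_x}\leq f_L(v,I)$ pointwise in $(v,I)$; integrating against $\varphi(v,I)$ over $\{v_1>0\}$ gives $\|H(f)\mathbbm{1}_{v_1>0}\|_0 \leq \|f_L\mathbbm{1}_{v_1>0}\|_0 \leq |||f_{LR}|||$. (In fact one can do better using the lower bound $L(f)\geq a_2(1+(|v|^2+I)^{\gamma/2})$ from the definition of $\mathcal{A}$, but the crude bound already suffices here.) Next, for the gain-integral term, I again bound the exponential by $1$ and pull the $x$-integral outside: $\|G(f)(\cdot,v,I)\|_{L^\infty_x} \leq \frac{\varepsilon}{|v_1|}\int_0^1 \|Q^+(f,f)(y,v,I)\|\,dy \leq \frac{\varepsilon}{|v_1|}\|Q^+(f,f)(\cdot,v,I)\|_{L^\infty_x}$. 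Here is where the $|v_1|^{-1}$ singularity must be dealt with: integrating $\varphi(v,I)\frac{1}{|v_1|}\|Q^+(f,f)\|_{L^\infty_x}$ in $(v,I)$ is exactly a weighted estimate of $Q^+$ with a hyperplane-type singularity at $\{v_1=0\}$. The standard device is to split $\{|v_1|\geq 1\}$ and $\{|v_1|<1\}$: on the former, $\frac{1}{|v_1|}\leq 1$ and Lemma~\ref{nbb} gives $\|Q^+(f,f)\|_0 \leq 4\pi\max\{1/a,1\}\|f\|_0^2$; on the latter, one integrates $Q^+$ over a thin slab around the hyperplane $\{v_1=0\}$, where the $|v_1|^{-1}$ factor integrates to a logarithm — or, cleaner, one bounds $\int_{|v_1|<1}|v_1|^{-1}(\cdots)\,dv_1$ by the hyperplane norm structure after noting $Q^+$ has the requisite integrability. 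This is precisely what $\|Q^+\|_P$ (Lemma~\ref{planenorm}) is designed to capture, so I would reduce the slab piece to $\|f\|_P \leq a_4$ together with $\|f\|_0$ and $\|f\|_{1-\gamma}$, giving a bound of the form $C\varepsilon\,|||f|||^2 \leq C\varepsilon (a_1+a_3+a_4)^2$.

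Combining, $\|\Psi^+(f)\|_0 \leq |||f_{LR}||| + C\varepsilon(a_1+a_3+a_4)^2 = \frac{a_1}{2} + C\varepsilon(a_1+a_3+a_4)^2$, and choosing $\varepsilon$ small enough (depending on $a_1,a_3,a_4$, which are fixed multiples of $|||f_{LR}|||$ and norm bounds on the data) makes the second term at most $\frac{a_1}{2}$, yielding $\|\Psi^+(f)\|_0\leq a_1$. The argument for $\Psi^-(f)\mathbbm{1}_{v_1<0}$ is identical with the roles of $f_L,f_R$ and the integration intervals $[0,x],[x,1]$ swapped, so $\|\Psi(f)\|_0\leq a_1$.

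The main obstacle is the $|v_1|^{-1}$ weight in the gain-integral term: a naive application of Lemma~\ref{nbb} fails because $\varphi(v,I)|v_1|^{-1}$ is not integrable near $\{v_1=0\}$ against a generic $\|\cdot\|_0$ bound. The resolution is the near-/far-hyperplane decomposition described above, where the near-hyperplane contribution is absorbed using the hyperplane norm $\|\cdot\|_P$ (hence the appearance of $a_4$ in $\mathcal{A}$ and the necessity of Lemma~\ref{planenorm}) — and one must be careful that the singular estimate Lemma~\ref{singular norm} (valid only for $1/2\leq\gamma\leq1$) is what controls the $\|\cdot\|_{1-\gamma}$ piece that Lemma~\ref{planenorm} produces, which is why the hypothesis $1/2\leq\gamma\leq1$ enters here. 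I would also keep track of the fact that the second condition in \eqref{ibc1}, $\||v_1|^{-1}(1+(|v|^2+I)^{\gamma/2})f_{LR}\|_0<\infty$, is not needed for this particular lemma but will be essential for the lower-bound invariance $L(\Psi(f))\geq a_2(1+(|v|^2+I)^{\gamma/2})$ treated in a subsequent lemma.
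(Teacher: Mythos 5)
There is a genuine gap in your treatment of the gain-integral term. After bounding the exponential factor by $1$ you are left with
\begin{equation*}
\int \varphi(v,I)\,\frac{\varepsilon}{|v_1|}\,\|Q^{+}(f,f)\|_{L^{\infty}_x}\,dv\,dI ,
\end{equation*}
and your proposed resolution of the $|v_1|^{-1}$ singularity does not work: on the slab $\{|v_1|<1\}$ the hyperplane norm only controls the cross-sectional integrals $\int_{\{v_1=c\}}\varphi\,\|Q^{+}(f,f)\|_{L^{\infty}_x}\,dv_2\,dv_3\,dI$ uniformly in $c$, so what remains is $\|Q^{+}(f,f)\|_P\int_{|v_1|<1}|v_1|^{-1}dv_1$, and this last integral diverges -- it does not ``integrate to a logarithm.'' No choice of splitting point fixes this once the exponential has been discarded, and moreover even the finite far-field piece would only carry a factor $\varepsilon$, whereas the near-hyperplane piece carries none at all.

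The missing idea (and the paper's device) is to \emph{keep} the damping factor before integrating in $y$: since $f\in\mathcal{A}$ gives $L(f)\geq a_2$, one has $\exp\bigl(-\tfrac{\varepsilon}{|v_1|}\int_y^x L(f)\,dz\bigr)\leq \exp\bigl(-\tfrac{\varepsilon a_2}{|v_1|}(x-y)\bigr)$, and carrying out the $y$-integration first yields
\begin{equation*}
\frac{\varepsilon}{|v_1|}\int_0^1 e^{-a_2\varepsilon\tau/|v_1|}\,d\tau=\frac{1}{a_2}\Bigl(1-e^{-a_2\varepsilon/|v_1|}\Bigr),
\end{equation*}
which is bounded near $v_1=0$ and whose $v_1$-integral over $\{|v_1|\leq 1/\varepsilon\}$ is at most $2\varepsilon+4a_2\varepsilon\ln\tfrac{1}{\varepsilon}$ by the computation \eqref{app}. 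The small-$|v_1|$ region is then closed with $\|Q^{+}(f,f)\|_P$ (Lemma \ref{planenorm}, using $\|f\|_{1-\gamma}\leq a_3$ from the definition of $\mathcal{A}$ -- you do not need Lemma \ref{singular norm} inside this particular lemma), while on $\{|v_1|>1/\varepsilon\}$ one uses $\varepsilon/|v_1|\leq\varepsilon^2$ and Lemma \ref{nbb}. This produces the factor $\varepsilon\ln\tfrac{1}{\varepsilon}+\varepsilon^2$ needed to absorb the gain contribution into $a_1/2$. Your treatment of the head term (bound $\leq\|f_L\|_0\leq a_1/2$) agrees with the paper and is fine.
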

\begin{proof}
     From the definition of $\Psi^+(f)$, we have
   \begin{equation*}
    \begin{aligned}
        \|\Psi^+(f)\|_0 
        &= \int \varphi(v,I) \|\Psi^+(f)\|_{L_x^{\infty}} \, dvdI \\
        %&= \int \varphi(v,I) \left\|\exp\left(-\frac{\varepsilon}{|v_1|}\int_0^x L(f)\,dy\right)f(0,v,I) + \frac{\varepsilon}{|v_1|}\int_0^x \exp\left(-\frac{\varepsilon}{|v_1|}\int_y^x L(f) \, dz \right) Q^+(f,f) \, dy \right\|_{L_x^{\infty}} \, dvdI \\
        &\leq \int \varphi(v,I) \left\|\exp\left(-\frac{\varepsilon}{|v_1|}\int_0^x L(f)\,dy\right)f_{L}(v,I) \right\|_{L_x^{\infty}} \, dvdI \\
        &+ \int \varphi(v,I) \left\| \frac{\varepsilon}{|v_1|}\int_0^x \exp\left(-\frac{\varepsilon}{|v_1|}\int_y^x L(f) \, dz \right) Q^+(f,f) \, dy \right\|_{L_x^{\infty}} \, dvdI\\
        &\equiv I+II.
    \end{aligned}
    \end{equation*}
To estimate $I$, we recall $L(f)\geq a_2$ to get
    \begin{align*}
        I&=\int \varphi(v,I) \left\|\exp\left(-\frac{\varepsilon}{|v_1|}\int_0^x L(f)\,dy\right)f_L(v,I) \right\|_{L_x^{\infty}} \, dvdI \\
        %&= \int \varphi(v,I) \left\|\exp\left(-\frac{\varepsilon}{|v_1|}\int_0^x L(f)\,dy\right)f_L \right\|_{L_x^{\infty}} \, dvdI \\
        &\leq \int \varphi(v,I) \left\|\exp\left(-\frac{\varepsilon}{|v_1|}x a_2\right) f_L \right\|_{L_x^{\infty}} \, dvdI \\
        &\leq\frac{a_1}{2}.
    \end{align*}
    %where $Wf_L$ denotes  
    %\begin{equation*}\label{doubleu}
       % Wf_L=\exp\left(-\frac{\varepsilon}{|v_1|}x a_2  \right)f_L(v,I),
    %\end{equation*}

    Then, the first part is bounded by $\frac{a_1}{2}$. 
    For the second part, we have \\
   \begin{equation*}
   \begin{aligned}
        II&=\int \varphi(v,I)\left \|\int_{0}^{x}\frac{\varepsilon}{|v_1|} \exp\left(-\frac{\varepsilon}{|v_1|} \int_y^x L(f)\, dz\right) Q^{+}(f,f) \,dy \right\|_{L_x^{\infty}} \,dv dI \\
        &\leq \int \varphi(v,I)\left \|\int_{0}^x\frac{\varepsilon}{|v_1|} \exp\left(-\frac{\varepsilon}{|v_1|} \int_y^x \!\!a_2\,dz\right) Q^{+}(f,f) \,dy \right\|_{L_x^{\infty}} \,dv dI
        \\
        &\leq \int \varphi(v,I) \frac{\varepsilon}{|v_1|}\left(\int_0^1 \exp\left(- a_2\frac{\varepsilon}{|v_1|}\tau \right)\, d\tau\right) \|Q^{+}(f,f)\|_{L_x^{\infty}}\, dv dI,
    \end{aligned}
    \end{equation*}
    where $\tau=x-y$. 
    We then divide this integral into small $v_1$ region and  large $v_1$ region:  \\
    \begin{align*}
        &\int_{\mathbb{R}^2 \times \mathbb{R}^{+}} \int_{|v_1| \leq \frac{1}{\varepsilon}}  \varphi(v,I) \frac{\varepsilon}{|v_1|} \left(\int_0^1 \exp\left(- a_2\frac{\varepsilon}{|v_1|}\tau \right)\, d\tau\right) \|Q^{+}(f,f) \|_{L_x^{\infty}}\, dvdI \\
        &+ \int_{\mathbb{R}^2 \times \mathbb{R}^{+}} \int_{|v_1|>\frac{1}{\varepsilon}} \varphi(v,I) \frac{\varepsilon}{|v_1|} \left(\int_0^1 \exp\left(- a_2\frac{\varepsilon}{|v_1|}\tau \right)\, d\tau\right) \|Q^{+}(f,f) \|_{L_x^{\infty}} \,dvdI\\
        &\equiv II_1+II_2.
    \end{align*}
For the estimate of $II_1$, we compute \\
\begin{equation}\label{app}
    \begin{aligned}
         &\int_{\left|v_{1}\right|<\frac{1}{\varepsilon}}\left(1-\exp\left(- a_2\frac{\varepsilon}{|v_{1}|}\right)\right)\,d v_{1} \\
         &= 2\int_{0}^{\varepsilon}\left(1-\exp\left(- a_2\frac{\varepsilon}{v_{1}}\right)\right)\,d v_{1}+ 2\int_{\varepsilon}^{\frac{1}{\varepsilon}}\left(1-\exp\left(- a_2\frac{\varepsilon}{v_{1}}\right)\right)\,d v_{1}\\
         &\leq 2\varepsilon + 2a_2 \int_{\varepsilon}^{\frac{1}{\varepsilon}} \frac{\varepsilon}{v_1}\, dv_1\\
         & \leq 2\varepsilon + 2 a_2\varepsilon\ln{\frac{1}{\varepsilon}} - 2a_2{\varepsilon}\ln \varepsilon \\
         &= 2\varepsilon+4a_2\varepsilon\ln{\frac{1}{\varepsilon}}.
    \end{aligned}
\end{equation}
to obtain
\begin{equation*}
\begin{aligned}
    II_1&=\int_{\mathbb{R}^2 \times \mathbb{R}^{+}} \int_{|v_1| \leq \frac{1}{\varepsilon}} \frac{\varepsilon}{|v_1|}\varphi(v,I) \left(\int_0^1 \exp\left(- a_2\frac{\varepsilon}{|v_1|}\tau \right)\, d\tau\right)\| Q^{+}(f,f)  \|_{L_x^{\infty}} \,dvdI \\
    &\leq\frac{1}{a_2} \int_{|v_1| \leq \frac{1}{\varepsilon}} \left(  1-\exp\left(- a_2\frac{\varepsilon}{|v_1|}\right) \right) \left( \int_{\mathbb{R}^2 \times \mathbb{R}^{+}} \varphi(v,I) \|Q^{+}(f,f)\|_{L_x^{\infty}} \,dv_2 dv_3 dI \right) \,dv_1 \\
    &\leq\frac{1}{a_2} \|Q^{+}(f,f)\|_{P} \int_{|v_1| \leq \frac{1}{\varepsilon}} \left(  1-\exp\left(- a_2\frac{\varepsilon}{|v_1|}\right) \right) \, dv_1 \\
    &\leq \frac{1}{a_2}\left(2\varepsilon+4a_2\varepsilon\ln{\frac{1}{\varepsilon}} \right)\|Q^{+}(f,f)\|_{P}. 
\end{aligned}
\end{equation*}

For $II_2$, we use  $\int_0^1 \exp\left(-\frac{\varepsilon}{|v_1|}\tau\right)d\tau\leq 1$ to get
\begin{align*}
    II_2&= \int_{\mathbb{R}^2 \times \mathbb{R}^{+}} \int_{|v_1| > \frac{1}{\varepsilon}} \frac{\varepsilon}{|v_1|}\varphi(v,I) \left(\int_0^1 \exp\left(-\frac{\varepsilon}{|v_1|}\tau \right) \,d\tau\right)\| Q^{+}(f,f)  \|_{L_x^{\infty}} \,dvdI\\
    &\leq \int_{\mathbb{R}^2 \times \mathbb{R}^{+}} \int_{|v_1|>\frac{1}{\varepsilon}} \frac{\varepsilon}{|v_1|}\varphi(v,I) \|Q^{+}(f,f)\|_{L_x^{\infty}}   \,dvdI  \\
    &\leq \varepsilon^2 \int_{\mathbb{R}^3 \times \mathbb{R}^+} \varphi(v,I) \|Q^{+}(f,f)\|_{L_x^{\infty}} \,dv dI \\
    &= \varepsilon^2 \|Q^{+}(f,f)\|_0.
\end{align*}
We combine the estimate of $I$ and $II$, and recall the estimates on $Q^+$ in Lemma \ref{nbb}
and \ref{planenorm} to derive
\begin{equation*}
\begin{aligned}
    \|\Psi^{+}(f)\|_0
    &\leq \frac{a_1}{2} +  \left(\frac{1}{a_2} \left(2\varepsilon+4a_2\varepsilon\ln{\frac{1}{\varepsilon}}\right)  \|Q^{+}(f,f)\|_{P}+ \varepsilon^2\|Q^{+}(f,f)\|_0 \right) \\
    &\leq \frac{a_1}{2} +  \left( \frac{1}{a_2}\left(2\varepsilon+4a_2\varepsilon\ln{\frac{1}{\varepsilon}}\right)\max\{\pi,2^{2-\gamma}\} (\|f\|_0\cdot {\|f\|_{1-\gamma}}+\|f\|^2_0)+ 4\pi \max\left\{\frac{1}{a},1\right\} \varepsilon^2\|f\|_0^2 \right).
\end{aligned}
\end{equation*}
Since $f \in \mathcal{A}$, $f$ satisfies $\|f\|_0 \leq a_1 $ and $\|f\|_{1-\gamma} \leq a_3$. Thus, for sufficiently small $\varepsilon$, we have 

$$\|\Psi^+(f)\|_0\leq \frac{a_1}{2}+\frac{1}{a_2}  \left(4a_2\varepsilon \ln\frac{1}{\varepsilon}+2\varepsilon \right) \max\{\pi,2^{2-\gamma}\} (a_1\cdot a_3+ a_1^2) + 4\pi \max\left\{\frac{1}{a},1\right\} \varepsilon^2 a_1^2 \leq a_1.$$ \\
\end{proof}

\begin{lemma}\label{invariance L lower bound}
For $a_1$ defined in Lemma \ref{invariance 1} and 
\begin{align*}
   c_\alpha &=\int_{[0,1]^2\times \mathbb{S}^2}  (r(1-r))^{\alpha}(1-R)^{2\alpha+1} R^{\frac{1}{2}} \,dR dr d\sigma,\\
C_1&=\int_{|v_*|\leq1}\exp \left(-4\pi \frac{1}{ |v_{*1}|} \frac{e^a}{a}\left(1+\left(|v_*|^2+I_*\right)^{\frac{\gamma}{2}}\right) a_1\right) f_L(v_*,I_*)\, dv_*dI_*, \\
and\;\,C_2&=\int_{|v_*|\geq4}\exp \left(-4\pi \frac{1}{ |v_{*1}|} \frac{e^a}{a}\left(1+\left(|v_*|^2+I_*\right)^{\frac{\gamma}{2}}\right) a_1\right) f_L(v_*,I_*)\, dv_*dI_*,
    \end{align*}
define 
    \begin{align*}
        a_2\equiv c_{\alpha} \min\left\{ \frac{1}{4^{\gamma}} C_1, C_2\right\}>0.
    \end{align*}
Then, for $f\in\mathcal{A}$, we have
    \begin{align*}
        L(\Psi(f))\geq a_2\left(1+\left(|v|^2+I\right)^{\frac{\gamma}{2}}\right).
    \end{align*}
\end{lemma}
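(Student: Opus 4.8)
As noted in the statement it is enough to treat $\Psi^+$, the case of $\Psi^-$ being identical with $f_R$ and $v_1<0$. The plan is to \emph{discard the nonnegative gain contribution} in \eqref{solution map} and keep only the head term. Since $Q^+(f,f)\ge 0$ and $L(f)\ge 0$,
\[
\Psi^+(f)(x,v_*,I_*)\ \ge\ \exp\!\left(-\frac{\varepsilon}{|v_{*1}|}\int_0^x L(f)(y,v_*,I_*)\,dy\right)f_L(v_*,I_*)\qquad(v_{*1}>0).
\]
The first genuine step is to bound the damping rate in the exponent \emph{uniformly in $x$}, and this is precisely where membership $f\in\mathcal A$ (rather than an a priori bound) is used: since $\|f\|_0\le a_1$, Lemma~\ref{lemma6rs} gives $L(f)(y,v_*,I_*)\le 4\pi\tfrac{e^a}{a}\bigl(1+(|v_*|^2+I_*)^{\gamma/2}\bigr)a_1$, so integrating over $y\in(0,x)\subset(0,1)$ and using $\varepsilon\le 1$ yields, for every $x\in[0,1]$,
\[
\Psi^+(f)(x,v_*,I_*)\ \ge\ e^{-\Theta(v_*,I_*)}\,f_L(v_*,I_*),\qquad \Theta(v_*,I_*):=\frac{4\pi}{|v_{*1}|}\frac{e^a}{a}\bigl(1+(|v_*|^2+I_*)^{\gamma/2}\bigr)a_1 .
\]

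Next I substitute this into $L(\Psi^+(f))$ using $\mathcal B=E^{\gamma/2}$ and the explicit form of $dB$. The weight $(r(1-r))^\alpha(1-R)^{2\alpha+1}R^{1/2}$ does not involve $(v_*,I_*)$, while $E=\tfrac14|v-v_*|^2+I+I_*$ does not involve $(r,R,\sigma)$; hence the $(r,R,\sigma)$-integration factors out as the constant $c_\alpha$ and
\[
L(\Psi^+(f))(v,I)\ \ge\ c_\alpha\int_{\mathbb{R}^3\times\mathbb{R}^+}e^{-\Theta(v_*,I_*)}\,f_L(v_*,I_*)\Bigl(\tfrac14|v-v_*|^2+I+I_*\Bigr)^{\gamma/2}dv_*\,dI_* .
\]

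The heart of the proof is to extract the growth factor $1+(|v|^2+I)^{\gamma/2}$ from $E^{\gamma/2}$ by restricting the $v_*$-integral to a region on which $E\gtrsim 1+|v|^2+I$, which forces a case distinction on $|v|$. When $|v|$ is large, restrict to the unit ball $\{|v_*|\le1\}$: there $|v-v_*|\ge|v|-1$ is comparable to $|v|$, so (discarding $I_*\ge0$) $E\ge\tfrac14|v-v_*|^2+I\gtrsim|v|^2+I$ and $E$ is also bounded below, giving $E^{\gamma/2}\gtrsim 4^{-\gamma}\bigl(1+(|v|^2+I)^{\gamma/2}\bigr)$ on that region and hence a contribution $\gtrsim c_\alpha\,4^{-\gamma}C_1\bigl(1+(|v|^2+I)^{\gamma/2}\bigr)$. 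When $|v|$ is small, restrict instead to the exterior $\{|v_*|\ge4\}$: there $|v-v_*|\ge|v_*|-|v|\gtrsim 1$, so $E\ge\tfrac14|v-v_*|^2+I\gtrsim 1+I$, and since $|v|$ is bounded this again dominates $1+(|v|^2+I)^{\gamma/2}$, giving a contribution $\gtrsim c_\alpha C_2\bigl(1+(|v|^2+I)^{\gamma/2}\bigr)$. Fixing the split point (where $|v|$ is comparable to the numerical thresholds $1$ and $4$ appearing in $C_1,C_2$) and collecting the constants yields $L(\Psi^+(f))\ge a_2\bigl(1+(|v|^2+I)^{\gamma/2}\bigr)$ with $a_2=c_\alpha\min\{4^{-\gamma}C_1,C_2\}$, and $a_2>0$ because $f_L$ contributes positively on each of $\{|v_*|\le1\}$ and $\{|v_*|\ge4\}$. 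The main obstacle is exactly this last geometric step, namely producing \emph{simultaneously} the constant "$1$" and the polynomial growth $(|v|^2+I)^{\gamma/2}$ out of a single lower bound on $E$ over an appropriately chosen $v_*$-region; a secondary, conceptual point (stressed in the paper) is that the uniform bound on $\Theta$ in the first step is available only because $f$ lies in $\mathcal A$, so the estimate is genuinely not an a priori one.
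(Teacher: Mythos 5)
Your proposal is correct and follows essentially the same route as the paper: lower-bound $\Psi^+(f)$ by its head term, use Lemma \ref{lemma6rs} together with $\|f\|_0\le a_1$ and $\varepsilon<1$ to control the exponent, factor the $(r,R,\sigma)$-integration out as $c_\alpha$, and split into $|v|\ge 2$ (integrating over $\{|v_*|\le 1\}$) and $|v|<2$ (integrating over $\{|v_*|\ge 4\}$), arriving at the same constants $C_1$, $C_2$ and $a_2$. If anything, your treatment of the small-$|v|$ case is marginally more careful, since you retain the $(1+I)^{\gamma/2}$ growth coming from $E$ to dominate $1+\left(|v|^2+I\right)^{\gamma/2}$ for unbounded $I$ (up to a harmless numerical factor), whereas the paper simply drops to the constant $C_2$ at that point.
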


\begin{proof}
    From the definition of $c_{\alpha}$, we write
  \begin{equation*}
   \begin{aligned}
     L(\Psi^+(f))&=    \int_{A}\Psi^+(f)(v_*,I_*)E^{\frac{\gamma}{2}}  (r(1-r))^{\alpha}(1-R)^{2\alpha+1} R^{\frac{1}{2}} \,dR dr d\sigma dI_* dv_*\\
     &= c_{\alpha}\int_{\mathbb{R}^3\times\mathbb{R}^+}\Psi^+(f)(v_*,I_*)E^{\frac{\gamma}{2}}   \,dv_* dI_*.
    \end{aligned}
    \end{equation*}

To compute $\int \Psi^+(f)\,dv_*dI_*$, we use the definition of $\Psi^+(f)$, which gives
     \begin{equation*}
   \begin{aligned}
     \int\Psi^+(f)(v_*,I_*)\,dv_*dI_*
     &\geq  \int\exp \left(-\frac{\varepsilon}{ |v_{*1}|} \int_{0}^{x} L(f)(y, v_*,I_*) \,d y\right)f_L(v_*,I_*)\,dv_*dI_*.
     \end{aligned}
    \end{equation*}
 Recalling Lemma \ref{lemma6rs}, we proceed further as
     \begin{equation*}
   \begin{aligned}
        \int \Psi^+(f)(v_*,I_*)\,dv_*dI_*
        &\geq  \int\exp \left(-4\pi \frac{\varepsilon}{ |v_{*1}|} \frac{e^a}{a} \left(1+\left(|v_*|^2+I_*\right)^{\frac{\gamma}{2}}\right) \|f\|_0 \right)f_L(v_*,I_*)\,dv_*dI_*\\
        &\geq \int\exp \left(-4\pi \frac{\varepsilon}{ |v_{*1}|} \frac{e^a}{a}\left(1+\left(|v_*|^2+I_*\right)^{\frac{\gamma}{2}}\right) a_1\right) f_L(v_*,I_*)\,dv_*dI_*\\
            &\geq \int\exp \left(-4\pi \frac{1}{ |v_{*1}|} \frac{e^a}{a}\left(1+\left(|v_*|^2+I_*\right)^{\frac{\gamma}{2}}\right) a_1\right) f_L(v_*,I_*)\, dv_*dI_*\\
            &\equiv C_0 >0.
        \end{aligned}
    \end{equation*}
  In the last line, we assumed without loss of generality that $\varepsilon<1$. First, for the case $|v|\geq2$, we have
\begin{align*}
    \int_{\mathbb{R}^3\times\mathbb{R}^+}\Psi^+(f)(v_*,I_*)E^{\frac{\gamma}{2}} \,dv_* dI_* &= \int_{\mathbb{R}^3\times\mathbb{R}^+}\Psi^+(f)(v_*,I_*)\left( \frac{1}{4}|v-v_*|^2+I+I_*\right)^{\frac{\gamma}{2}} \,dv_* dI_* \\
    &\geq \int_{|v_*|\leq 1} \Psi^+(f)\left(\frac{1}{4}|v-v_*|^2+I+I_*\right)^{\frac{\gamma}{2}}\, dv_*dI_* \\
    &\geq \int_{|v_*|\leq 1} \Psi^+(f)\left(\frac{1}{4}||v|-|v_*||^2+I+I_*\right)^{\frac{\gamma}{2}}\, dv_*dI_* \\
    &\geq \int_{|v_*|\leq 1} \Psi^+(f)\left(\frac{1}{4}\left||v|-1\right|^2+I\right)^{\frac{\gamma}{2}}\, dv_*dI_* \\
    &\geq \int_{|v_*|\leq 1} \Psi^+(f)\left(\frac{1}{16}|v|^2+I\right)^{\frac{\gamma}{2}}\, dv_*dI_*.
\end{align*}
In the last line, we used $2|v|-2\geq |v|$. Thus, we can get
\begin{equation}\label{invariance L 1}
    \begin{aligned}
        &\int_{\mathbb{R}^3\times\mathbb{R}^+}\Psi^+(f)(v_*,I_*)E^{\frac{\gamma}{2}} \,dv_* dI_* \\
        &\geq 
 \frac{1}{4^{\gamma}} \left(|v|^2+I\right)^{\frac{\gamma}{2}} \int_{|v_*|\leq 1} \Psi^+(f)\, dv_*dI_* \\
 &\geq \frac{1}{4^{\gamma}} \left(|v|^2+I\right)^{\frac{\gamma}{2}} \int_{|v_*|\leq1}\exp \left(-4\pi \frac{1}{ |v_{*1}|} \frac{e^a}{a}\left(1+\left(|v_*|^2+I_*\right)^{\frac{\gamma}{2}}\right) a_1\right) f_L(v_*,I_*)\, dv_*dI_*\\
 &\geq \frac{1}{4^{\gamma}} C_1\left(|v|^2+I\right)^{\frac{\gamma}{2}} >0.
    \end{aligned}
\end{equation}

Next, for the case $|v|<2$, we can obtain
\begin{equation*}
    \begin{aligned}
        \int_{\mathbb{R}^3\times\mathbb{R}^+}\Psi^+(f)(v_*,I_*)E^{\frac{\gamma}{2}} \,dv_* dI_* &\geq \int_{|v_*|\geq 4} \Psi^+(f)\left(\frac{1}{4}||v_*|-|v||^2+I+I_*\right)^{\frac{\gamma}{2}}\, dv_*dI_* \\
    &\geq \int_{|v_*|\geq 4} \Psi^+(f) (1+I+I_*)^{\frac{\gamma}{2}}\, dv_*dI_* \\
    &\geq \int_{|v_*|\geq 4} \Psi^+(f) \, dv_*dI_*.
    \end{aligned}
    \end{equation*}
    Similar to the previous case, by the definition of $\Psi^+$,
    \begin{equation}\label{invariance L 2}
        \begin{aligned}
    &\int_{\mathbb{R}^3\times\mathbb{R}^+}\Psi^+(f)(v_*,I_*)E^{\frac{\gamma}{2}} \,dv_* dI_* \\
    &\geq \int_{|v_*|\geq4}\exp \left(-4\pi \frac{1}{ |v_{*1}|} \frac{e^a}{a}\left(1+\left(|v_*|^2+I_*\right)^{\frac{\gamma}{2}}\right) a_1\right) f_L(v_*,I_*)\, dv_*dI_*\\
    &\geq C_2>0.
    \end{aligned}
\end{equation}

Combining the estimates \eqref{invariance L 1} and \eqref{invariance L 2} to get
\begin{align*}
    L(\Psi^+(f)) &\geq c_{\alpha}\left\{ \frac{1}{4^{\gamma}} C_1\left(|v|^2+I\right)^{\frac{\gamma}{2}}+ C_2\right\} \\
    &\geq c_{\alpha} \min\left\{ \frac{1}{4^{\gamma}} C_1, C_2\right\} \left(1+\left(|v|^2+I\right)^{\frac{\gamma}{2}}\right).
\end{align*}
By the definition of $a_2$, we can get the desired result.
\end{proof}

\begin{lemma}\label{invariance gamma} Define
\begin{align*}
    a_3\equiv\frac{a_1}{2}+\frac{1}{a_2}\frac{16\pi}{(1+\gamma)^2}\max\left\{\frac{1}{a}, 1\right\}a_1^2.
\end{align*} 
Then we have
    $$\|\Psi(f)\|_{1-\gamma} \leq a_3$$
    for $f\in\mathcal{A}$.
\end{lemma}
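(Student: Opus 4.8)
The plan is to follow the architecture of Lemma \ref{invariance 1}: decompose $\Psi^+(f)$ into its boundary (``head'') term and its collision term, estimate each in $\|\cdot\|_{1-\gamma}$, and add; as usual it suffices to treat $\Psi^+$, the case $\Psi^-$ being identical. The pleasant feature is that this estimate is \emph{shorter} than the $\|\cdot\|_0$ one: no splitting of the $v_1$-axis and no smallness of $\varepsilon$ are needed, because $a_3$ has been defined precisely to absorb the entire collision contribution rather than being pinned to $2|||f_{LR}|||$.

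For the head term $\exp\!\left(-\tfrac{\varepsilon}{|v_1|}\int_0^x L(f)(y,v,I)\,dy\right)f_L(v,I)$ I would use only $L(f)\geq 0$, so the exponential factor is $\leq 1$; since $f_L$ does not depend on $x$, its $L^\infty_x$-norm is at most $f_L(v,I)$, and multiplying by $\varphi(v,I)|v-w|^{-(1-\gamma)}$, integrating in $(v,I)$, and taking $\sup_w$ bounds this piece by $\|f_{LR}\|_{1-\gamma}\leq |||f_{LR}|||=a_1/2$.

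For the collision term I would invoke $f\in\mathcal{A}$, hence $L(f)\geq a_2$, to replace the inner integrating factor by $\exp\!\left(-\tfrac{\varepsilon a_2}{|v_1|}(x-y)\right)$, bound $Q^+(f,f)(y,v,I)\leq \|Q^+(f,f)(\cdot,v,I)\|_{L^\infty_x}$, and compute the elementary $y$-integral:
\[
\frac{\varepsilon}{|v_1|}\int_0^x \exp\!\left(-\frac{\varepsilon a_2}{|v_1|}(x-y)\right)dy = \frac{1}{a_2}\left(1-e^{-\varepsilon a_2 x/|v_1|}\right)\leq \frac{1}{a_2}\qquad (x\in[0,1]).
\]
Thus the collision part $C$ of $\Psi^+(f)$ satisfies $\|C(\cdot,v,I)\|_{L^\infty_x}\leq \tfrac{1}{a_2}\|Q^+(f,f)(\cdot,v,I)\|_{L^\infty_x}$, hence $\sup_w\int \varphi(v,I)|v-w|^{-(1-\gamma)}\|C(\cdot,v,I)\|_{L^\infty_x}\,dvdI\leq \tfrac{1}{a_2}\|Q^+(f,f)\|_{1-\gamma}$. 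Now Lemma \ref{singular norm} (this is where $1/2\leq\gamma\leq 1$ enters), together with $\|f\|_0\leq a_1$, gives $\|Q^+(f,f)\|_{1-\gamma}\leq \tfrac{16\pi}{(1+\gamma)^2}\max\{1,1/a\}\,a_1^2$. Summing the head and collision bounds reproduces exactly $\tfrac{a_1}{2}+\tfrac{1}{a_2}\tfrac{16\pi}{(1+\gamma)^2}\max\{1,1/a\}a_1^2=a_3$, and the identical argument for $\Psi^-$ would finish the proof.

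The only genuine difficulty lies upstream, in Lemma \ref{singular norm}: it is the bound of $\|Q^+(f,f)\|_{1-\gamma}$ by $\|f\|_0^2$ alone that makes the estimate close, and that bound rests on the polyatomic device of integrating first in the internal-energy-distribution parameter $r$ (producing the regularizing factor $(1-R)E$ in the denominator), which is also what forces $\gamma\geq 1/2$. Given that input, the present lemma is routine; the one point worth recording is the contrast with Lemma \ref{invariance 1}, where the constraint $a_1=2|||f_{LR}|||$ forced a $v_1$-partition and the factor $\varepsilon\ln(1/\varepsilon)$, neither of which is needed here.
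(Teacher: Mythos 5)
Your proof is correct and follows essentially the same route as the paper: bound the boundary term by $\|f_L\|_{1-\gamma}\leq|||f_{LR}|||=a_1/2$ after dropping the exponential factor, and bound the collision term by $\tfrac{1}{a_2}\|Q^+(f,f)\|_{1-\gamma}$ using $L(f)\geq a_2$ and the elementary $y$-integral, then close with Lemma \ref{singular norm} and $\|f\|_0\leq a_1$. The only cosmetic difference is that you make the exact evaluation $\tfrac{1}{a_2}\bigl(1-e^{-\varepsilon a_2 x/|v_1|}\bigr)$ explicit, which matches the paper's estimate step for step.
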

\begin{proof}
From the definition of $\Psi^+(f)$,
    \begin{align*}
         \|\Psi^{+}(f)\|_{1-\gamma} 
        &= \sup_w \int \frac{1}{|v+w|^{1-\gamma}} \varphi(v,I) \|\Psi^{+}(f)\|_{L_x^{\infty}} \,dv dI \\
        &\leq \sup_w \int \frac{1}{|v+w|^{1-\gamma}} \varphi(v,I) \left\|\exp \left(-\frac{\varepsilon}{|v_1|} \int_0^x L(f) \,dy \right) f_{L}\right\|_{L_x^{\infty}} \,dv dI \\
        &+ \sup_w \int \frac{1}{|v+w|^{1-\gamma}} \varphi(v,I) \left\|\frac{\varepsilon}{|v_1|} \int_0^x \exp \left( -\frac{\varepsilon}{|v_1|} \int_y^x L(f)\, dz \right) Q^{+}(f,f)\,dy\right\|_{L_x^{\infty}}\, dv dI \\
        &\equiv I+II.
    \end{align*}
The estimate of $I$ is straightforward:
   \begin{align*}
         I&=\sup_w \int \frac{1}{|v+w|^{1-\gamma}} \varphi(v,I) \left\|\exp \left(-\frac{\varepsilon}{|v_1|} \int_0^x L(f) \,dy \right) f_{L}\right\|_{L_x^{\infty}} \,dv dI\\
         &=\sup_w \int \frac{1}{|v+w|^{1-\gamma}} \varphi(v,I) \|f_{L}\|_{L_x^{\infty}} \,dv dI\\
         &=\|f_L\|_{1-\gamma}\\
        &\leq\frac{a_1}{2}. 
    \end{align*}

For the second part, 
Since $f \in \mathcal{A}$, then $L(f) \geq a_2.$  Therefore,
$$\exp \left(-\frac{\varepsilon}{|v_1|} \int_y^x L(f)\, dz \right) \leq \exp \left( -\frac{\varepsilon}{|v_1|}a_2(x-y)\right).$$
Then by explicit calculation, the second part is bounded by
\begin{align*}
    \sup_w &\int \frac{1}{|v+w|^{1-\gamma}} \varphi(v,I) \left\|\frac{\varepsilon}{|v_1|} \int_0^x \exp \left( -\frac{\varepsilon}{|v_1|} \int_y^x L(f) \,dz \right) Q^{+}(f,f)\,dy\right\|_{L_x^{\infty}} \,dv dI \\
    &\leq \sup_w \int \frac{1}{|v+w|^{1-\gamma}} \varphi(v,I) \left\|\frac{\varepsilon}{|v_1|} \int_0^x \exp \left( -\frac{\varepsilon}{|v_1|} a_2(x-y) \right) Q^{+}(f,f)\,dy\right\|_{L_x^{\infty}} \,dv dI \\
    &\leq \sup_w \int \frac{1}{|v+w|^{1-\gamma}} \varphi(v,I) \frac{1}{a_2}\left(1-\exp\left(-\frac{\varepsilon}{|v_1|}a_2\right)\right) \left\| Q^{+}(f,f) \right\|_{L_x^{\infty}} \,dv dI \\
    &\leq \frac{1}{a_2} \sup_w \int \frac{1}{|v+w|^{1-\gamma}} \varphi(v,I) \|Q^{+}(f,f)\|_{L_x^{\infty}} \,dv dI \\
    &= \frac{1}{a_2}\|Q^{+}(f,f)\|_{1-\gamma} .
\end{align*}
Recalling Lemma \ref{singular norm}, we have for$1/2 \leq \gamma \leq 1$,
$$\frac{1}{a_2}\|Q^{+}(f,f)\|_{1-\gamma} 
    \leq \frac{1}{a_2}\frac{16\pi}{(1+\gamma)^2}\max\left\{1, \frac{1}{a}\right\} \|f\|_0^2 \leq \frac{1}{a_2}\frac{16\pi}{(1+\gamma)^2}\max\left\{1, \frac{1}{a}\right\} a_1^2.$$
Therefore,    
\[
\|\Psi^+(f)\|_{1-\gamma}\leq \frac{a_1}{2}+
\frac{1}{a_2}\frac{16\pi}{(1+\gamma)^2}\max\left\{1, \frac{1}{a}\right\} a_1^2.\]
Therefore, we set $$a_3=\frac{a_1}{2}+\frac{1}{a_2}\frac{16\pi}{(1+\gamma)^2}\max\left\{\frac{1}{a}, 1\right\}a_1^2 $$ to get the desired result.
\end{proof}

\begin{lemma} Define 
\[
a_4 \equiv \frac{a_1}{2}+ \max\{\pi,2^{2-\gamma}\} \frac{1}{a_2}\left(a_1^2 + a_1 \cdot a_3 \right).
\] Then, $\Psi(f)$ satisfies
    $$\|\Psi(f)\|_{P} \leq a_4$$
    whenever $f\in \mathcal{A}$.
\end{lemma}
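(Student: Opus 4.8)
The plan is to follow the template already established in Lemmas \ref{invariance 1} and \ref{invariance gamma}, since the structure of the argument for $\|\Psi(f)\|_P$ will be essentially the same: split $\Psi^+(f)$ (and symmetrically $\Psi^-(f)$) into the boundary ``head'' term and the ``$Q^+$'' term, estimate each in the hyperplane norm $\|\cdot\|_P$, and then add. Concretely, for a fixed hyperplane $P$ with distance function $d(v)$ and the induced $2$-dimensional measure $d\pi_{P,v}$, I would write
\begin{equation*}
\|\Psi^+(f)\|_P \le \sup_P \int_{\mathbb{R}^+}\int_P \varphi(v,I)\,\big\|\,\text{head}\,\big\|_{L^\infty_x}\,d\pi_{P,v}dI + \sup_P \int_{\mathbb{R}^+}\int_P \varphi(v,I)\,\big\|\,\text{$Q^+$ term}\,\big\|_{L^\infty_x}\,d\pi_{P,v}dI \equiv I + II.
\end{equation*}

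For the head term $I$, I would use the invariance lower bound $L(f)\ge a_2$ (valid since $f\in\mathcal A$) to dominate $\exp\big(-\frac{\varepsilon}{|v_1|}\int_0^x L(f)\,dy\big)\le 1$, which collapses the head term to $\|f_L\|_{L^\infty_x}$ pointwise; hence $I \le \|f_L\|_P \le \||f_{LR}|\|\le \tfrac{a_1}{2}$, exactly as in Lemma \ref{invariance gamma}. For the $Q^+$ term $II$, again bound the exponential using $L(f)\ge a_2$, carry out the $y$-integration to pull out the factor $\frac{1}{a_2}\big(1-\exp(-\frac{\varepsilon}{|v_1|}a_2)\big)\le \frac{1}{a_2}$, and thereby reduce to $II \le \frac{1}{a_2}\|Q^+(f,f)\|_P$. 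Then invoke Lemma \ref{planenorm}, which gives $\|Q^+(f,f)\|_P \le \max\{\pi, 2^{2-\gamma}\}\big(\|f\|_0\|f\|_0 + \|f\|_0\|f\|_{1-\gamma}\big)\le \max\{\pi,2^{2-\gamma}\}\big(a_1^2 + a_1 a_3\big)$, using $\|f\|_0\le a_1$ and $\|f\|_{1-\gamma}\le a_3$ for $f\in\mathcal A$. Adding the two pieces yields
\begin{equation*}
\|\Psi^+(f)\|_P \le \frac{a_1}{2} + \max\{\pi, 2^{2-\gamma}\}\frac{1}{a_2}\big(a_1^2 + a_1 a_3\big) = a_4,
\end{equation*}
and the identical argument applies to $\Psi^-(f)$, so $\|\Psi(f)\|_P\le a_4$.

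The only genuinely delicate point is making sure the hyperplane geometry interacts correctly with the $v_1$-integration inside the definition of $\Psi^\pm$: unlike in Lemma \ref{invariance 1}, where we could split the full $\mathbb R^3_v$ integral into $|v_1|\le 1/\varepsilon$ and $|v_1|>1/\varepsilon$, here the outer integral is already restricted to a $2$-plane $P$, on which $v_1$ ranges over a (possibly degenerate) $1$-dimensional set. I would handle this by keeping the bound $\frac{1}{a_2}\big(1-\exp(-\frac{\varepsilon}{|v_1|}a_2)\big)\le \frac{1}{a_2}$ uniform in $v_1$ — this is crude but costs nothing since no small-$\varepsilon$ gain is needed for the $a_4$ bound (the factor $a_4$ is simply defined to absorb the constant), which is why, unlike the $\|\cdot\|_0$ bound, we do not need to separate large and small $v_1$. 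Thus the main (very mild) obstacle is purely bookkeeping: confirming that the $y$-integration and the plane integration commute and that the pointwise-in-$v$ bound $\exp(-\cdots)\le 1$ together with Lemma \ref{planenorm} applied verbatim suffice. I expect no real difficulty beyond that.
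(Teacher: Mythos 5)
Your proposal is correct and follows essentially the same route as the paper: split into the boundary head term and the $Q^+$ term, bound the exponential factors using $L(f)\geq a_2$ to reduce to $\tfrac{a_1}{2}+\tfrac{1}{a_2}\|Q^+(f,f)\|_P$, and then invoke Lemma \ref{planenorm} together with $\|f\|_0\leq a_1$, $\|f\|_{1-\gamma}\leq a_3$. Your observation that no small/large $v_1$ splitting is needed here (the uniform bound $\tfrac{1}{a_2}$ suffices since $a_4$ absorbs the constant) matches the paper's argument exactly.
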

\begin{proof}
From the definition of $\Psi^+(f)$ and the hyperplane norm $\|\cdot\|_P$,
\begin{align*}
    &\|\Psi^{+}(f)\|_{P} \\
    &= \sup_P \int_{P\times \mathbb{R}^+} \varphi(v,I) \left\|\Psi^{+}(f)\right\|_{L_x^{\infty}} \,d\pi_{P,v} dI \\
       &\leq \sup_P \int_{P\times \mathbb{R}^+} \varphi(v,I) \left\|\exp \left(-\frac{\varepsilon}{|v_1|}\int_0^x L(f) \,dy\right) f(0,v,I)\right\|_{L_x^{\infty}}\, d\pi_{P,v} dI \\
    &+ \sup_P \int_{P\times \mathbb{R}^+} \varphi(v,I) \left\|\frac{\varepsilon}{|v_1|} \int_0^x \exp \left( -\frac{\varepsilon}{|v_1|} \int_y^x L(f) \,dz \right) Q^{+}(f,f) \,dy \right\|_{L_x^{\infty}} \,d\pi_{P,v} dI\\
    &\equiv I+II.
\end{align*}
The estimate for $I$ is straightforward:
\begin{align*}
    I&=\sup_P \int_{P\times \mathbb{R}^+} \varphi(v,I) \left\|\exp \left(-\frac{\varepsilon}{|v_1|}\int_0^x L(f) \,dy\right) f_L(v,I)\right\|_{L_x^{\infty}}\, d\pi_{P,v} dI \\
    &\leq \sup_P \int_{P\times \mathbb{R}^+} \varphi(v,I) \| f_L(v,I)\|_{L_x^{\infty}}\, d\pi_{P,v} dI \\
    &= \|f_L\|_P \\
    &\leq \frac{a_1}{2}.
\end{align*}

To estimate $II$, we use $L(f)\geq a_2$ to get
\begin{align*}
    & \sup_P \int_{P\times \mathbb{R}^+} \varphi(v,I) \left\|\frac{\varepsilon}{|v_1|} \int_0^x \exp \left( -\frac{\varepsilon}{|v_1|} \int_y^x L(f)\, dz \right) Q^{+}(f,f) \,dy\right\|_{L_x^{\infty}} \,d\pi_{P,v} dI \\
    &\leq \sup_P \int_{P\times \mathbb{R}^+} \varphi(v,I) \left\|\frac{\varepsilon}{|v_1|} \int_0^x \exp \left( -\frac{\varepsilon}{|v_1|} a_2(x-y) \right) Q^{+}(f,f) \,dy\right\|_{L_x^{\infty}} \,d\pi_{P,v} dI \\
    &\leq \sup_P \int_{P\times \mathbb{R}^+} \varphi(v,I) \frac{\varepsilon}{|v_1|} \left(\frac{|v_1|}{a_2 \varepsilon} \right) \|Q^+(f,f)\|_{L_x^{\infty}} \,d\pi_{P,v} dI. \\
\end{align*}
Therefore, \newline
\begin{align*}
    \|\Psi^{+}(f)\|_{P}&\leq \frac{a_1}{2} + \frac{1}{a_2} \sup_P \int_{P\times \mathbb{R}^+} \varphi(v,I) \|Q^{+}(f,f)\|_{L_x^{\infty}}\, d\pi_{P,v} dI \\
    &\leq \frac{a_1}{2} + \frac{1}{a_2}\|Q^{+}(f,f)\|_P \\
    &\leq \frac{a_1}{2} + \max\{\pi,2^{2-\gamma}\} \frac{1}{a_2} \left(\|f\|_0^2 + \|f\|_0 \cdot \|f\|_{1-\gamma} \right) \\
    &\leq \frac{a_1}{2} + \max\{\pi,2^{2-\gamma}\}\frac{1}{a_2}\left(a_1^2 + a_1 \cdot a_3 \right). 
\end{align*}
\noindent Thus, we can get the desired result by setting $a_4 = \frac{a_1}{2}+ \max\{\pi,2^{2-\gamma}\}\frac{1}{a_2}\left(a_1^2 + a_1 \cdot a_3 \right)$. \\
\end{proof}

\vspace{-1em}
\section{Contraction of the solution map \texorpdfstring{$\Psi$}{}}\label{contraction}
    We now turn to the contraction property of the solution map $\Psi$ in $\|\cdot\|_0$ under the suitable smallness condition given in Theorem \ref{main theorem}. 
    
    \begin{theorem}[Contraction property]
        Let $1/2\leq \gamma\leq 1$. Suppose $f_{LR}$ satisfies the assumptions in Theorem \ref{main theorem}. Let $f \in \mathcal{A}$.  Then we have  
        \begin{align*}
            \|\Psi(f)-\Psi(g)\|_0 \leq C(\varepsilon)\|f-g\|_0,
        \end{align*}
        where $C(\varepsilon)$ satisfies
        $\displaystyle\lim_{\varepsilon\rightarrow0}C(\varepsilon)=0$.
    \end{theorem}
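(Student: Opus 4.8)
The plan is to estimate the difference $\Psi(f)-\Psi(g)$ directly from the definition \eqref{solution map}, splitting it into the contribution of the boundary term and the contribution of the $Q^+$ integral term, just as in the invariance proof of Lemma \ref{invariance 1}. For $v_1>0$ we write
\begin{align*}
\Psi^+(f)-\Psi^+(g)&=\left(e^{-\frac{\varepsilon}{|v_1|}\int_0^x L(f)\,dy}-e^{-\frac{\varepsilon}{|v_1|}\int_0^x L(g)\,dy}\right)f_L\\
&\quad+\frac{\varepsilon}{|v_1|}\int_0^x\left(e^{-\frac{\varepsilon}{|v_1|}\int_y^x L(f)\,dz}-e^{-\frac{\varepsilon}{|v_1|}\int_y^x L(g)\,dz}\right)Q^+(f,f)\,dy\\
&\quad+\frac{\varepsilon}{|v_1|}\int_0^x e^{-\frac{\varepsilon}{|v_1|}\int_y^x L(g)\,dz}\left(Q^+(f,f)-Q^+(g,g)\right)dy.
\end{align*}
The first two groups carry a factor $L(f)-L(g)=L(f-g)$ coming from the elementary bound $|e^{-s}-e^{-t}|\leq|s-t|$ (valid since both exponents are nonnegative); here I would use the \emph{lower} bound $L(f),L(g)\geq a_2(1+(|v|^2+I)^{\gamma/2})$ from the solution space to damp the resulting $\varepsilon/|v_1|$ and then split the $v_1$-integration into $|v_1|\leq 1/\varepsilon$ and $|v_1|>1/\varepsilon$ exactly as in \eqref{app}, picking up an $O(\varepsilon\ln\frac1\varepsilon)$ factor together with a hyperplane-type or $\|\cdot\|_0$-type norm of the product $L(f-g)\,f_L$ or $L(f-g)\,Q^+(f,f)$. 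Here the weight $\varphi$ absorbs the $(|v|^2+I)^{\gamma/2}$ growth of $L(f-g)$ against the exponential decay, and the assumption $\||v_1|^{-1}(1+(|v|^2+I)^{\gamma/2})f_{LR}\|_0<\infty$ in \eqref{ibc1} is precisely what makes the boundary contribution finite; the $Q^+$ contributions use Lemma \ref{nbb} and Lemma \ref{planenorm} together with $|L(f-g)|\lesssim(1+(|v|^2+I)^{\gamma/2})\|f-g\|_0$ from Lemma \ref{lemma6rs}.

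For the last group I would use the bilinear identity $Q^+(f,f)-Q^+(g,g)=Q^+(f-g,f)+Q^+(g,f-g)$ together with the symmetry Lemma \ref{lem6}, and then invoke the already-proven gain-term estimates in the bilinear form: $\|Q^+(f-g,f)\|_0+\|Q^+(g,f-g)\|_0\lesssim\|f-g\|_0(\|f\|_0+\|g\|_0)\leq 2a_1\|f-g\|_0$ by (the bilinear version of) Lemma \ref{nbb}, and likewise $\|Q^+(f-g,\cdot)\|_P$ by Lemma \ref{planenorm}. Dividing the $v_1$-integral into the two regions as before, the large-$v_1$ region gives an $\varepsilon^2$ factor times $\|Q^+(\cdot)\|_0$ and the small-$v_1$ region gives $\frac{1}{a_2}(2\varepsilon+4a_2\varepsilon\ln\frac1\varepsilon)$ times $\|Q^+(\cdot)\|_P$, so that altogether
\[
\|\Psi(f)-\Psi(g)\|_0\leq C(\varepsilon)\|f-g\|_0,\qquad C(\varepsilon)=O\!\left(\varepsilon\ln\tfrac1\varepsilon\right)\xrightarrow[\varepsilon\to0]{}0,
\]
with the implied constant depending only on $a,\gamma,a_1,a_2,a_3$ and $\|f_{LR}\|$-type quantities. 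The case $v_1<0$ is handled identically using $\Psi^-$.

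The main obstacle I expect is the boundary (head) term $\bigl(e^{-\frac{\varepsilon}{|v_1|}\int_0^x L(f)}-e^{-\frac{\varepsilon}{|v_1|}\int_0^x L(g)}\bigr)f_L$: unlike the $Q^+$ term it is not quadratically small, and the factor $L(f-g)$ produces a genuine velocity growth $(|v|^2+I)^{\gamma/2}$ that is \emph{not} present in Maslova's hard-sphere analysis (where the analogous term was, as the authors note, overlooked). Controlling it requires simultaneously (i) the stronger lower bound $L\geq a_2(1+(|v|^2+I)^{\gamma/2})$ from Lemma \ref{invariance L lower bound}, which is exactly what cancels the growth after one factor of $\varepsilon/|v_1|$ is converted via $se^{-cs}\leq C$, and (ii) the extra moment hypothesis $\||v_1|^{-1}(1+(|v|^2+I)^{\gamma/2})f_{LR}\|_0<\infty$; the delicate point is to organize the small-$v_1$ estimate so that the logarithmically divergent $\int_\varepsilon^{1/\varepsilon}\frac{\varepsilon}{v_1}dv_1$ is paired against a finite norm rather than against something that itself blows up, which is why the hyperplane norm $\|\cdot\|_P$ must be brought in at that step.
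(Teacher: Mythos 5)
Your decomposition and machinery coincide with the paper's proof: the same splitting of $\Psi^+(f)-\Psi^+(g)$ into the head term and two Duhamel pieces, the mean value/Lipschitz bound for differences of exponentials combined with $|L(f)-L(g)|=|L(f-g)|\leq \frac{4\pi e^a}{a}(1+(|v|^2+I)^{\gamma/2})\|f-g\|_0$ from Lemma \ref{lemma6rs}, the bilinear form of the gain-term difference estimated through Lemmas \ref{nbb} and \ref{planenorm}, the lower bound from Lemma \ref{invariance L lower bound}, and the split of the $v_1$-integration at $|v_1|=1/\varepsilon$ as in \eqref{app}, yielding $C(\varepsilon)=O(\varepsilon\ln\frac1\varepsilon)$. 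So in structure this is the paper's argument.

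One concrete caveat concerns your stated mechanism for the head term. There only \emph{one} factor $\frac{\varepsilon}{|v_1|}$ is available (the one produced by the mean value theorem), so if you spend it by converting $\frac{\varepsilon}{|v_1|}x\bigl(1+(|v|^2+I)^{\gamma/2}\bigr)e^{-\frac{\varepsilon}{|v_1|}a_2 x(1+(|v|^2+I)^{\gamma/2})}\leq \frac{1}{a_2}$ via $se^{-cs}\leq C$, you are left with $\frac{1}{a_2}\|f-g\|_0\int\varphi f_L$, which is $O(1)$ and not small; and the alternative of extracting $\varepsilon\ln\frac1\varepsilon$ from a $v_1$-split would require a weighted hyperplane norm of $(1+(|v|^2+I)^{\gamma/2})f_{LR}$ that is not assumed. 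The paper instead bounds the exponential by $1$ and pairs the full factor $\frac{\varepsilon}{|v_1|}\bigl(1+(|v|^2+I)^{\gamma/2}\bigr)f_L$ directly with the hypothesis $\||v_1|^{-1}(1+(|v|^2+I)^{\gamma/2})f_{LR}\|_0<\infty$ of \eqref{ibc1}, which gives a clean $C\varepsilon\|f-g\|_0$ with no damping needed; the $se^{-cs}$ trick together with the strong lower bound $L\geq a_2(1+(|v|^2+I)^{\gamma/2})$ is what rescues the \emph{other} exponential-difference term, the one multiplying $Q^+(g,g)$, where two factors of $\frac{\varepsilon}{|v_1|}$ are present: one is consumed to cancel the velocity growth and the second is fed into the $v_1$-split against $\|Q^+(g,g)\|_P$ and $\|Q^+(g,g)\|_0$. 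With that reassignment of roles your plan goes through exactly as in the paper.
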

    \begin{proof}
    We focus on the proof for $\Psi^+$  since the proof for $\Psi^-$ is almost identical. From the definition of $\Psi^+$, we have
        \begin{align*}
        &\| \Psi^+(f) - \Psi^+(g) \|_0 
        = \int \varphi(v,I) \| \Psi^+(f)-\Psi^+(g)\|_{L_x^{\infty}} \, dvdI \\
        &\leq \int \varphi(v,I) \left\| \exp\left(-\frac{\varepsilon}{|v_1|}\int_0^x L(f)\,dy\right)f_L - \exp\left(-\frac{\varepsilon}{|v_1|}\int_0^x L(g)\,dy\right)f_L \right\|_{L_x^{\infty}} \, dvdI \\
        &+ \int \varphi(v,I) \left\| \frac{\varepsilon}{|v_1|}\int_0^x
        %\exp\left(-\frac{\varepsilon}{|v_1|}\int_y^x L(f) \, dz %\right) 
        e^{-\frac{\varepsilon \int_y^x L(f) \, dz }{|v_1|}} 
        Q^+(f,f) \, dy - \frac{\varepsilon}{|v_1|}\int_0^x %\exp\left(-\frac{\varepsilon}{|v_1|}\int_y^x L(g) \, dz %\right)
        e^{-\frac{\varepsilon \int_y^x L(g) \, dz }{|v_1|}}
        Q^+(g,g) \, dy \right\|_{L_x^{\infty}} \, dvdI \\
        %&+ \int \varphi(v,I) \left\| \frac{\varepsilon}{|v_1|}\int_0^x
        %\exp\left(-\frac{\varepsilon}{|v_1|}\int_y^x L(f) \, dz \right) 
        %Q^+(f,f) \, dy - \frac{\varepsilon}{|v_1|}\int_0^x \exp\left(-\frac{\varepsilon}{|v_1|}\int_y^x L(g) \, dz \right)Q^+(g,g) \, dy \right\|_{L_x^{\infty}} \, dvdI \\
        &\equiv I+II.
        \end{align*}
        
By the mean value theorem, there is $0<\theta<1$ so that
        \begin{align*}
        I&= \int \varphi(v,I) \left\| \left( \exp\left(-\frac{\varepsilon}{|v_1|}\int_0^x L(f)\,dy\right) - \exp\left(-\frac{\varepsilon}{|v_1|}\int_0^x L(g)\,dy\right) \right) f_L \right\|_{L_x^{\infty}} \, dvdI\\
        &\leq \int \varphi(v,I) \left\| \frac{\varepsilon}{|v_1|} \left(\int_0^x L(f)-L(g) \, dy \right) \exp\left(-\frac{\varepsilon}{|v_1|}\int_0^x \theta L(f) + (1-\theta) L(g) \, dy \right) \right\|_{L_x^{\infty}} \|f_L\|_{L_x^{\infty}} \, dvdI.
        \end{align*}
        We then use lemma \ref{lemma6rs} to estimate $I$ as follows:
        \begin{align*}
       I &\leq \int \varphi(v,I) \left\| \frac{\varepsilon}{|v_1|}x \left(\frac{4\pi e^a}{a}\right) \left(1+\left(|v|^2+I\right)^{\frac{\gamma}{2}}\right) \|f-g\|_0 \right. \\
        &\hspace{5cm}\left. \times\exp\left(-\frac{\varepsilon}{|v_1|}\int_0^x \theta L(f) + (1-\theta) L(g) \, dy \right) \right\|_{L_x^{\infty}} \|f_L\|_{L_x^{\infty}} \, dvdI \\
        &\leq \int \varphi(v,I) \left\| \frac{\varepsilon}{|v_1|} \left(\frac{4\pi e^a}{a}\right)x \left(1+\left(|v|^2+I\right)^{\frac{\gamma}{2}}\right) \|f-g\|_0
         \right\|_{L_x^{\infty}} \|f_L\|_{L_x^{\infty}} \, dvdI \\
        &\leq \left(\frac{4\pi e^a}{a}\right) \varepsilon \|f-g\|_0 \int \varphi(v,I) \frac{1}{|v_1|} \left(1+\left(|v|^2+I\right)^{\frac{\gamma}{2}}\right) \|f_L\|_{L_x^{\infty}} \,dvdI \\
        &\leq \left(\frac{4\pi e^a}{a}\right)\varepsilon \left\|\frac{1}{|v_1|}\left(1+\left(|v|^2+I\right)^{\frac{\gamma}{2}}\right)f_L\right\|_0 \|f-g\|_0 \\
        &\leq C \varepsilon \|f-g\|_0 \\
        \end{align*}

We now turn to the estimate of $II$. First, we split $II$
\begin{equation*}
    \begin{aligned}
    II&= \left\| \frac{ \varepsilon}{|v_1|}\left(\int_{0}^{x}\exp\left(-\frac{\varepsilon}{|v_1|}\int_{y}^{x}L(f)(z,v,I)\,dz\right)Q^+(f,f)-\exp\left(-\frac{\varepsilon}{|v_1|}\int_{y}^{x}L(g)(z,v,I)\,dz\right)Q^+(g,g)\,dy\right)\right\|_0\\
    \end{aligned}
    \end{equation*}
    \begin{equation}\label{ln1}
        \begin{aligned}
&\leq \int_{\mathbb{R}^{3}\times \mathbb{R}^+} \varphi(v,I)\frac{ \varepsilon}{|v_1|}\times\bigg(\left\|\int_{0}^{x}\left(Q^{+}(f, f)-Q^{+}(g, g)\right)\exp\left(-\frac{\varepsilon}{|v_1|}\int_{y}^{x}L(f)(z,v,I)\,dz\right)\,dy \right\|_{L_x^{\infty}}\\
  & + \left\|\int_{0}^{x}\left\{\exp\left(-\frac{\varepsilon}{|v_1|}\int_{y}^{x}L(f)(z,v,I)\,dz\right)- \exp\left(-\frac{\varepsilon}{|v_1|}\int_{y}^{x}L(g)(z,v,I)\,dz\right)\right\}Q^{+}(g, g)\,d y\right\|_{L_x^{\infty}}  \bigg) \,dvdI \\
  &\equiv II_1+II_2.
  \end{aligned}
  \end{equation}
For $II_1$, We observe
$$ 
\begin{aligned}
\left\|Q^{+}(f, f)-Q^{+}(g, g)\right\|_{L_x^{\infty}} 
&=\left\|Q^{+}(f-g, f+g)\right\|_{L_x^{\infty}}, \\
\end{aligned}$$  
and employ the lower bound of $L(f)$ and $L(g)$ given in Lemma \ref{invariance L lower bound} to find
\begin{equation*}
\begin{aligned}
   II_1&\leq\int_{\mathbb{R}^{3}\times \mathbb{R}^+} \varphi(v,I)\frac{\varepsilon}{\left|v_{1}\right| }\bigg(\left\|\int_{0}^{x}\left(Q^{+}(f, f)-Q^{+}(g, g)\right)\exp\left(-\frac{\varepsilon}{|v_1|}a_2({x}-{y})\right)\,d y\right\|_{L_x^{\infty}} \bigg) \,d vdI.
\end{aligned}
\end{equation*}
We set $\tau=a_2(x-y)$ and use 
$$
\int_{0}^{a_2} \exp\left(-\frac{\varepsilon}{|v_{1}|} \tau\right) d \tau=\frac{\left|v_{1}\right|}{\varepsilon}\left(1-\exp\left(-a_2\frac{\varepsilon}{|v_{1}|}\right)\right)
$$
to get
\begin{equation}\label{CC}
\begin{aligned}
   II_1
   &\leq \int_{\mathbb{R}_{v}^{3}\times\mathbb{R}^+} \varphi(v,I)\frac{\varepsilon}{|v_1|}\left\|Q^{+}(f-g, f+g)\right\|_{L_x^{\infty}}{\left(\frac{1}{a_2}\int_{0}^{a_2} \exp\left(-\frac{\varepsilon}{|v_{1}|} \tau\right)\, d \tau\right)} \,d vdI\\
  &\leq \int_{\mathbb{R}_{v}^{3}\times\mathbb{R}^+} \varphi(v,I)\frac{\varepsilon}{|v_1|}\left\|Q^{+}(f-g, f+g)\right\|_{L_x^{\infty}}
  \left\{\frac{1}{a_2}\frac{\left|v_{1}\right|}{\varepsilon}\left(1-\exp\left(-a_2\frac{\varepsilon}{|v_{1}|}\right)\right)  \right\}\,d vdI\\
    &\leq \frac{1}{a_2}\int_{\mathbb{R}_{v}^{3}\times\mathbb{R}^+} \varphi(v,I)\left\|Q^{+}(f-g, f+g)\right\|_{L_x^{\infty}}
  \left(1-\exp\left(-a_2\frac{\varepsilon}{|v_{1}|}\right)\right)  \,d vdI\\
\end{aligned}
\end{equation}
We split the last integral in \eqref{CC} into the large and small velocity parts: 
$$
\begin{aligned}
 II_2&\leq \frac{1}{a_2} \int_{\left|v_{1}\right|\leq \frac{1}{\varepsilon}}\left(1-\exp\left(-a_2\frac{\varepsilon}{|v_{1}|}\right)\right) \times
\left[  \int_{\mathbb{R}} \int_{\mathbb{R}} \int_{\mathbb{R}^+}\varphi(v,I)\left\|Q^{+}(f-g, f+g)\right\|_{L_x^{\infty}}\, d v_{2} d v_{3}dI\right]\, d v_{1}\\
& +   \frac{1}{a_2} \int_{\left|v_{1}\right|\geq \frac{1}{\varepsilon}}\left(1-\exp\left(-a_2\frac{\varepsilon}{|v_{1}|}\right)\right) \times
\left[  \int_{\mathbb{R}} \int_{\mathbb{R}} \int_{\mathbb{R}^+}\varphi(v,I)\left\|Q^{+}(f-g, f+g)\right\|_{L_x^{\infty}}\, d v_{2} d v_{3}dI\right]\, d v_{1}\\
&\equiv II_{1,s}+II_{1,l}.
\end{aligned}
$$
We estimate the small velocity part, we recall from \eqref{app} that
\[
\int_{\left|v_{1}\right|\leq \frac{1}{\varepsilon}}\left(1-\exp\left(-a_2\frac{\varepsilon}{|v_{1}|}\right)\right)dv_1
\leq
2\varepsilon + 4a_2\varepsilon\ln{\frac{1}{\varepsilon}}
\]
to compute
$$
\begin{aligned}
II_{1,s} &=\frac{1}{a_2} \int_{\left|v_{1}\right|\leq \frac{1}{\varepsilon}}\left(1-\exp\left(-a_2\frac{\varepsilon}{|v_{1}|}\right)\right) \times
\left[  \int_{\mathbb{R}} \int_{\mathbb{R}} \int_{\mathbb{R}^+}\varphi(v,I)\left\|Q^{+}(f-g, f+g)\right\|_{L_x^{\infty}}\, d v_{2} d v_{3}dI\right]\, d v_{1}\\
&\leq \frac{1}{a_2} \left[ \int_{\left|v_{1}\right|\leq \frac{1}{\varepsilon}}\left(1-\exp\left(-a_2\frac{\varepsilon}{|v_{1}|}\right)\right)dv_1\right]  \times
\left[   \sup_{P}\int_{P}\int_{\mathbb{R}^+} \varphi(v,I)\left\|Q^{+}(f-g, f+g)\right\|_{L_x^{\infty}} \,dId \pi_{P,v}\right] \\
&\leq  \frac{1}{a_2}\left(2\varepsilon +4a_2\varepsilon\ln{\frac{1}{\varepsilon}}\right) \|Q^+(f+g,f-g)\|_P    \\
&\leq \frac{1}{a_2}\Big(2\varepsilon + 4a_2\varepsilon\ln{\frac{1}{\varepsilon}}\Big)\max\{\pi,2^{2-\gamma}\}\big\{\|f-g\|_0 \cdot \|f+g\|_0 + \|f-g\|_0 \cdot \|f+g\|_{1-\gamma}\big\}  \\
&\leq 2\max\{\pi,2^{2-\gamma}\}\left\{\frac{a_1+a_3}{a_2}\right\}\Big(2\varepsilon + 4a_2\varepsilon\ln{\frac{1}{\varepsilon}}\Big)\|f-g\|_0 .
\end{aligned}
$$
where we used Lemma \ref{planenorm} and  
\[
\|f+g\|_0+\|f+g\|_{1-\gamma}\leq 2\{a_1+a_3\}.
\]
The estimate for the large velocity integral, we use
\[
1-e^{-x}\leq x
\]
and Lemma \ref{nbb} to obtain

\begin{equation*}
\begin{aligned}
 II_{1,l}&= \frac{1}{a_2} \int_{\left|v_{1}\right|\geq \frac{1}{\varepsilon}}\left(1-\exp\left(-a_2\frac{\varepsilon}{|v_{1}|}\right)\right) \times
\left[  \int_{\mathbb{R}} \int_{\mathbb{R}} \int_{\mathbb{R}^+}\varphi(v,I)\left\|Q^{+}(f-g, f+g)\right\|_{L_x^{\infty}}\, d v_{2} d v_{3}dI\right]\, d v_{1}\\
 &\leq \frac{1}{a_2} \int_{\left|v_{1}\right|\geq \frac{1}{\varepsilon}}\left(a_2\frac{\varepsilon}{|v_{1}|}\right) \times
\left[  \int_{\mathbb{R}} \int_{\mathbb{R}} \int_{\mathbb{R}^+}\varphi(v,I)\left\|Q^{+}(f-g, f+g)\right\|_{L_x^{\infty}}\, d v_{2} d v_{3}dI\right]\, d v_{1}\\
&\leq  \varepsilon^{2} \int_{\left|v_{1}\right|>\frac{1}{\varepsilon}} \left[\int_{\mathbb{R}} \int_{\mathbb{R}}\int_{\mathbb{R}^+} \varphi(v,I)\left\|Q^{+}(f-g, f+g)\right\|_{L_x^{\infty}}\, dId v_{2} d v_{3}\right]\,d v_{1} \\
& \leq  \varepsilon^{2} \int_{\mathbb{R}}  \int_{\mathbb{R}} \int_{\mathbb{R}}\int_{\mathbb{R}^+} \varphi(v,I)\left\|Q^{+}(f-g, f+g)\right\|_{L_x^{\infty}} \,dId v_{1}d v_{2} d v_{3}\\
&\leq   \varepsilon^{2} \left\|Q^{+}(f-g,f+g)\right\|_0 \\
&\leq 4\pi\varepsilon^2 \max\left\{\frac{1}{a}, 1\right\} \|f-g\|_0 \cdot \|f+g\|_0 \\
& =8\pi \varepsilon^2 a_1\max\left\{\frac{1}{a}, 1\right\} \|f-g\|_0 
\end{aligned}
\end{equation*}

We combine these estimates $II_{1,s}$ and $II_{1,l}$ to
get the following estimate for $II_1$:
$$
\begin{aligned}
II_1
&\leq 2\max\{\pi,2^{2-\gamma}\} \left\{\frac{a_1+a_3}{a_2}\right\}\Big(2\varepsilon+ 4a_2\varepsilon\ln{\frac{1}{\varepsilon}}\Big)\|f-g\|_0 + 8\pi\varepsilon^2 a_1\max\left\{\frac{1}{a}, 1\right\}  \|f-g\|_0 
\end{aligned}
$$

We now turn to the estimate of $II_2$ in \eqref{ln1}. By the mean value theorem again, there exists $0<\theta<1$ such that
\begin{equation*}
    \begin{aligned}
    II_2&\leq \int_{\mathbb{R}^{3}\times \mathbb{R}^+} \varphi\frac{\varepsilon}{|v_{1}| }\left\| \int_{0}^{x}\left\{\exp\left(-\frac{\varepsilon}{|v_1|}\int_{y}^{x}L(f)\,dz\right)- \exp\left(-\frac{\varepsilon}{|v_1|}\int_{y}^{x}L(g)\,dz\right)\right\}Q^{+}(g,g)\,dy\right\|_{L_x^{\infty}} \, dvdI\\
        &\leq \int_{\mathbb{R}^{3}\times \mathbb{R}^+} \varphi\frac{\varepsilon}{\left|v_{1}\right| }\left\| \int_{0}^{x}  \frac{\varepsilon}{ |v_{1}|}\left(\int_{y}^{x} L(f)-L(g)\, dz\right)\right.\\
        &\hspace{2cm}\left.\times \exp \left(-\frac{\varepsilon}{|v_1|}\int_{y}^{x} \theta L(f) + (1-\theta) L(g)\, dz \right)\,d y\right\|_{L_x^{\infty}}\left\|Q^{+}(g,g)\right\|_{L_x^{\infty}}  \, d vdI\\
        \end{aligned}
        \end{equation*}
        We then recall the upper bound of $L$ from Lemma \ref{lemma6rs} and the lower bound from Lemma \ref{invariance L lower bound} to proceed
        \begin{equation*}
        \begin{aligned}
        II_2&\leq \int_{\mathbb{R}^{3}\times \mathbb{R}^+} \varphi(v,I)\frac{\varepsilon^2}{ |v_1|^2}\left(1+\left(|v|^2+I\right)^{\frac{\gamma}{2}}\right)\left(\frac{4\pi e^a}{a}\right)\left\| f-g\right\|_{0}\\
        &\hspace{2cm}\times \left\|\int_{0}^{x}(x-y) \exp \left(-\frac{\varepsilon}{ |v_{1}|} a_2\left(1+\left(|v|^2+I\right)^{\frac{\gamma}{2}}\right)(x-y) \right)\,dy \right\|_{L_x^{\infty}} \left\|Q^{+}(g,g)\right\|_{L_x^{\infty}} \,  d vdI\\
        &= \int_{\mathbb{R}^{3}\times \mathbb{R}^+} \varphi(v,I)\frac{\varepsilon^2}{ |v_1|^2}\left(1+\left(|v|^2+I\right)^{\frac{\gamma}{2}}\right)\left(\frac{4\pi e^a}{a}\right)\left\| f-g\right\|_{0}\\
        &\hspace{2cm}\times \left\|\int_{0}^{x}\tau \exp \left(-\frac{\varepsilon}{ |v_{1}|} a_2\left(1+\left(|v|^2+I\right)^{\frac{\gamma}{2}}\right)\tau \right)\,dy \right\|_{L_x^{\infty}} \left\|Q^{+}(g,g)\right\|_{L_x^{\infty}} \,  d vdI.
        \end{aligned}
        \end{equation*}
        Note that, in the last line, we set $\tau=x-y$.
        We split $II_2$ into the small velocity integral $II_{2,s}$ and the large velocity integral $II_{2,l}$:
        \begin{equation*}
            \begin{aligned}
II_2&= \frac{4\pi e^a}{a}\left\| f-g\right\|_{0}\left(\int_{|v_1|< \frac{1}{\varepsilon}}+\int_{|v_1|\geq\frac{1}{\varepsilon}}\right)\varphi(v,I)\frac{\varepsilon^2}{|v_1|^2 }\left(1+\left(|v|^2+I\right)^{\frac{\gamma}{2}}\right) \\
&\hspace{2cm}\times\left\|\int_{0}^{x}\tau\exp \left(-\frac{\varepsilon}{ |v_{1}|} \tau a_2\left(1+\left(|v|^2+I\right)^{\frac{\gamma}{2}}\right) \right) \,d\tau\right\|_{L_x^{\infty}}\left\|Q^{+}(g,g)\right\|_{L_x^{\infty}}\,dvdI \\
&\equiv II_{2,s}+II_{2,l}.
    \end{aligned}
\end{equation*}

For $II_{2,s}$, we have
\begin{equation*}
    \begin{aligned}
       \hspace{-2.5cm}II_{2,s}&=\frac{4\pi e^a}{a}\left\| f-g\right\|_{0}\int_{|v_1|<\frac{1}{\varepsilon}} \varphi(v,I)\frac{\varepsilon^2}{|v_1|^2 }\left(1+\left(|v|^2+I\right)^{\frac{\gamma}{2}}\right) \\
       &\hspace{2cm}\times \left\|\int_{0}^{x}\tau\exp \left(-\frac{\varepsilon}{ |v_{1}|} \tau a_2\left(1+\left(|v|^2+I\right)^{\frac{\gamma}{2}}\right) \right) \,d\tau\right\|_{L_x^{\infty}}\left\|Q^{+}(g,g)\right\|_{L_x^{\infty}}\,dvdI \\
       &\leq \frac{4\pi e^a}{a}\left\| f-g\right\|_{0} \int_{|v_1|<\frac{1}{\varepsilon}} \frac{\varepsilon^2}{|v_1|^2 }\left(1+\left(|v|^2+I\right)^{\frac{\gamma}{2}}\right) \left(\int_{0}^{1}\tau\exp \left(-\frac{\varepsilon}{ |v_{1}|} \tau a_2\left(1+\left(|v|^2+I\right)^{\frac{\gamma}{2}}\right) \right) \,d\tau\right) \\
        &\hspace{2cm} \times \left(\int_{\mathbb{R}^2\times \mathbb{R}^+} \varphi(v,I) \left\|Q^{+}(g,g)\right\|_{L_x^{\infty}} \,dv_2dv_3dI\right) \,dv_1 \\
       \end{aligned}
       \end{equation*}
       \begin{equation*}
           \begin{aligned}
        &\leq \frac{4\pi e^a}{ a}\left\|Q^{+}(g,g)\right\|_{P} \left\| f-g\right\|_{0}\\ &\hspace{1cm}\times\int_{|v_1| < \frac{1}{\varepsilon}}\frac{\varepsilon}{|v_{1}| }\left(\int_{0}^{1} \frac{\varepsilon}{|v_1|}\left(1+\left(|v|^2+I\right)^{\frac{\gamma}{2}}\right) \tau \left\{\exp \left(-\frac{\varepsilon}{ 2|v_{1}|} a_2 \tau \left(1+\left(|v|^2+I\right)^{\frac{\gamma}{2}}\right) \right)\right\}^2 \,d\tau\right) \, d v_1 \\
       &\leq \frac{8\pi e^a}{a_2 a}\left\|Q^{+}(g,g)\right\|_{P} \left\| f-g\right\|_{0}\int_{|v_1| < \frac{1}{\varepsilon}}\frac{\varepsilon}{|v_{1}| } \left(\int_{0}^{1}\exp \left\{-\frac{\varepsilon}{ 2|v_{1}|} a_2 \tau \left(1+\left(|v|^2+I\right)^{\frac{\gamma}{2}}\right) \right\}\,d\tau\right) \, d v_1\\
       \end{aligned}
       \end{equation*}
       In the last line, we used the uniform bound of $xe^{-x}\leq1$. Thus,
       \begin{equation*}
           \begin{aligned}
II_{2,s}&\leq \frac{8\pi e^a}{a_2 a} \left\|Q^{+}(g,g)\right\|_{P} \left\| f-g\right\|_{0}\int_{|v_1|< \frac{1}{\varepsilon}} \frac{\varepsilon}{\left|v_{1}\right| } \left(\int_{0}^{1}\exp \left\{-\frac{\varepsilon}{ 2|v_{1}|} \tau a_2\right\}\,d\tau\right) \, d v_1\\
         &= \frac{16\pi e^a}{a_2^2 a} \left\|Q^{+}(g,g)\right\|_{P} \left\| f-g\right\|_{0}\int_{|v_1|< \frac{1}{\varepsilon}} \left(1-\exp \left\{-\frac{\varepsilon}{ 2v_{1}}a_2  \right\}\right) \,  d v_1.
    \end{aligned}
\end{equation*}
Now, we recall \eqref{app} to compute the integral in $v_1$, 
and Lemma \ref{planenorm} to estimate the plain norm of $Q^+$:
\begin{equation*}
    \begin{aligned}
        II_{2,s}
         &\leq \frac{16\pi e^a}{a_2^2 a}\Big(2\varepsilon + 2a_2\varepsilon\ln{\frac{1}{\varepsilon}}\Big)\left\|Q^{+}(g,g)\right\|_{P} \left\| f-g\right\|_{0} \\
         &\leq \frac{16\pi e^a}{a_2^2 a}\Big(2\varepsilon + 2a_2\varepsilon\ln{\frac{1}{\varepsilon}}\Big)\max\{\pi,2^{2-\gamma}\}(\|g\|_0^2 + \|g\|_0 \cdot \|g\|_{1-\gamma} ) \left\| f-g\right\|_{0} \\
         &\leq \max\{\pi,2^{2-\gamma}\} \frac{16\pi e^a}{a}\left\{ \frac{a_1^2+a_1\cdot a_3}{a_2^2}\right\}\Big(2\varepsilon + 2a_2\varepsilon\ln{\frac{1}{\varepsilon}}\Big)\|f-g\|_0 \\
    \end{aligned}
\end{equation*}
We now estimate $II_{2,l}$: 
\begin{equation*}
    \begin{aligned}
    II_{2,l} &=\frac{4\pi e^a}{a}\left\| f-g\right\|_{0}\int_{|v_1|\geq \frac{1}{\varepsilon}}\varphi(v,I)\frac{\varepsilon^2}{|v_1|^2 }\left(1+\left(|v|^2+I\right)^{\frac{\gamma}{2}}\right) \\
&\hspace{1.5cm}\times\left\|\int_{0}^{x}\tau\exp \left(-\frac{\varepsilon}{ |v_{1}|} \tau a_2\left(1+\left(|v|^2+I\right)^{\frac{\gamma}{2}}\right) \right) \,d\tau\right\|_{L_x^{\infty}}\left\|Q^{+}(g,g)\right\|_{L_x^{\infty}}\,dvdI \\
    &\leq  \frac{4\pi e^a}{a} \left\| f-g\right\|_{0} \int_{|v_1| \geq \frac{1}{\varepsilon}} \frac{\varepsilon}{|v_1|}\varphi(v,I) \\
&\hspace{1.5cm}\times \left(\int_{0}^{1}\frac{\varepsilon}{|v_1|}\left(1+\left(|v|^2+I\right)^{\frac{\gamma}{2}}\right) \exp \left\{-\frac{\varepsilon}{ |v_{1}|} \tau a_2\left(1+\left(|v|^2+I\right)^{\frac{\gamma}{2}}\right) \right\}\,d\tau\right)\left\|Q^{+}(g,g)\right\|_{L_x^{\infty}}   \,dvdI. \\
\end{aligned}
\end{equation*}
We then carry out integration with respect to $\tau$ and use the fact that $\frac{1}{|v_1|}<\varepsilon$ to obtain 
\begin{equation*}
    \begin{aligned}
         II_{2,l}&\leq \frac{4\pi e^a}{a_2 a}\varepsilon^2\left\| f-g\right\|_{0}\int_{\mathbb{R}}\int_{\mathbb{R}^{2}\times \mathbb{R}^+} \varphi(v,I) \left(1-\exp \left\{-\frac{\varepsilon}{ |v_{1}|}  a_2\left(1+\left(|v|^2+I\right)^{\frac{\gamma}{2}}\right)\right\}\right)\left\|Q^{+}(g,g)\right\|_{L_x^{\infty}}\,   dvdI \\ 
         &\leq\frac{4\pi e^a}{a_2 a}\varepsilon^2\left\| f-g\right\|_{0}\int \varphi(v,I)\left\|Q^{+}(g,g)\right\|_{L_x^{\infty}}  \, dvdI\\
        &= \frac{4\pi e^a}{a_2 a}\varepsilon^2\left\|Q^{+}(g,g)\right\|_{0}\left\| f-g\right\|_{0} \\
        &\leq \frac{4\pi e^a}{a_2 a}\varepsilon^2\left\| f-g\right\|_{0} \cdot 4\pi \max\left\{ \frac{1}{a}, 1\right\} \|f\|_0^2 \\
        &\leq \frac{16\pi^2 e^a}{a}\max\left\{ \frac{1}{a}, 1\right\} \left( \frac{a_1^2}{a_2}\right) \varepsilon^2 \|f-g\|_0.
    \end{aligned}
\end{equation*}
We combine the estimate for $II_{2,s}$ and $II_{2,l}$ to obtain
\begin{align*}
    II_2\leq \max\{\pi,2^{2-\gamma}\}\frac{16\pi e^a}{a}\left\{ \frac{a_1^2+a_1\cdot a_3}{a_2^2}\right\}\left(2\varepsilon + 2a_2\varepsilon\ln{\frac{1}{\varepsilon}}\right)\|f-g\|_0 + 16\pi^2\frac{ e^a}{a}\max\left\{ \frac{1}{a}, 1\right\} \left( \frac{a_1^2}{a_2}\right) \varepsilon^2 \|f-g\|_0.
\end{align*}
Combining all the estimates $I$ and $II$ to get
        \begin{align*}
        \|\Psi^+(f)-\Psi^+(g)\|_0 &\leq C \varepsilon \|f-g\|_0 + 2\max\{\pi,2^{2-\gamma}\} \left\{\frac{a_1+a_3}{a_2}\right\}\Big(2\varepsilon+ 4a_2\varepsilon\ln{\frac{1}{\varepsilon}}\Big)\|f-g\|_0 \\
        &+ 8\pi\varepsilon^2 a_1\max\left\{\frac{1}{a}, 1\right\}  \|f-g\|_0 + \frac{16\pi^2 e^a}{a}\max\left\{ \frac{1}{a}, 1\right\} \left( \frac{a_1^2}{a_2}\right) \varepsilon^2 \|f-g\|_0\\
        &+\max\{\pi,2^{2-\gamma}\}\frac{16\pi e^a}{a}\left\{ \frac{a_1^2+a_1\cdot a_3}{a_2^2}\right\}\left(2\varepsilon + 2a_2\varepsilon\ln{\frac{1}{\varepsilon}}\right)\|f-g\|_0\\
        &\leq C(\varepsilon) \|f-g\|_0.
        \end{align*}
    Similarly for $\Psi^-$,
        \begin{align*}
            \|\Psi^-(f)-\Psi^-(g)\|_0 \leq C(\varepsilon)\|f-g\|_0.
        \end{align*}
    Therefore,
    \begin{align*}
        \|\Psi(f)-\Psi(g)\|_0 \leq C(\varepsilon)\|f-g\|_0.
    \end{align*}
     We can conclude that the mapping $\Psi$ is contractive for sufficiently small $\varepsilon$. This completes the proof.
    \end{proof}

\section{Appendix}
 Various collision kernels are available in the literature.  \cite{9,milana}, that also satisfy \eqref{reversibility} and  \eqref{boundednessofB} by choosing certain functions $\Phi$ and $\Psi$ satisfying \eqref{symmetry}. 
\begin{enumerate}
\item The total energy model
\begin{equation*}\label{model3}
	\mathcal{B}\left(v, v_{*}, I, I_{*}, r, R,\sigma\right)=\left(\frac{1}{4}|v-v_*|^2+I+I_*\right)^{\frac{\gamma}{2}},
\end{equation*}
which is equivalent to the model \eqref{eqm} and obtained with the choice
	$$\Phi_{\gamma}(r,R)=\frac{1}{2^{\frac{\gamma}{2}+1}}, \qquad \Psi_{\gamma}(r,R)=1.$$
	\item kinetic and microscopic internal energy detached model
	\begin{equation*}\label{model2}
		\mathcal{B}\left(v, v_{*}, I, I_{*}, r, R,\sigma\right)=R^{\frac{\gamma}{2}}\left|v-v_{*}\right|^{\gamma}+(1-R)^{\frac{\gamma}{2}} \left({I+I_{*}}\right)^{\frac{\gamma}{2}},
	\end{equation*}
	obtained for
	$$\Phi_{\gamma}(r,R)=\min\{R,(1-R)\}^{\frac{\gamma}{2}}, \text{  and \;} \Psi_{\gamma}(r,R)=\max\{R,(1-R) \}^{\frac{\gamma}{2}}.$$
	
\item 	kinetic and particles' microscopic internal energies detached model
	\begin{equation*}\label{model1}
		\begin{aligned}
			\mathcal{B}\left(v, v_{*}, I, I_{*}, r, R,\sigma\right)=R^{\frac{\gamma}{2}}\left|v-v_{*}\right|^{\gamma}+\left(r(1-R) {I}\right)^{\frac{\gamma}{2}}+\left((1-r)(1-R) {I_{*}}\right)^{\frac{\gamma}{2}}
		\end{aligned}
	\end{equation*}
	obtained for
	$$\Phi_{\gamma}(r,R)=\min\{R,(1-R)\}^{\frac{\gamma}{2}}\min\{r,(1-r)\}^{\frac{\gamma}{2}}, \text{  and \;} \Psi_{\gamma}(r,R)=2^{1-\frac{\gamma}{2}}\max\{R,1-R \}^{\frac{\gamma}{2}}.$$
\end{enumerate}
with the following upper and lower bounds introduced in \cite{milana}
	\begin{equation}\label{boundednessofB}
	    \;{\Phi_{\gamma}}(r,R)\;\Big({|v-v_*|}^{{\gamma}}+(I+I_*)^{\frac{\gamma}{2}}\Big)\leq \bb\leq \;\Psi_{\gamma}(r,R)\Big({|v-v_*|}^{{\gamma}}+(I+I_*)^{\frac{\gamma}{2}}\Big)
	\end{equation}
where $\gamma\geq 0$. In addition, ${\Phi}_{\gamma}$ and  $\Psi_{\gamma}$ are positive functions such that
\begin{equation*}
	{\Phi}_{\gamma}\leq \Psi_{\gamma},
\end{equation*}
	and
\begin{equation}\label{symmetry}
	\Phi_{\gamma}(r,R)=\Phi_{\gamma}(1-r,R), \quad \Psi_{\gamma}(r,R) =\Psi_{\gamma}(1-r,R).
\end{equation}
\newline\newline
\noindent{\bf Acknowledgement}\newline
\noindent Seok-Bae Yun was supported by the National Research Foundation of Korea(NRF) grant funded by
the Korean government(MSIT) (RS-2023-NR076676).

\bibliographystyle{plain}
\bibliography{bibliography}

\noindent\textit{Department of Mathematics, Sungkyunkwan University, Suwon 440-746, Republic of Korea} \\
Email: \texttt{hkn4463@skku.edu}

\vspace{1em}

\noindent\textit{Department of Mechanical Engineering, Eindhoven University of Technology, Vector Building,
5631 BN Eindhoven, The Netherlands} \\
Email: \texttt{m.shahine@tue.nl}

\vspace{1em}

\noindent\textit{Department of Mathematics, Sungkyunkwan University, Suwon 440-746, Republic of Korea} \\
Email: \texttt{sbyun01@skku.edu}

\end{document}